\newcommand{\Z}{\mathbb{Z}}
\newcommand{\N}{\mathbb{N}}
\newcommand{\bN}{\mathbb{N}}
\newcommand{\Q}{\mathbb{Q}}
\newcommand{\R}{\mathbb{R}}
\newcommand{\Zp}{\mathbb{Z}_p}
\newcommand{\cC}{\mathcal{C}}
\newcommand{\cL}{\mathcal{L}}
\newcommand{\cM}{\mathcal{M}}
\newcommand{\cO}{\mathcal{O}}
\newcommand{\cP}{\mathcal{P}}
\newcommand{\cS}{\mathcal{S}}
\newcommand{\cT}{\mathcal{T}}
\newcommand{\cZ}{\mathcal{Z}}
\newcommand{\Cmodsim}{\cC/\text{\lower.5ex\hbox{$\sim$}}\,\, }
\newcommand{\modsim}{\text{\lower.5ex\hbox{$\sim$}}\,\, }
\newcommand{\fP}{\mathfrak{P}}
\newcommand{\fp}{\mathfrak{p}}
\newcommand{\wtil}[1]{\widetilde{#1}}
\newcommand{\ol}[1]{\overline{#1}}
\DeclareMathOperator{\Gal}{Gal}
\DeclareMathOperator{\Ker}{Ker}
\DeclareMathOperator{\Fitt}{Fitt}
\DeclareMathOperator{\Cl}{Cl}
\DeclareMathOperator{\ord}{ord}
\DeclareMathOperator{\Lat}{Lat}
\DeclareMathOperator{\Hom}{Hom}
\DeclareMathOperator{\Ext}{Ext}
\DeclareMathOperator{\aug}{aug}
\DeclareMathOperator{\pe}{pe}
\DeclareMathOperator{\adm}{adm}
\DeclareMathOperator{\rea}{real}
\DeclareMathOperator{\prim}{prime}
\DeclareMathOperator{\ram}{ram}
\theoremstyle{plain}
\newtheorem{thm}{Theorem}[section]
\newtheorem{lem}[thm]{Lemma}
\newtheorem{prop}[thm]{Proposition}
\newtheorem{cor}[thm]{Corollary}
\newtheorem{claim}[thm]{Claim}
\theoremstyle{definition}
\newtheorem{defn}[thm]{Definition}
\newtheorem{rem}[thm]{Remark}
\newtheorem{eg}[thm]{Example}
\title{On the Galois module structure of minus class groups}
 \author[C.~Greither]{Cornelius Greither}
 \address{Fakult\"at Informatik, Universit\"at der Bundeswehr M\"unchen,
 85577 Neubiberg, Germany}
 \email{cornelius.greither@unibw.de}
 \author[T.~Kataoka]{Takenori Kataoka}
\address{Department of Mathematics, Faculty of Science Division II, Tokyo University of Science.
1-3 Kagurazaka, Shinjuku-ku, Tokyo 162-8601, Japan}
\email{tkataoka@rs.tus.ac.jp}
 \keywords{Class groups, integral representations, lattices, syzygies}
 \subjclass[2020]{11R29 (Primary) 11R33 (Secondary)}
\begin{document}


 \begin{abstract} 
The main object of this paper is the minus class groups associated to CM-fields as Galois modules.
In a previous article of the authors, we introduced a notion of equivalence for modules and determined the equivalence classes of the minus class groups.
In this paper, we show a concrete application of this result.
We also study how large a proportion of equivalence classes can be realized as classes of minus class groups.
 \end{abstract}
	


\maketitle
   
\section{Introduction}

In some sense this paper is a continuation of a paper of the authors \cite{GK}.
The setting is given by a CM-field $L$ which
is an abelian extension of a totally real field $K$, the Galois
group being called $G$.
Let $\Z[G]^- := \Z[1/2][G]/(1 + j)$ be the minus part of the group ring, where $j$ denotes complex conjugation.
For any $\Z[G]$-module $M$, the minus part $M^-$ is defined 
as $M^- := \Z[G]^- \otimes_{\Z[G]} M$.
We consider the minus part $\Cl_L^{T, -}$ of the $T$-modified class group $\Cl_L^T$ (the technicalities will be
explained in \S \ref{ss:clgp}).

The question is, as in previous work:
How much can we say about the structure of $\Cl_L^{T, -}$
as a module over $\Z[G]^-$, based on equivariant $L$-values
and field-theoretic information (ramification and the like)
attached to $L/K$?  The Fitting ideal of the Pontryagin dual $\Cl_L^{T, -, \vee}$ had been determined,
more and more generally and unconditionally, by the first author \cite{Gr}, Kurihara \cite{Ku}, 
and Dasgupta--Kakde \cite{DK}. Somewhat unexpectedly the 
Fitting ideal of the non-dualized module $\Cl_L^{T, -}$
was determined later, in the paper
\cite{AK} by Atsuta and the second author.
In particular, as a consequence of lengthy computations, 
we obtained an inclusion relation
\[
\Fitt(\Cl_L^{T, -}) \subset \Fitt(\Cl_L^{T, -, \vee}).
\]

In \cite{GK}, we introduced a new notion of equivalence for the category $\cC$ of finite $\Z[G]^-$-modules, denoted simply by $\sim$.
Moreover, we described the equivalence class of $\Cl_L^{T, -}$ (see \S \ref{sec:1}).
A guiding principle in defining $\sim$ is to regard $G$-cohomologically trivial modules as zero.
From the view point of Fitting ideals, this implies that $\sim$ ignores the contribution of invertible ideals.
Therefore, the analytic factor coming from $L$-functions,
which was present in the earlier descriptions of the Fitting ideal,
is lost.
Nevertheless, the notion $\sim$ is useful enough.
For instance, as a first application of the theory, we have reproved the aforementioned inclusion relation.

This paper deals with two problems concerning the structure of $\Cl_L^{T, -}$ from the viewpoint of $\sim$.
The first one is to obtain a more concrete application of the notion of $\sim$.
The second is to obtain some idea how large a proportion of equivalence classes is realized 
via classes of class groups. 
It turns out that this proportion tends to be remarkably small.
In \S \ref{ss:1-1} and \S \ref{ss:1-2} respectively, we will briefly explain these results.

\subsection{A concrete application of the equivalence}\label{ss:1-1}

It is our purpose here to show that our description of $\Cl_L^{T, -}$ up to $\sim$ can actually
lead to concrete predictions concerning the structure of $\Cl_L^{T, -}$.
Of course our predictions will fall short of determining
the isomorphism class a priori -- that would be way too
ambitious.

Let $p \geq 3$ be a prime number.
We consider the case where $L^+/K$ is a $p$-extension, where as usual $L^+$ denotes the maximal totally real subfield of $L$.
In this case, $L/K$ has a unique intermediate field $F$ such that $F/K$ is a quadratic extension.
Then $F$ must be a CM-field satisfying $F^+ = K$.

The following is the main result here.
Let $\ord_p(-)$ be the additive $p$-adic valuation normalized by $\ord_p(p) = 1$.
In Lemma \ref{lem:no_root}, we will see that the $T$-modification is unnecessary because of the second assumption.

\begin{thm}\label{thm:main1}
Suppose the following:
\begin{itemize}
\item
$L^+/K$ is a cyclic $p$-extension of degree $p^r$ for some $r \geq 1$.
\item
$L$ has no non-trivial $p$-th roots of unity.
\item
There is a unique prime $v$ of $K$ that is ramified in $L^+/K$ and split in $L/L^+$.
\item
$v$ is totally ramified in $L^+/K$.
\item
$\ord_p(\# \Cl_F^-) = 0$.
\end{itemize}
Then $\ord_p(\# \Cl_L^-)$ is in the set
\[
\{r, 2r, 3r, \dots, pr \} \cup \{pr+1, pr+2, pr+3, \dots\}.
\]
In other words, $\ord_p(\# \Cl_L^-)$ is nonzero and either divisible by $r$ or larger than $pr$.
\end{thm}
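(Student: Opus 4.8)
The plan is to reduce everything to the $p$-primary part and the $p$-adic group ring, feed in the description of $\Cl_L^{-}$ up to $\sim$ from \cite{GK}, and then extract the numerical statement by a module-theoretic analysis over $R:=\Z_p[G]^{-}$.

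First I would set up the algebra. Since $L^{+}/K$ is a $p$-extension and $p$ is odd, $G\cong\Gal(L^{+}/K)\times\langle j\rangle$; writing $\bar\Delta:=\Gal(L^{+}/K)\cong\Z/p^{r}$ (which is also $\Gal(L/F)$ via restriction) we get $R=\Z_p[G]^{-}\cong\Z_p[\bar\Delta]\cong\Z_p[X]/((1+X)^{p^{r}}-1)$, a reduced local ring of dimension one with maximal ideal $(p,X)$, with $R/pR\cong\F_p[X]/(X^{p^{r}})$ uniserial, and with normalization $\widetilde R=\prod_{k=0}^{r}\Z_p[\zeta_{p^{k}}]$. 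By Lemma \ref{lem:no_root} the $T$-modification is irrelevant, so $M:=\Cl_L^{-}\otimes\Z_p$ is a finite $R$-module and $\ord_p(\#\Cl_L^{-})=\mathrm{length}_{\Z_p}(M)$. I would also pin down the local behaviour of $v$: since $v$ is totally ramified in $L^{+}/K$ and split in $L/L^{+}$, its inertia and decomposition groups in $G$ coincide and equal the unique subgroup of order $p^{r}$ (the one projecting isomorphically onto $\bar\Delta$); this is the datum that the local contribution of $v$ carries.

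Next I would invoke the main theorem of \cite{GK} to write $\Cl_L^{-}\sim N$ for an explicit $N$. The five hypotheses are exactly what makes $N$ concrete: the uniqueness of $v$ together with $\ord_p(\#\Cl_F^{-})=0$ leaves a single local contribution (the ``lower levels'' $\Cl_{L_i}^{-}$ of the tower $F=L_0\subset L_1\subset\cdots\subset L_r=L$ being $\sim$-trivial at $p$), total ramification of $v$ and its splitting in $L/L^{+}$ fix the relevant local invariants, and the absence of $p$-th roots of unity removes the $T$-twist. In parallel I would record the ambiguous class number formula for the minus part: since the minus unit group of each intermediate field is $p$-torsion-free and the only ramified prime split in the CM direction is $v$ (splitting into two places of $F$, each totally ramified in $L/F$), Chevalley's formula gives $\#(\Cl_L^{-})^{\bar\Delta}=p^{2r}/p^{r}=p^{r}$ at $p$; as $\# M^{\bar\Delta}=\# M_{\bar\Delta}$ for finite $M$, both invariants and coinvariants of $M$ have order $p^{r}$. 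In particular $M\neq 0$, so $\ord_p(\#\Cl_L^{-})\geq 1$, and $M$ needs at most $r$ generators over $R$.

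The heart of the matter is then a purely algebraic lemma: a finite $R$-module $M$ with $M\sim N$ and $\# M^{\bar\Delta}=p^{r}$ satisfies $\ord_p(\# M)\in\{r,2r,\dots,pr\}\cup\{pr+1,pr+2,\dots\}$. I would prove this via the filtration of $M$ by the powers of the augmentation ideal $I=(X)$ of $\bar\Delta$. Two inputs drive it. First, any nonzero $G$-cohomologically trivial finite $R$-module has $\Z_p$-length $\geq r+1$: its first Fitting ideal is generated by a non-unit non-zero-divisor, hence a non-unit in each of the $r+1$ factors $\Z_p[\zeta_{p^{k}}]$ of $\widetilde R$, so its norm there is divisible by $p$ and the length is a sum of $r+1$ positive integers; thus the $\sim$-ambiguity cannot alter lengths across the gap $(r,2r)$ or insert non-multiples of $r$ below the threshold. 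Second, the successive quotients $I^{s}M/I^{s+1}M$ are $\Z_p$-modules, and in the range $s<p$ the defining relation $(1+X)^{p^{r}}-1=0$ contributes only $p^{r}X\equiv 0$ (up to a unit), because $\ord_p\binom{p^{r}}{k}=r$ for $1\leq k<p$; the $\sim$-description forces these quotients to be uniform in that range, which is precisely what yields the threshold $pr$ and the divisibility by $r$, while beyond $pr$ the $p$-adic depth of $M$ may grow freely. I expect the main obstacle to be exactly this last step: identifying which modules occur in the $\sim$-class of $N$ below length $p^{pr}$, showing they are exactly those with $\ord_p=r,2r,\dots,pr$, and handling cleanly the interaction between $\sim$, the generator bound, and the constraint $\# M^{\bar\Delta}=p^{r}$.
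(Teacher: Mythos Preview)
Your setup is fine and matches the paper: the identification $R=\Z_p[G]^-\cong\Z_p[\bar\Delta]$, the removal of $T$ via Lemma~\ref{lem:no_root}, and the plan to feed in the $\sim$-description of $\Cl_L^-$ are all correct first moves. But the core of your argument has a real gap, and it is exactly where you yourself flag the difficulty.

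The paper does \emph{not} work with the constraint ``$\#M^{\bar\Delta}=p^{r}$ and $M$ has $\le r$ generators''. Instead, a genus-theory computation (Proposition~\ref{prop:cyc_gen}) shows that $({}_p\Cl_L^-)_{\bar\Delta}$ is generated by $\#S(L/K)=1$ element over $\Z_p$, hence $M$ is \emph{cyclic} over $R$. This is much sharper than your bound of $r$ generators, and it is what makes the problem tractable: one writes $M=R/J$ with $N_{\bar\Delta}\in J$, and then the $\sim$-class of $M$ (via Theorem~\ref{firstmain}) pins down $J$ up to isomorphism of lattices, namely $J\simeq(N_{\bar\Delta},p^{r})$. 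The numerical conclusion then comes from a concrete algebraic analysis (Proposition~\ref{prop:ord_alg}) of all ideals $J$ with $(N_{\bar\Delta})\subset J\subset R$ and $J\simeq(N_{\bar\Delta},p^{r})$: one writes $J=w\cdot(N_{\bar\Delta},p^{r})$, shows $\aug(w)\in\Z_p^{\times}$, decomposes $\bar R=R/(N_{\bar\Delta})\hookrightarrow\prod_{i=1}^{r}\Z_p[\mu_{p^{i}}]$, and tracks the valuations $c_i=\ord_{\Q_p(\mu_{p^{i}})}(\chi_i(p^{r}w))$ via a congruence argument between the $\chi_i$.

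Your proposed route through the $I$-adic filtration and ``$\sim$-ambiguity cannot alter lengths across the gap'' does not work as stated. The observation that every nonzero $G$-c.t.\ finite $R$-module has $\Z_p$-length $\ge r+1$ is correct, but it does not give a modular constraint on lengths: for instance $R/(1+p^{c}-\sigma)$ is $G$-c.t.\ of $\Z_p$-length exactly $c+r$ for every $c\ge1$, so c.t.\ pieces can have \emph{any} length $\ge r+1$. Thus $M\sim N$ by itself places no divisibility-by-$r$ restriction on $\mathrm{length}_{\Z_p}(M)$, and your extra input $\#M^{\bar\Delta}=p^{r}$ is not shown to repair this. Without cyclicity you have no ideal $J$ whose lattice class is determined, and the filtration heuristic (``the quotients $I^{s}M/I^{s+1}M$ are uniform for $s<p$ because $\ord_p\binom{p^{r}}{k}=r$'') is not an argument: it does not use the $\sim$-class at all and would apply to modules outside the correct length set. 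The missing idea is precisely the cyclicity of $M$ together with the lattice identification $J\simeq(N_{\bar\Delta},p^{r})$.
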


The proof will be given in \S \ref{sec:4}.
In \S \ref{ss:app_ex}, we will also check these predictions on some explicit examples.
The results indicate that the theorem may be sharp.

\subsection{Realization problem}\label{ss:1-2}

The second topic in this paper is the question: Which finite $\Z[G]^-$-modules 
can be equivalent to a class group $\Cl_L^{T, -}$ a priori?

To be more precise, let us fix an abstract finite abelian group $\Gamma$.
For simplicity, we assume that the order of $\Gamma$ is odd.
Let $\cC$ be the category of finite $\Z'[\Gamma]$-modules, where we put $\Z' = \Z[1/2]$.
In this setting we have the notion of equivalence $\sim$ on $\cC$.
Let $\Cmodsim$ be the set of equivalence classes.
It is known that $\Cmodsim$ can be regarded as a commutative monoid with 
respect to direct sums.

Let us consider various abelian CM-extensions $L/K$ 
such that $\Gal(L^+/K) \simeq \Gamma$.
Since the order of $\Gamma$ is odd, as in \S \ref{ss:1-1}, there is a unique intermediate CM-field $F$ satisfying $F^+ = K$ and $\Gal(L/F) \simeq \Gamma$.
Then we have an identification
\[
\Z[\Gal(L/K)]^- \simeq \Z'[\Gamma]
\]
induced by the inclusion $\Gamma \subset \Gal(L/K)$.
Therefore, we may talk about the class of the minus class group $\Cl_L^{T, -}$ in $\Cmodsim$.

We call an element of $\Cmodsim$ a {\it realizable} class 
if it is the class of $\Cl_L^{T, -}$ for some extension $L/K$
described above (both $L$ and $K$ vary).
Let $\cZ^{\rea} \subset \Cmodsim$ be the subset of realizable classes.
Now the question is to study the size of $\cZ^{\rea}$.

Our main results involve another subset $\cZ^{\adm}$ of $\Cmodsim$, whose elements we call {\it admissible} classes.
The definition of $\cZ^{\adm}$ will be given in \S \ref{ss:formulation}.
Here we only mention that $\cZ^{\adm}$ is defined in an algebraic way (independent from arithmetic) so that we have $\cZ^{\rea} \subset \cZ^{\adm}$.
Also, $\cZ^{\adm}$ is by definition a submonoid, while it is not clear whether so is $\cZ^{\rea}$ a priori.

Now the problem splits naturally into two sub-problems:
\begin{itemize}
\item[(a)]
Do we have $\cZ^{\rea} = \cZ^{\adm}$?
\item[(b)]
What is the monoid structure of $\cZ^{\adm}$?
\end{itemize}
Note that (a) is an arithmetic problem; to prove $\cZ^{\rea} = \cZ^{\adm}$, we have to construct suitable extensions $L/K$.
On the other hand, (b) is an algebraic problem.

As for (a), we will give the following affirmative answer, which will be proved in \S \ref{sec:3}:

\begin{thm}\label{thm:real_p}
For any finite abelian group $\Gamma$ whose order is odd,
we have $\cZ^{\rea} = \cZ^{\adm}$.
\end{thm}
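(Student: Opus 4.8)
Since the inclusion $\cZ^{\rea} \subseteq \cZ^{\adm}$ holds by the very definition of admissibility, the content of the theorem is the reverse inclusion $\cZ^{\adm} \subseteq \cZ^{\rea}$: every admissible class must be exhibited as the class of $\Cl_L^{T,-}$ for a suitable CM-extension. The plan is to feed an explicit family of fields into the description of $[\Cl_L^{T,-}]$ in $\Cmodsim$ from \cite{GK}. That description expresses this class as a sum of (i) a base term governed by $\Cl_F^{T,-}$ and (ii) for each prime $v$ of $K$ that is ramified in $L^+/K$ and split in $L/L^+$, an explicit syzygy-type module $\mathfrak{M}_{I_v}$ depending only on the inertia subgroup $I_v \subseteq \Gamma$; primes in $T$, chosen completely split, contribute $G$-cohomologically trivial modules and so vanish modulo $\sim$. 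By the algebraic definition in \S\ref{ss:formulation}, an admissible class is precisely a class of the form $\sum_j [\mathfrak{M}_{I_j}]$ for a finite multiset of subgroups $I_j$ of $\Gamma$ meeting the obvious necessary constraints (each $I_j$ cyclic, of order dividing $\#\Gamma$, and the $I_j$ jointly generating $\Gamma$). Thus it suffices to (1) make the base term trivial in $\Cmodsim$, and (2) realize each such multiset $\{I_j\}$ as the inertia data of the ramified-split primes of a genuine CM-extension with $\Gal(L^+/K) \simeq \Gamma$.

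For (1), fix once and for all an imaginary quadratic field $F_0$ with $F_0^+ = \Q$, class number one, and having no roots of unity beyond $\pm 1$ — for instance $F_0 = \Q(\sqrt{-7})$ — and take $K = \Q$ and $L = L^+ F_0$, with $L^+$ totally real abelian and $\Gal(L^+/\Q) \simeq \Gamma$. Then $L$ is a CM-field with maximal real subfield $L^+$ and unique quadratic subfield $F = F_0$, and since $\Cl_{F_0} = 1$ and $\cO_{F_0}^\times = \{\pm 1\}$, choosing $T$ to consist of rational primes completely split in $L/\Q$ makes $[\Cl_F^{T,-}] = [\Cl_{F_0}^{T,-}] = 0$ in $\Cmodsim$.

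For (2), given a multiset $I_1, \dots, I_s$ of subgroups of $\Gamma$ as above, I would use Dirichlet's theorem to pick distinct rational primes $\ell_1, \dots, \ell_s$, each coprime to $2 \cdot 7 \cdot \#\Gamma$, with $\exp(\Gamma) \mid \ell_j - 1$ and $\ell_j$ split in $F_0/\Q$, and then define $L^+$ as the subfield of $\Q(\zeta_{\ell_1 \cdots \ell_s})$ cut out by $\phi = \sum_j \phi_j \colon \prod_j \Gal(\Q(\zeta_{\ell_j})/\Q) \twoheadrightarrow \Gamma$, where $\phi_j$ is a surjection of the cyclic group $\Gal(\Q(\zeta_{\ell_j})/\Q)$ onto $I_j$ (which exists since $|I_j| \mid \exp(\Gamma) \mid \ell_j - 1$ and $I_j$ is cyclic, and $\phi$ is onto because the $I_j$ generate $\Gamma$). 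Because $\#\Gamma$ is odd, $\phi$ kills the product of the local complex conjugations, so $L^+$ is totally real with $\Gal(L^+/\Q) \simeq \Gamma$; moreover $\ell_j$ is totally ramified in $\Q(\zeta_{\ell_j})$ and unramified in the other cyclotomic factors, so its inertia in $L^+$ is exactly $\phi(\Gal(\Q(\zeta_{\ell_j})/\Q)) = I_j$. Since $L^+ \subseteq \Q(\zeta_{\ell_1 \cdots \ell_s})$, the only primes ramified in $L^+/\Q$ are the $\ell_j$, and the condition that $\ell_j$ split in $F_0$ forces each prime of $L^+$ above $\ell_j$ to split in $L/L^+$; hence the ramified-split primes of $L/K$ are exactly the $\ell_j$, with inertia $I_j$, and the \cite{GK} formula gives $[\Cl_L^{T,-}] = \sum_j [\mathfrak{M}_{I_j}]$, the prescribed admissible class. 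Sums of admissible classes are realized automatically by stacking more primes of a given inertia type into the same construction.

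The main work lies in steps whose routine appearance is deceptive: one must check that the algebraic conditions defining $\cZ^{\adm}$ in \S\ref{ss:formulation} are no more restrictive than ``$I_j$ cyclic, dividing $\#\Gamma$, and generating $\Gamma$'' — it is exactly here that the hypothesis that $\#\Gamma$ is odd is used, since it guarantees both that the cyclotomic construction produces a \emph{real} field and that there is no wild ramification to control — and one must confirm the arithmetic hygiene of the constructed $L$: that no prime outside the $\ell_j$ ramifies in $L^+/\Q$, that $L = L^+(\sqrt{-7})$ contains no roots of unity that would interfere with the $T$-modification, and that the $T$-primes and the base field $F_0$ contribute trivially modulo $\sim$. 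Granting the \cite{GK} description and these verifications, one obtains $\cZ^{\adm} \subseteq \cZ^{\rea}$, hence $\cZ^{\rea} = \cZ^{\adm}$.
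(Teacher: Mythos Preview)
Your proposal has two genuine gaps, both stemming from a misreading of the definition of $\cZ^{\adm}$ in \S\ref{ss:formulation}.

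First, the generators of $\cZ^{\adm}$ are the classes $[\omega^1(A_{I,\varphi})]$ for pairs $(I,\varphi)\in\wtil{\cS}$, where $A_{I,\varphi}=\Z'[\Gamma/I]/(1-\varphi^{-1}+\#I)$ depends on the Frobenius element $\varphi\in\Gamma/I$ as well as on $I$. Your modules $\mathfrak{M}_{I_j}$ carry no Frobenius data, and your cyclotomic construction does not control the Frobenius of $\ell_j$ in $L^+/\Q$ (it is whatever element of $\Gamma/I_j$ arises from the action of $\ell_j$ in the other cyclotomic factors). The paper's Theorem \ref{thm:realize} must realize a prescribed family $(I_i,\varphi_i)$, and for this the paper uses local class field theory to build local extensions with the correct inertia \emph{and} Frobenius, then globalizes via Grunwald--Wang.

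Second, the condition on $I$ is not ``cyclic'' but ``elementary'' (a $p$-group times a cyclic group of order prime to $p$). When $\Gamma$ is not cyclic --- say $\Gamma=(\Z/p)^2$ --- the subgroup $I=\Gamma$ itself lies in $\wtil{\cS}$ but is not cyclic, so it cannot occur as an inertia group of a tamely ramified prime over $\Q$. This is exactly why the paper allows $K$ to vary: in Step~1 of the proof of Theorem \ref{thm:realize} one chooses $K$ so that at a $p_i$-adic prime $v_i$ the local unit group $\cO_{K_{v_i}}^\times$ surjects onto $I_i$, which forces $[K:\Q]>1$ when $I_i$ is a non-cyclic $p$-group. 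Remark \ref{rem:K_recipe} makes this explicit: $K=\Q$ works if and only if $\Gamma$ is cyclic. Your approach is therefore essentially the paper's argument specialized to cyclic $\Gamma$, but it does not cover the general case.

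A smaller point: the constraint ``the $I_j$ jointly generate $\Gamma$'' is not part of the definition of $\cZ^{\adm}$ and is not satisfied in general (e.g.\ a single pair $(I,\varphi)$ with $I$ proper). Your construction needs it only to force $\Gal(L^+/\Q)\simeq\Gamma$; the paper avoids this by using Grunwald--Wang, which produces the correct global Galois group regardless of whether the local inertia groups generate it.
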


In particular, $\cZ^{\rea}$ is a submonoid of $\Cmodsim$.
Note that we will moreover have a concrete condition on the base field $K$ to realize each admissible class.
In particular, if $\Gamma$ is a cyclic group, every admissible class is realized by $K = \Q$ (see Remark \ref{rem:K_recipe}).

As for (b), we need to introduce a finite set $\cT$, which depends only on the group structure of $\Gamma$.
When $\Gamma$ is a $p$-group, the set $\cT$ is identified with the set of pairs $(I, D)$ such that 
\begin{itemize}
\item
$I \subset D \subset \Gamma$ are subgroups,
\item
$I$ is non-trivial, and 
\item
$D/I$ is cyclic.
\end{itemize}
The general definition will be given in Definition \ref{defn:sets}.
The main result for (b) is the following:

\begin{thm}\label{thm:main}
The following hold:
\begin{itemize}
\item[(1)]
Suppose $\Gamma$ is cyclic or is a $p$-group for some prime number $p$.
Then $\cZ^{\adm}$ is a free monoid of rank $\# \cT$.
\item[(2)]
Otherwise, $\cZ^{\adm}$ is not a free monoid.
\end{itemize}
\end{thm}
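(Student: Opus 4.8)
The plan is to make $\cZ^{\adm}$ completely explicit as a finitely presented commutative monoid and then to analyse its relations. Unwinding the definition of $\cZ^{\adm}$ from \S\ref{ss:formulation} together with the description of $\cT$ from Definition \ref{defn:sets}, one first shows that $\cZ^{\adm}$ is generated, as a submonoid of $\Cmodsim$, by a family of nonzero classes $\{z_t\}_{t\in\cT}$: when $\Gamma$ is a $p$-group and $t=(I,D)$, the class $z_t$ is represented by an explicit finite $\Z'[\Gamma]$-module $M_{I,D}$ built from $\Z'[\Gamma/I]$, $\Z'[\Gamma/D]$ and a syzygy, modelling the contribution to $\Cl_L^{T,-}$ of a prime whose inertia and decomposition groups are $I$ and $D$; for a general $\Gamma$ of odd order one reduces to the primary case. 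This gives a surjection of commutative monoids $\pi\colon\N^{\cT}\twoheadrightarrow\cZ^{\adm}$, $e_t\mapsto z_t$. Since $\Cmodsim$ is cancellative --- the Krull--Schmidt property holds for finite $\Z'[\Gamma]$-modules because each $\Z_p[\Gamma]$ is semiperfect --- and reduced, every irreducible element of $\cZ^{\adm}$ already lies in $\{z_t\}_{t\in\cT}$; consequently $\cZ^{\adm}$ is free of rank $\#\cT$ if and only if the $z_t$ are $\N$-linearly independent, and in order to prove that $\cZ^{\adm}$ is not free it is enough to exhibit a single element with two distinct factorisations into irreducibles.

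For part (1) I would establish the $\N$-linear independence of the $z_t$ by producing a family of monoid homomorphisms $\lambda_s\colon\Cmodsim\to\N$, $s\in\cT$, for which the $\#\cT\times\#\cT$ matrix $\bigl(\lambda_s(z_t)\bigr)_{s,t}$ is nonsingular over $\Q$. The $\lambda_s$ are to be built from $p$-adic lengths of Tate cohomology groups $\widehat{H}^i(H,M)$ over subgroups $H\le\Gamma$, or equivalently from multiplicities of composition factors of the localisations of $M$ at the primes of $\Z'[\Gamma]$; being additive and vanishing on cohomologically trivial modules, these quantities descend to $\Cmodsim$. One then linearly orders $\cT$ refining the partial order $(I,D)\le(I',D')\iff(I\subseteq I'\text{ and }D\subseteq D')$ and shows that, when $\Gamma$ is cyclic or a $p$-group, the $\lambda_s$ may be chosen so that $\lambda_s(z_t)=0$ for $s\not\le t$ while $\lambda_s(z_s)\neq0$; then $\bigl(\lambda_s(z_t)\bigr)$ is triangular with nonzero diagonal and we are done. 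This is the step where the hypothesis on $\Gamma$ is used: for a cyclic group every subquotient is cyclic and the subgroup lattice is distributive, while for a $p$-group the relevant local rings are the $\Z_p[\Gamma]$ whose module theory can be analysed directly --- and both features degrade for a non-cyclic $\Gamma$ with at least two prime divisors.

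For part (2) assume $\Gamma$ is neither cyclic nor a $p$-group and write $\Gamma=P\times\Gamma'$ with $P$ a non-trivial primary factor and $\Gamma'\neq1$, arranging that $P$ or some primary factor of $\Gamma'$ is non-cyclic. The key point is that a prime $v$ whose inertia group $I=I_P\times I'$ is non-trivial in more than one primary direction contributes a module $M_{I,D}$ which, after localisation at each rational prime, decomposes as an external tensor product in which only a single primary direction survives. Transporting this back to $\Cmodsim$, one obtains an identity of the shape $z_{(I,D)}+(\text{some }z_t)=z_{(I_P\times1,\,D_0)}+z_{(1\times I',\,D_1)}+(\text{some }z_t)$ with appropriate $D_0,D_1$; choosing $I,D$ and the accompanying data so that, after deleting common terms, the two sides still involve different multisets of generators yields a non-unique factorisation in $\cZ^{\adm}$, which is impossible in a free monoid. (When the non-cyclic factor forces an auxiliary locally free class group of a group ring to be non-trivial, the same mechanism can instead be packaged as an isomorphism $M\oplus M\cong N\oplus N$ with $M\not\cong N$ between building blocks, which also obstructs freeness.)

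The principal obstacle lies in part (1): one must actually compute the chosen homological invariants of each explicit module $M_{I,D}$ and verify the vanishing pattern $\lambda_s(z_t)=0$ for $s\not\le t$ together with $\lambda_s(z_s)\neq0$, and it is exactly here that ``$\Gamma$ cyclic or a $p$-group'' enters in an essential way. In part (2) the delicate part is to control the correction terms in the decomposition of a mixed local contribution precisely enough to be certain the resulting relation genuinely breaks unique factorisation rather than merely re-expressing one generator through the others.
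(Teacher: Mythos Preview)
There is a genuine gap in your framing. You assert as a first step a surjection $\pi\colon\N^{\cT}\twoheadrightarrow\cZ^{\adm}$, i.e.\ that $\cZ^{\adm}$ is generated by a family indexed by $\cT$. But the generators supplied by Definition~\ref{defn:Zadm} are the classes $[\omega^1(A_{I,\varphi})]$ for $(I,\varphi)\in\wtil{\cS}$, which after some work reduce to being indexed by the set $\cS$ of pairs $(I,D)$ of subgroups. The set $\cT$ plays the opposite role: it indexes the local pieces $\cO_\chi[\Gamma_p]$ of the group ring, and the paper constructs an \emph{embedding} $\cZ^{\adm}\hookrightarrow\N^{\cT}$ (Theorem~\ref{thm:beta}) by localizing, passing to lattices via $\Phi$, and invoking Krull--Schmidt over each local ring. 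So the correct picture is $\N^{\cS}\twoheadrightarrow\cZ^{\adm}\hookrightarrow\N^{\cT}$, the reverse of what you write. In the situation of part~(2) your surjection is in fact impossible: the paper shows (Proposition~\ref{prop:irred}) that the irreducible elements of $\cZ^{\adm}$ are exactly the images of a subset $\cS'\subset\cS$, all distinct, and that $\#\cS'>\#\cT$ whenever $\Gamma$ is neither cyclic nor a $p$-group (Proposition~\ref{prop:card_comp}). Since every generating set contains all irreducibles, $\cZ^{\adm}$ cannot be generated by only $\#\cT$ elements there.

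This counting is also how the paper handles part~(2), and it explains why your direct approach is incomplete. The decomposition you sketch --- splitting a mixed-prime inertia contribution into its primary pieces --- is essentially Lemma~\ref{lem:I_dec}: for $D$ cyclic, $\beta((I,D))=\sum_{p\mid\#I}\beta((I_{(p)},D))$. But this only shows that elements of $\cS\setminus\cS'$ give \emph{reducible} classes; it does not by itself produce two distinct factorizations of one element into irreducibles, and you would still need to identify the irreducibles. The paper never writes down an explicit non-unique factorization; it simply observes that a free submonoid of $\N^{\cT}$ has at most $\#\cT$ irreducibles, contradicting $\#\cS'>\#\cT$. Your idea for part~(1), separating the generators via Tate-cohomological invariants $\lambda_s$, is close in spirit to one direction of Proposition~\ref{prop:equiv_cri}; but to complete the argument the paper also needs the converse direction, obtained by explicitly computing the lattice $\Phi(\omega^{-1}(A_{I,\varphi}))$ (Propositions~\ref{prop:L_explicit} and~\ref{prop:indec}), together with a separate combinatorial injectivity check in the cyclic case (Proposition~\ref{prop:cyc_inj}).
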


Let us focus on $p$-groups.
As an immediate corollary of Theorems \ref{thm:real_p} and \ref{thm:main}, we obtain the following:

\begin{cor}\label{cor:main2}
Suppose that $\Gamma$ is a $p$-group for some prime number $p$.
Then the subset $\cZ^{\rea}$ of $\Cmodsim$ is a commutative monoid 
that is free of rank $\# \cT$.
\end{cor}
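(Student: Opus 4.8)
The plan is to obtain the statement by simply combining Theorems \ref{thm:real_p} and \ref{thm:main}(1); consequently the proof is extremely short, all the substantive content residing in those two results.

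First I would apply Theorem \ref{thm:real_p}. We are working within the standing setting of \S\ref{ss:1-2}, so $\Gamma$ has odd order and the hypothesis of Theorem \ref{thm:real_p} is satisfied; hence $\cZ^{\rea} = \cZ^{\adm}$ as subsets of $\Cmodsim$. Since $\cZ^{\adm}$ is, by its very definition, a submonoid of $\Cmodsim$ under direct sums — it contains the class of the zero module and is closed under $\oplus$ — this equality transports that structure to $\cZ^{\rea}$, which is therefore a commutative submonoid of $\Cmodsim$.

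Next, since $\Gamma$ is a $p$-group, case (1) of Theorem \ref{thm:main} applies and shows that $\cZ^{\adm}$ is a free monoid of rank $\# \cT$. Feeding back the identification $\cZ^{\rea} = \cZ^{\adm}$ from the previous paragraph, we conclude that $\cZ^{\rea}$ is free of rank $\# \cT$, which is the assertion of the corollary.

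There is essentially no obstacle here beyond checking that the two theorems speak about literally the same subset $\cZ^{\adm} \subset \Cmodsim$: Theorem \ref{thm:real_p} identifies $\cZ^{\rea}$ with it and Theorem \ref{thm:main} describes its monoid structure, so no compatibility verification is needed. (When $p = 2$ the odd-order convention of \S\ref{ss:1-2} forces $\Gamma$ to be trivial, whence $\cT = \emptyset$ and both monoids are trivial, so that degenerate case is consistent as well.)
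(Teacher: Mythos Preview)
Your proof is correct and matches the paper's own treatment exactly: the paper simply states that Corollary~\ref{cor:main2} is an immediate consequence of Theorems~\ref{thm:real_p} and~\ref{thm:main}, without writing out any further argument. Your parenthetical handling of the degenerate $p=2$ case is a harmless addition not present in the paper.
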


Here is a brief discussion on the relative size of $\cZ^{\rea}$ within $\Cmodsim$
when $\Gamma$ is a non-trivial $p$-group for some prime number $p$.
Note that the structure of $\Cmodsim$ is already discussed in \cite{GK}.

\begin{itemize}
\item
If $\Gamma$ is of order $p$,
then both $\cZ^{\rea}$ and $\Cmodsim$ are free of rank one.
Indeed, we may even prove $\cZ^{\rea} = \Cmodsim$, that is, all finite $\Gamma$-modules up to equivalence occur as minus class groups (see Theorem \ref{thm:p_real}).
\item
If $\Gamma$ is cyclic  of order $p^2$, in \cite{GK} we have shown that $\Cmodsim$ is not a free monoid and the rank of the abelian group associated to $\Cmodsim$ is $4p-2$.
This results from the Heller--Reiner classification on the $\Gamma$-lattices, given in \cite{CR}.
On the other hand, it is easy to see that $\# \cT = 3$.
Therefore, $\cZ^{\rea}$ is much smaller than $\Cmodsim$ just as $3$ is smaller than $4p-2$.
\item
For any other $\Gamma$, the classification of $\Gamma$-lattices is a deep result (see Heller and Reiner \cite{HR1}, \cite{HR2}).
In particular, it is known that
the rank of the abelian group associated to $\Cmodsim$ is infinite.
On the other hand, $\# \cT$ is of course always finite.
Therefore, $\cZ^{\rea}$ is again much smaller than $\Cmodsim$.
\end{itemize}

So only a small portion of equivalence classes are realized as minus class groups.
It is interesting to observe that the situation 
around plus components is totally in contrast (see Remark \ref{rem:plus}).

\subsection{Organization of this paper}

We begin by reviewing results from \cite{GK} in \S \ref{sec:1}.
We obtain a description of the equivalence classes of minus class groups, and then introduce the notion of realizable and admissible classes.

In \S \ref{sec:4}, we prove Theorem \ref{thm:main1} and check numerical examples.
In \S \ref{sec:3}, we prove Theorem \ref{thm:real_p}.
In \S \S \ref{sec:6}--\ref{sec:7}, we prove Theorem \ref{thm:main}.

\section{Review of the equivalence relation}\label{sec:1}

We begin with a review of the notion of equivalence introduced 
by the authors \cite{GK}.

In \S\S \ref{ss:equiv}--\ref{ss:shift}, we recall the equivalence relation $\sim$, the re-interpretation of $\sim$ in terms of lattices, and the notion of shift.
The theory of shifts is basically known from 
work of the second author \cite{Ka}, but we
add a new aspect, linking it with Heller's loop operator for lattices.

After fixing the arithmetic setup in \S \ref{ss:clgp}, we obtain the description of the equivalence class of minus class groups in \S \ref{ss:eq_clgp}.
In \S \ref{ss:formulation}, we introduce the notion of realizable classes and admissible classes.

\subsection{The equivalence relation}\label{ss:equiv}

Let $R$ be a commutative ring that is Gorenstein of Krull dimension one.
Typical examples include finite group rings such as $\Z[G]^-$ and $\Z'[\Gamma]$, where
\begin{itemize}
\item
$G$ is a finite abelian group and $(-)^-$ is considered 
with respect to a fixed element $j \in G$ whose exact order is $2$.
\item
$\Gamma$ is a finite abelian group.
\end{itemize}
Note that in \cite{GK} we established the general theory for Gorenstein 
rings of finite Krull dimension, but in this paper we only need dimension one cases.

Let $\mathcal C$ be the category of finitely generated torsion $R$-modules.
Let us write $\cP$ for the subcategory of $\cC$ that consists of 
modules whose projective dimensions over $R$ are at most one.
Note that when $R = \Z[G]^-$ or $R = \Z'[\Gamma]$ as above, $\cC$ consists of all finite $R$-modules and $\cP$ consists of all finite $R$-modules that are $G$-c.t.~or $\Gamma$-c.t., respectively (``c.t.'' is an abbreviation of ``cohomologically trivial'').

\begin{defn}
We define a relation $\sim$ on $\mathcal C$ 
as follows:
\begin{itemize}
\item[(a)] A {\it sandwich\/} is a module $M$ in $\cC$ with
a three-step filtration by submodules $0 \subset M' \subset M'' \subset M$
satisfying the following conditions:
\begin{itemize}
\item
The top quotient $M/M''$ and the
bottom quotient $M'/0=M'$ are both in $\cP$.
\item
The middle filtration quotient $M''/M'$ is in $\cC$.
\end{itemize}
The middle filtration quotient $M''/M'$
is called the {\it filling\/} of the sandwich. 

\item[(b)] Two modules $X$ and $Y$ in $\cC$ are 
{\it equivalent ($X\sim Y$),} if $X$ is the filling
of some sandwich $M$, $Y$ is the filling of some other sandwich $N$, and
$M$ and $N$ are isomorphic as $R$-modules. The isomorphism between 
$M$ and $N$ is not assumed to respect the filtrations.
\end{itemize}
\end{defn}

For basic properties of this notion and
in particular the nontrivial fact that
this is an equivalence relation we refer to \cite[Remark 2.4, Propositions 2.5--2.6]{GK}.
For now let us just
remark that one easily shows: $M \sim 0$ is equivalent to $M \in \cP$.
Indeed, the definition of $\sim$ arose from the idea of forcing that property.

The set of equivalence classes $\Cmodsim$ is equipped with a commutative monoid structure with respect to direct sums.
In the following, $\Cmodsim$ will always be studied as a commutative monoid.
For each module $M$ in $\cC$, we write $[M]$ for the equivalence class of $M$ in $\Cmodsim$.

\subsection{The equivalence classes via lattices}\label{ss:eq_lat}

In \cite[\S 4]{GK}, we established an interpretation of the equivalence relation $\sim$ via lattices.
Let us briefly review the results here.

Let $\Lat^{\pe}$ denote the commutative monoid of $R$-lattices up to projective equivalence.
Here, an $R$-lattice (which we sometimes simply call a lattice) is by definition a finitely generated torsion-free $R$-module.
Two $R$-lattices are projectively equivalent if they become isomorphic
after adding finitely generated projective $R$-modules.
We write
\[
\cL \sim_{\pe} \cL'
\]
if $\cL$ and $\cL'$ are projectively equivalent lattices.
The monoid structure of $\Lat^{\pe}$ is defined by direct sums.

\begin{defn}
We define an injective monoid homomorphism
\[
\Phi: \mathcal \Cmodsim \hookrightarrow \Lat^{\pe}
\]
as follows:
To each $X\in \mathcal C$, choose an epimorphism $F \to X$ from a finitely generated projective $R$-module $F$ to $X$ and define a lattice $\cL_X$ as its kernel.
Though $\cL_X$ depends on the choice of the epimorphism, 
it is proved in \cite[Theorem 4.2]{GK} that this induces
a well-defined map $\Phi$ that sends the class of $X$ to the class of $\cL_X$,
and that moreover
$\Phi$ is injective. 
The image of $\Phi$ is also discussed in \cite[Lemma 4.3]{GK}.
\end{defn}

\subsection{The shift operator}\label{ss:shift}

There are shift operators $\omega^n$ on the monoid $\Cmodsim$, 
see \cite[Definition 3.4]{GK} and following discussion.
($\omega^n(X)$ in this paper means $X[n]$ in \cite{GK}.)
Let us briefly define them.
\begin{defn}
For any $X\in \mathcal C$, taking a short exact sequence
\[
0 \to Y \to P \to X \to 0,
\]
where $P \in \cP$, we define $\omega^1(X) \sim Y$.
This $\omega^1$ is indeed well-defined and an automorphism.
We then define the general $\omega^n$ inductively by $\omega^{n+1} = 
\omega^1 \circ \omega^n$ for any integer $n$.
\end{defn}

On the lattice side, we have the Heller operator $\Omega$, which is an automorphism of $\Lat^{\pe}$. 
It works as follows: given a lattice $\mathcal L$,
take an exact sequence $0 \to \mathcal M \to F \to \mathcal L \to 0$,
again with $F$ finitely generated projective over $R$. 
Then $\Omega$ sends the class of $\mathcal{L}$ to the class of $\mathcal{M}$.

We can link our shift operator $\omega^1$ and the Heller operator $\Omega$.
The relation is as simple as possible.

\begin{lem} \label{commsq}
 There is a commutative square as follows:
\[
 \xymatrix{
  \Cmodsim \ar[r]^{\omega^1}_{\simeq} \ar@{^(->}[d]^{\Phi} &  \Cmodsim \ar@{^(->}[d]^\Phi \\
	\Lat^{\pe}  \ar[r]^{\Omega}_{\simeq} & \Lat^{\pe}. \\
  }
\]
\end{lem}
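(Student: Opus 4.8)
The plan is to unwind both definitions simultaneously on a single resolution and observe that they literally agree. First I would take an arbitrary module $X \in \cC$ and choose a surjection $\pi \colon F \to X$ with $F$ finitely generated projective over $R$; write $\cL_X = \Ker(\pi)$, so that by definition $\Phi([X]) = [\cL_X]$ in $\Lat^{\pe}$. The key observation is that the short exact sequence $0 \to \cL_X \to F \to X \to 0$ is \emph{simultaneously} a sequence of the type used to compute $\omega^1(X)$ and a sequence of the type used to compute $\Omega(\cL_X)$, with one caveat: in the definition of $\omega^1$ one requires the middle term to lie in $\cP$, whereas here the middle term is projective, hence certainly in $\cP$ (projective modules have projective dimension zero). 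Thus this single sequence is admissible for computing $\omega^1(X)$, and it yields $\omega^1(X) \sim \cL_X$ — but $\cL_X$ is torsion-free, so we should read this as: $\omega^1(X)$ is represented by the torsion-free module $\cL_X$, and under $\Phi$ its class maps to... here I need to be slightly careful, since $\Phi$ is defined on classes in $\Cmodsim$, not directly on lattices.

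Let me restructure. The cleanest route is: compute $\Phi(\omega^1([X]))$ and $\Omega(\Phi([X]))$ and check both equal $[\Omega(\cL_X)]$, where I now take a \emph{second} resolution $0 \to \cM \to F' \to \cL_X \to 0$ of the lattice $\cL_X$ by a finitely generated projective $F'$. On one hand, $\Omega(\Phi([X])) = \Omega([\cL_X]) = [\cM]$ directly from the definition of the Heller operator. On the other hand, since the sequence $0 \to \cL_X \to F \to X \to 0$ shows $\omega^1([X]) = [\cL_X]$ (as $F \in \cP$), and since $\Phi$ applied to the class $[\cL_X] \in \Cmodsim$... but wait, $\cL_X$ is torsion-free, so $[\cL_X]$ may not even be an object of $\cC$. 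The resolution of this is exactly \cite[Theorem 4.2]{GK}: the proof there should show that for a torsion-free module $\cL_X$ one has $[\cL_X] \sim 0$ is false in general, and $\Phi([X])$ is computed precisely by taking the \emph{kernel of a surjection onto a torsion quotient}. So the correct bookkeeping is that $\omega^1$ on $\Cmodsim$ and $\Phi$ interact so that $\Phi(\omega^1([X]))$ is computed from the same data as $\Omega$ applied to the kernel.

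Concretely, I would argue as follows. Fix the surjection $F \twoheadrightarrow X$ with kernel $\cL_X$. By definition $\omega^1([X])$ is the class of $\cL_X$ — more precisely, $\omega^1([X])$ is represented by any $Y$ fitting in $0 \to Y \to P \to X \to 0$ with $P \in \cP$, and $F$ is such a $P$. Now to compute $\Phi(\omega^1([X])) = \Phi([\cL_X])$ I choose a surjection from a projective onto a representative of the class; by \cite[Theorem 4.2]{GK} the construction of $\Phi$ is independent of choices, so I may pick the surjection $F' \twoheadrightarrow \cL_X$, whose kernel $\cM$ represents $\Phi([\cL_X])$. Hence $\Phi(\omega^1([X])) = [\cM]$. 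Comparing with $\Omega(\Phi([X])) = \Omega([\cL_X]) = [\cM]$ gives $\Phi \circ \omega^1 = \Omega \circ \Phi$ on the class of $X$, and since $X$ was arbitrary, the square commutes. The two horizontal arrows are isomorphisms by the cited results (the shift $\omega^1$ is an automorphism by \cite[Definition 3.4]{GK} and surrounding discussion, and $\Omega$ is an automorphism of $\Lat^{\pe}$), and $\Phi$ is injective by \cite[Theorem 4.2]{GK}, giving the full diagram.

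The main obstacle I anticipate is purely a matter of \emph{definitional hygiene}: $\omega^1(X)$ is only defined up to $\sim$, and $\Phi$ is a priori a map on $\Cmodsim$ whose value is computed via an auxiliary projective surjection, so one must make sure that ``$\Phi$ of the class of $\cL_X$'' genuinely means ``take a projective cover of $\cL_X$ and pass to its kernel'' — i.e. one has to check that $\Phi$ restricted to classes represented by (torsion-free) lattices still coincides with a single application of $\Omega$, rather than inadvertently performing a shift twice or zero times. This is essentially the content of \cite[Lemma 4.3]{GK} on the image of $\Phi$, so there is no new difficulty, only the need to quote the previous paper at the right granularity. Everything else — exactness, projectivity implying membership in $\cP$, well-definedness up to projective equivalence — is formal.
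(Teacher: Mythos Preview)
There is a genuine gap, and it is not merely a matter of definitional hygiene. Your central claim is that the projective module $F$ lies in $\cP$, so that the sequence $0 \to \cL_X \to F \to X \to 0$ computes $\omega^1([X])$ as $[\cL_X]$. But $\cP$ is defined as a subcategory of $\cC$, the category of finitely generated \emph{torsion} $R$-modules; a nonzero projective $R$-module is torsion-free, hence not in $\cC$, hence not in $\cP$. So $F \notin \cP$, and the sequence above is \emph{not} admissible for computing $\omega^1$. Correspondingly, $\cL_X$ is a lattice, not a torsion module, so $[\cL_X]$ is not an element of $\Cmodsim$ and the expression $\Phi([\cL_X])$ is undefined. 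You noticed this (``but wait, $\cL_X$ is torsion-free\dots'') and then waved it away by appeal to \cite[Theorem 4.2]{GK}, but that theorem only guarantees $\Phi$ is well-defined on $\Cmodsim$; it does not extend $\Phi$ to lattices.

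The paper's proof repairs exactly this. One starts instead with a genuine sequence $0 \to Y \to P \to X \to 0$ with $P \in \cP$ (so $P$ and $Y$ are torsion), and then resolves all three terms compatibly by finitely generated projectives, producing a $3 \times 3$ diagram with top row $0 \to \cL' \to \cL \to \cL'' \to 0$. Now $\cL'' \sim_{\pe} \Phi([X])$ and $\cL' \sim_{\pe} \Phi([Y]) = \Phi(\omega^1([X]))$ by the definition of $\Phi$. The point you were groping towards is that $\cL$ is \emph{projective}: this holds because $P \in \cP$ has projective dimension $\le 1$ and the middle column is a projective resolution of $P$. Hence the top row is admissible for computing $\Omega$, giving $\cL' \sim_{\pe} \Omega(\cL'') = \Omega(\Phi([X]))$. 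The extra layer of resolutions is not optional bookkeeping; it is what lets one stay inside the correct categories while still producing a projective in the middle of the lattice sequence.
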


\begin{proof}
Take $X \in \cC$ and an exact sequence $0 \to Y \to P \to
X \to 0$ with $P \in \cP$, so $Y \sim \omega^1(X)$. 
Take compatible projective resolutions
so as to obtain a diagram
\[
\xymatrix{
  0 \ar[r] & \mathcal L' \ar[r] \ar@{^(->}[d] &  \mathcal L \ar[r] \ar@{^(->}[d] 
	       & \mathcal L'' \ar[r] \ar@{^(->}[d]  & 0 \\
	0 \ar[r] & F' \ar[r] \ar@{->>}[d] &  F \ar[r] \ar@{->>}[d]  & F'' \ar[r] \ar@{->>}[d]  & 0 \\
  0 \ar[r] & Y \ar[r]  &  P \ar[r]  & X \ar[r]  & 0, \\
}
\]
in which the modules $F',F,F''$
are finitely generated projective and the upper (lower) vertical arrows are injections 
(surjections respectively). Then $\mathcal L'$ represents $\Phi(Y)$ and
$\mathcal L''$ represents $\Phi(X)$.
Moreover, $\cL$ is projective over $R$ since $P \in \cP$ and $F$ is projective.
Therefore
we also have that $\mathcal L'$ represents $\Omega(\mathcal L'')$.
These observations imply that both $\Phi(\omega^1(X))$ and $\Omega(\Phi(X))$ are represented by $\cL'$, so the lemma follows.
\end{proof}

\subsection{The arithmetic setup}\label{ss:clgp}

We review the setting, which will be in force throughout the paper.
Assume that $L$ is a CM-field which
is an abelian extension of a totally real field $K$.
Write $G$ for $\Gal(L/K)$. Note that $G$ must have
even order;  it contains a privileged element
$j$ of order 2 given by complex conjugation.

We also have to discuss $T$-modification, in order to make the results
from \cite{AK} applicable.
Let $T$ be any finite set of prime ideals of
$K$ not containing any ramified prime. 
Let $T_L$ denote the set of primes of $L$ that lie above primes in $T$.

An ideal $J$ of $L$ is called
$T$-principal, if it admits a generator $x\in L^\times_T$, where
the latter group is defined as
\[
L^\times_T = \{x\in L^\times \mid  \ord_{\fP}(x-1)>0, \ \forall \fP \in T_L\}.
\]
The $T$-modified class group $\Cl_L^T$ is then defined as the group of all
fractional ideals of $L$ having support disjoint from $T$, modulo
the subgroup of $T$-principal ideals. This is a slight enlargement
of the usual class group $\Cl_L$. More precisely, there is a canonical
surjection $\Cl^T_L \to \Cl_L$, and its kernel
is an epimorphic image of $\bigoplus_{\fP \in T_L} \kappa(\fP)^\times$, where $\kappa(\fP)$ denotes the residue field at $\fP$.

The requirement for $T$ in \cite{AK} and many other papers is the following:
In addition to the conditions already stated, $T$ must be such that
$L^{\times}_T$ is $\Z$-torsion-free.
In  other words, we must have $\mu(L) \cap L^{\times}_T = \{1\}$, where $\mu(L)$ denotes the group of roots of unity in $L$.
Trivially this implies that $T$ cannot be empty. 
However, the following lemma implies that in certain cases this modification does not matter.
Let ${}_p \Cl_L$ and ${}_p \Cl_L^T$ be the $p$-Sylow subgroups of $\Cl_L$ and $\Cl_L^T$, respectively.

\begin{lem}\label{lem:no_root}
If $L$ has no non-trivial $p$-th roots of unity, there is a legitimate choice of $T$ such that ${}_p \Cl^T_L \simeq {}_p \Cl_L$.
\end{lem}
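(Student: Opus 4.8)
The plan is to take $T$ to consist of a single auxiliary prime $v$ of $K$, chosen so that $v$ is unramified in $L/K$, its residue characteristic is prime to $p\,w_L$ (where $w_L := \#\mu(L)$), and $p$ does not divide $\#\kappa(\fP)^\times$ for any prime $\fP$ of $L$ above $v$. Granting such a $v$ and setting $T = \{v\}$, I would first check that $T$ is a legitimate choice: it is finite, nonempty, and contains no ramified prime, and $L^\times_T$ is torsion-free. The torsion-freeness holds because $\mu(L) \cap L^\times_T$ is the kernel of the reduction map $\mu(L) \to \bigoplus_{\fP \in T_L}\kappa(\fP)^\times$, and this map is injective once the residue characteristic $\ell$ of $v$ is prime to $w_L$: for a root of unity of exact order $d > 1$ the congruence $\zeta \equiv 1 \pmod{\fP}$ forces $\Phi_d(1) \in \fP$, and $\Phi_d(1)$ equals a rational prime only when $d$ is a power of that prime, so $\Phi_d(1) \notin \fP$ whenever $\ell \nmid d$ and $d \mid w_L$. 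Next, by the discussion preceding the lemma the kernel of the canonical surjection $\Cl_L^T \to \Cl_L$ is an epimorphic image of $\bigoplus_{\fP \in T_L}\kappa(\fP)^\times$, so its order divides $\prod_{\fP \mid v}\#\kappa(\fP)^\times$, which is prime to $p$ by the choice of $v$. Since a finite abelian group is the direct sum of its $p$-part and its prime-to-$p$ part and any subgroup of order prime to $p$ lies in the latter, the surjection $\Cl_L^T \to \Cl_L$ then restricts to an isomorphism ${}_p\Cl_L^T \xrightarrow{\sim} {}_p\Cl_L$, as wanted.

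The substance of the argument is the construction of $v$. I would use the following: for a prime $\fP$ of $L$ that is unramified over $\Q$ with residue characteristic $\ne p$, the cyclic group $\kappa(\fP)^\times$ has order $\#\kappa(\fP) - 1$, so $p \mid \#\kappa(\fP)^\times$ if and only if $\kappa(\fP)$ contains a primitive $p$-th root of unity, equivalently if and only if $\fP$ splits completely in the extension $L(\zeta_p)/L$. Since by hypothesis $L$ has no non-trivial $p$-th root of unity, $L(\zeta_p)/L$ is a non-trivial extension (cyclic, of degree dividing $p-1$); hence by the Chebotarev density theorem the primes of $L$ that split completely in it have density $1/[L(\zeta_p):L] < 1$, so infinitely many primes of $L$ do not split completely there. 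Discarding the finitely many primes of $L$ ramified over $\Q$ and those of residue characteristic dividing $p\,w_L$, I would fix such a prime $\fP$; then $p \nmid \#\kappa(\fP)^\times$, and $\fP$ is unramified over $\Q$ with residue characteristic prime to $p\,w_L$. Putting $v := \fP \cap K$ gives a prime of $K$ that is unramified in $L/K$ and has residue characteristic prime to $w_L$.

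To finish, I would upgrade the relation $p \nmid \#\kappa(\fP)^\times$ from the chosen $\fP$ to all primes of $L$ above $v$, using that $L/K$ is abelian: $\Gal(L/K)$ acts transitively on the primes of $L$ above $v$, so they all have the same residue degree over $v$ and hence isomorphic residue fields; this gives $p \nmid \#\kappa(\fP')^\times$ for every $\fP' \in T_L$, and the first paragraph then applies. I expect the Chebotarev step to be the main (and essentially only non-formal) point. The one subtlety worth flagging is that $L$, and hence $L(\zeta_p)$, need not be Galois over $\Q$; this is precisely why the argument is run over $L$ rather than over $\Q$ — all that is needed is a single prime of $L$ failing to split completely in $L(\zeta_p)$, and the criterion for whether $p$ divides $\#\kappa(\fP)^\times$ only involves the residue field $\kappa(\fP)$.
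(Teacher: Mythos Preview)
Your proof is correct and follows essentially the same line as the paper's: pick a single prime $\fp = \fP \cap K$ via Chebotarev so that $p \nmid \#\kappa(\fP)^\times$, and arrange that $\mu(L)$ injects into the residue field. The only difference is cosmetic --- the paper secures the injectivity of $\mu(L) \to \kappa(\fP)^\times$ by asking that $w_L \mid \#\kappa(\fP)^\times$ (which forces the residue characteristic to be prime to $w_L$), whereas you impose that residue-characteristic condition directly; your version is also more explicit about why the condition on $\kappa(\fP)^\times$ propagates to all primes of $L$ above $v$, a point the paper leaves implicit.
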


\begin{proof}
Let $f$ be the order of $\mu(L)$, which is prime to $p$ by the assumption.
By Tchebotarev's density theorem, one can find a prime $\fP$ of $L$ such that the order of $\kappa(\fP)^\times$ is
divisible by $f$ but not by $p$.
Set $T=\{\fp\}$ with $\fp=\fP \cap K$.
Then we have $\mu(L) \cap L^{\times}_T = \{1\}$ since $f \mid \#(\kappa(\fP)^\times)$.
On the other hand, the order of $\kappa(\fP)^{\times}$
 is prime to $p$, so in the $p$-part there is
no difference between $\Cl^T_L$ and $\Cl_L$.
\end{proof}

\subsection{The equivalence classes of class groups}\label{ss:eq_clgp}

When one takes \cite{AK} and \cite{GK} together,
one sees that using the notions of equivalence
and of shifting one can say a lot on $\Cl_L^{T, -}$.

 \let\phi=\varphi

We need a little more notation. Let $v$ run through the finite primes of $K$
ramifying in $L$. For each such $v$, let $I_v\subset G$ be the inertia group
and $\phi_v \in G/I_v$ the Frobenius at $v$. Define
\[
g_v = 1- \phi_v^{-1} + \# I_v \in \Z[G/I_v]; \quad  A_v = \Z[G/I_v]/(g_v),
\]
where $\# I_v$ denotes the order of $I_v$.

We will work over the ring $\Z[G]^-$ and define $\cC$ and $\cP$ accordingly.
Then \cite[Proposition 3.6]{AK} shows
the existence of a short exact sequence of $\Z[G]^-$-modules
\[
0 \to \Cl_L^{T, -} \to P \to \bigoplus_v A_v^- \to 0,
\]
 where $P$ is a $G$-c.t.~module (i.e., in $\cP$). Given the good
behaviour of shift under equivalence, this gives the following basic result:

\begin{thm}\label{thm:arith_main}
With all the notation introduced so far, we have
\[
\Cl_L^{T, -} \sim \bigoplus_v \omega^1 (A_v^-),
\]
where $v$ runs over the finite primes of $K$ that are ramified in $L$.
\end{thm}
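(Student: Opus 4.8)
The plan is to read the statement off directly from the exact sequence furnished by \cite[Proposition 3.6]{AK} together with the definition of the shift operator $\omega^1$ recalled in \S\ref{ss:shift}. First I would note that $\bigoplus_v A_v^-$ is an object of $\cC$: each $A_v = \Z[G/I_v]/(g_v)$ is a finite abelian group, since every $\C$-valued character $\chi$ of $G/I_v$ satisfies $\mathrm{Re}(\chi(g_v)) = 1 - \mathrm{Re}(\chi(\phi_v)^{-1}) + \# I_v \geq \# I_v > 0$, so $g_v$ is a non-zero-divisor in $\Q[G/I_v]$; hence the minus part $A_v^-$ is a finite $\Z[G]^-$-module, and so is the (finite) direct sum. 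Thus the cited exact sequence
\[
0 \to \Cl_L^{T, -} \to P \to \bigoplus_v A_v^- \to 0
\]
with $P \in \cP$ is exactly of the shape appearing in the definition of $\omega^1$, applied to the module $X = \bigoplus_v A_v^-$. By well-definedness of $\omega^1$ (independence of the chosen resolution with a $\cP$-term), this gives $\Cl_L^{T, -} \sim \omega^1\bigl(\bigoplus_v A_v^-\bigr)$, i.e.\ $[\Cl_L^{T, -}] = \omega^1\bigl(\bigl[\bigoplus_v A_v^-\bigr]\bigr)$ in $\Cmodsim$.

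It then remains to check that $\omega^1$ is additive with respect to direct sums, so that $\omega^1\bigl(\bigl[\bigoplus_v A_v^-\bigr]\bigr) = \bigoplus_v \omega^1([A_v^-])$. This is immediate: choosing for each $v$ a short exact sequence $0 \to Y_v \to P_v \to A_v^- \to 0$ with $P_v \in \cP$ and forming the direct sum yields $0 \to \bigoplus_v Y_v \to \bigoplus_v P_v \to \bigoplus_v A_v^- \to 0$ with $\bigoplus_v P_v \in \cP$, whence $\omega^1\bigl(\bigoplus_v A_v^-\bigr) \sim \bigoplus_v Y_v \sim \bigoplus_v \omega^1(A_v^-)$. (Alternatively, one simply invokes that $\omega^1$ is a monoid automorphism of $\Cmodsim$ with respect to direct sums, as recalled in \S\ref{ss:shift}.) Combining the two displayed identities proves the theorem.

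I do not anticipate a serious obstacle here: the theorem is essentially a repackaging of \cite[Proposition 3.6]{AK} in the equivalence-theoretic language of \cite{GK}, and its genuine content --- the existence of a short exact sequence with cokernel $\bigoplus_v A_v^-$ and a $G$-cohomologically trivial middle term --- is imported as a black box. The only points requiring a line of care are the finiteness of the $A_v$ (so that $\omega^1$ is applicable at all) and the compatibility of $\omega^1$ with finite direct sums; both are routine.
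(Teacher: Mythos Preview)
Your proposal is correct and follows essentially the same approach as the paper: read off the equivalence from the exact sequence of \cite[Proposition 3.6]{AK} via the definition of $\omega^1$, then use its compatibility with direct sums. You have in fact been more careful than the paper, supplying the finiteness check for $A_v$ and the additivity of $\omega^1$ that the paper leaves implicit.
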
  \medskip

Note that the right hand side does not depend on the set $T$, and that the
variance under $T$ is hidden in the equivariant $L$-value, which is
not a part of the statement here. Nevertheless, to keep things technically
correct, one has to leave $T$ in at least formally.

\begin{lem}\label{lem:A_ct}
If $v$ is ramified or inert in $L/L^+$, then $A_v^-$ is in $\cP$.
\end{lem}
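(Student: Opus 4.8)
The plan is to analyze the module $A_v^- = (\Z[G/I_v]/(g_v))^-$ directly, using the structure of the element $g_v = 1 - \phi_v^{-1} + \# I_v$. First I would observe that for $\Z[G]^-$-modules, being in $\cP$ is equivalent to being $G$-c.t., and since $\Z[G]^-$ is a product of local rings after completion at primes, it suffices to check cohomological triviality prime-by-prime; equivalently, one can check that $A_v^-$ has projective dimension at most one over $\Z[G]^-$. The natural approach is to exhibit $A_v^-$ as the cokernel of an injective map between projective $\Z[G]^-$-modules, i.e. to show that multiplication by $g_v$ on $\Z[G/I_v]^-$ is injective; then the tautological sequence
\[
0 \to \Z[G/I_v]^- \xrightarrow{g_v} \Z[G/I_v]^- \to A_v^- \to 0
\]
is a length-one projective resolution, provided $\Z[G/I_v]^-$ is itself projective over $\Z[G]^-$.

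The key step is thus twofold. First, when $v$ is \emph{ramified} in $L/L^+$, the inertia group $I_v$ contains $j$, so $\# I_v$ is even and $1 + j$ acts as zero on the minus part while $I_v$ surjects onto $\{1, j\}$; here $\Z[G/I_v]^- = \Z[1/2][G/I_v]/(1+\bar j)$ involves a quotient group $G/I_v$ in which the image of $j$ is trivial, and one must check this module is projective (equivalently c.t.) over $\Z[G]^-$ — this follows because $I_v \ni j$ makes $\Z[G/I_v]^-$ a quotient of $\Z[G]^-$ by an idempotent-generated-after-localization ideal, hence c.t. Actually the cleanest route: since $j \in I_v$, the module $A_v^-$ is a module over $\Z[G/I_v]^- = 0$ when $j$ generates $I_v$... so more carefully, $\Z[G/I_v]^-$ is the minus part relative to the image $\bar j$ of $j$, and since $j \in I_v$ we have $\bar j = 1$, so $(1 + \bar j) = 2$ is a unit and $\Z[G/I_v]^- = 0$, whence $A_v^- = 0 \in \cP$ trivially. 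Second, when $v$ is \emph{inert} (unramified with trivial decomposition-over-inertia... ) — here I mean $v$ inert in $L/L^+$, so $I_v$ is trivial but the Frobenius $\phi_v$ maps to $j \in G$; then $\# I_v = 1$ and $g_v = 1 - \phi_v^{-1} + 1 = 2 - j$ in $\Z[G]$, so $g_v^- = 2 - (-1) = 3$ acting on $\Z[G]^-$ — wait, one must be careful whether $\phi_v = j$ exactly or $\phi_v^{-1} \equiv j$; in any case $g_v^-$ becomes a nonzerodivisor (indeed a nonzero integer like $3$) on the integral domain factors, hence multiplication by it is injective and $A_v^-$ has projective dimension $\le 1$.

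The main obstacle I anticipate is bookkeeping the precise image of $\phi_v$ and $I_v$ in $G$ and in $G/I_v$, and correctly interpreting ``inert in $L/L^+$'': this should mean the prime of $L^+$ below $v$'s extension is inert in the quadratic extension $L/L^+$, i.e. the decomposition group of $v$ in $\Gal(L/L^+) = \langle j \rangle$ is all of $\langle j \rangle$ while inertia is trivial, so $j \in \langle \phi_v \rangle \bmod I_v$. In the ramified case, inertia in $L/L^+$ is nontrivial, i.e. $j \in I_v$. Once these translations are pinned down, the verification that $g_v^-$ acts injectively (or that $A_v^- = 0$) reduces to noting that in each case $g_v^-$ is sent to a nonzero rational integer under every character of $\Z[G]^-$ of the relevant type, or that the module vanishes outright; I would close by invoking the standard fact that a finite $\Z[G]^-$-module admitting a two-term projective resolution lies in $\cP$.
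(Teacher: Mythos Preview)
Your treatment of the ramified case is correct: if $j \in I_v$ then $j$ becomes trivial in $G/I_v$, so $1+j$ acts as $2$ on $\Z[G/I_v]$ and $\Z[G/I_v]^- = 0$, whence $A_v^- = 0$.

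The inert case, however, contains a genuine gap. You assert that ``$v$ inert in $L/L^+$'' forces $I_v$ to be trivial, but $I_v$ is the inertia group in $G = \Gal(L/K)$, not in $\Gal(L/L^+)$. The correct translation (which you almost reach in your final paragraph) is only that $j \notin I_v$ while $j \in \langle \phi_v \rangle \subset G/I_v$; the group $I_v$ itself can be large. In particular, in the context where this lemma is applied, $v$ runs over primes \emph{ramified in $L$}, so $I_v$ is always nontrivial. Your computation $\#I_v = 1$, $g_v = 2 - \phi_v^{-1}$ is therefore not the relevant one.

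This matters because your strategy of exhibiting
\[
0 \to \Z[G/I_v]^- \xrightarrow{g_v} \Z[G/I_v]^- \to A_v^- \to 0
\]
as a projective resolution breaks down: $\Z_p[G/I_v]$ is $G$-c.t.\ only when $p \nmid \#I_v$. For an odd prime $p$ dividing $\#I_v$, the module $\Z_p[G/I_v]^-$ is \emph{not} projective over $\Z_p[G]^-$, so even if $g_v$ acts injectively you do not get projective dimension $\le 1$ this way. The paper handles exactly this residual case ($j \notin I_v$, $p \mid \#I_v$) by a different mechanism: writing $j = \phi_v^m$ in $G/I_v$, one observes that $(1+\#I_v)^m - \phi_v^{-m}$ is a multiple of $g_v$ in $\Z[G/I_v]$; in the minus part $\phi_v^{-m} = j = -1$, and since $p \mid \#I_v$ this element is a $p$-adic unit. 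Hence $g_v$ is a unit in $\Z_p[G/I_v]^-$ and $\Z_p \otimes A_v^- = 0$. You are missing this argument entirely.
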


\begin{proof}
It is enough to show that $\Z_p \otimes A_v^- = \Z_p[G/I_v]^-/(g_v)$ is $G$-c.t.~ for any odd prime $p$.
First suppose that $v$ is ramified in $L/L^+$, that is, $j \in I_v$.
Then $j$ acts as $+1$ on $\Zp[G/I_v]$, so the minus part $\Zp[G/I_v]^-$ is already trivial.
Second suppose that $p \nmid \# I_v$.
In this case, $\Z_p[G/I_v]$ is $G$-c.t., so its quotient $\Z_p[G/I_v]^-/(g_v)$ is also $G$-c.t.

Finally, suppose $v$ is inert in $L/L^+$ and $p \mid \# I_v$. 
In this case, the restriction of $j$ to $G/I_v$ is a power of the Frobenius $\varphi_v$, that is, there is a positive integer $m$ such that $j = \varphi_v^m$.
Then
\[
(1 + \# I_v)^m - (\varphi_v^{-1})^m = (1 + \# I_v)^m - j
\]
is a multiple of $g_v$ in $\Z[G/I_v]$.
Since $p \mid \# I_v$ and $j = -1$ in the minus component, the displayed element is a $p$-adic unit in $\Z_p[G/I_v]^-$.
Therefore, $g_v$ is also a unit in $\Z_p[G/I_v]^-$, so we obtain $\Z_p[G/I_v]^-/(g_v) = 0$.
\end{proof}

\begin{defn}\label{defn:SLK}
We define $S(L/K)$ as the set of finite primes of $K$ that are ramified in $L^+/K$ and split in $L/L^+$.
\end{defn}

Now Lemma \ref{lem:A_ct} implies that Theorem \ref{thm:arith_main} can be rephrased as follows:

\begin{cor}\label{cor:arith_main2}
We have
\[
\Cl_L^{T, -} 
\sim \bigoplus_{v \in S(L/K)} \omega^1 (A_v^-).
\]
\end{cor}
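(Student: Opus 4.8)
The plan is to deduce this directly from Theorem \ref{thm:arith_main}, which already gives
\[
\Cl_L^{T, -} \sim \bigoplus_v \omega^1(A_v^-)
\]
with $v$ ranging over all finite primes of $K$ ramified in $L/K$, by showing that every summand indexed by a prime $v \notin S(L/K)$ is equivalent to $0$ and can therefore be dropped. Since $\omega^1$ is an automorphism of the monoid $\Cmodsim$ it fixes the identity element, so $A_v^- \in \cP$ implies $\omega^1(A_v^-) \sim \omega^1(0) \sim 0$; and as $\sim$ is compatible with direct sums, a summand equivalent to $0$ contributes nothing to the class of the whole direct sum. Hence it suffices to prove $A_v^- \in \cP$ for every finite prime $v$ of $K$ that is ramified in $L/K$ but does not lie in $S(L/K)$.

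So I would fix such a $v$ and split into two cases according to Definition \ref{defn:SLK}: either (i) $v$ is ramified in $L^+/K$ but not split in $L/L^+$, or (ii) $v$ is not ramified in $L^+/K$. In case (i) the prime $v$ is ramified or inert in the quadratic extension $L/L^+$, so Lemma \ref{lem:A_ct} applies verbatim and $A_v^- \in \cP$.

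In case (ii) I would first observe that $v$ must in fact be ramified in $L/L^+$, and then again invoke Lemma \ref{lem:A_ct}. Indeed, the restriction map $G \to \Gal(L^+/K)$ carries the inertia group $I_v \subset G$ onto the inertia group of $v$ in $L^+/K$, which is trivial by hypothesis; hence $I_v \subseteq \Gal(L/L^+) = \langle j \rangle$. Since $v$ is ramified in $L/K$ we have $I_v \neq 1$, so $I_v = \langle j \rangle$, and in particular $j \in I_v$, which is exactly the statement that $v$ is ramified in $L/L^+$. Thus Lemma \ref{lem:A_ct} gives $A_v^- \in \cP$ in this case as well.

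Combining the two cases, $A_v^- \in \cP$ for every ramified prime $v \notin S(L/K)$, and the corollary follows. There is no serious obstacle here, since everything is reduced to Lemma \ref{lem:A_ct}; the only point needing a moment's care is the elementary ramification-theoretic remark in case (ii) that a prime ramified in $L/K$ but unramified in the totally real subfield $L^+$ is automatically ramified in $L/L^+$, which uses only $[L:L^+] = 2$ and the fact that $G$ is abelian (so that $I_v$ is well defined independently of the chosen prime above $v$).
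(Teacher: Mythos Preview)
Your proof is correct and takes essentially the same approach as the paper, which simply states that Lemma~\ref{lem:A_ct} allows Theorem~\ref{thm:arith_main} to be rephrased in this form. You have supplied the details the paper omits---in particular the elementary observation in case~(ii) that a prime ramified in $L/K$ but unramified in $L^+/K$ must be ramified in $L/L^+$---which is exactly what is needed to bring every discarded summand under the scope of Lemma~\ref{lem:A_ct}.
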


\subsection{Realizable classes and admissible classes}\label{ss:formulation}

Let us fix a finite abelian group $\Gamma$ whose order is odd.
We study the category $\cC$ over $\Z'[\Gamma]$.
As explained in the introduction,
we define the set of realizable classes as follows:

\begin{defn}
We say that an element of $\Cmodsim$ is realizable if it is the class of $\Cl_L^{T, -}$ for some extension $L/K$ such that $\Gal(L^+/K) \simeq \Gamma$, where we identify $\Z[\Gal(L/K)]^-$ with $\Z'[\Gamma]$.
The set of realizable classes is denoted by $\cZ^{\rea} \subset \Cmodsim$.
\end{defn}

Next we define the submonoid of admissible classes.
The motivation for the definition will be clear in Corollary \ref{cor:real_adm}.

Let us employ a useful term from group theory.
Given a prime number $p$, a finite group is called {\it $p$-elementary} if it is the product of a $p$-group and a cyclic group.
A finite group is called {\it elementary} if it is $p$-elementary for some prime number $p$.

\begin{defn}\label{defn:Zadm}
\begin{itemize}
\item[(1)]
Associated to $\Gamma$, we define $\wtil{\cS}$ as the set of pairs $(I, \varphi)$, where
\begin{itemize}
\item
$I \subset \Gamma$ is a subgroup,
\item
$I$ is non-trivial,
\item
$I$ is an elementary group, and
\item
$\varphi \in \Gamma/I$ is an element.
\end{itemize}
\item[(2)]
For each $(I, \varphi) \in \wtil{\cS}$,
we define a finite $\Z'[\Gamma]$-module $A_{I, \varphi}$ by
\[
A_{I, \varphi} := \Z'[\Gamma/I]/(1 - \varphi^{-1} + \# I).
\]
\item[(3)]
We define a submonoid $\cZ^{\adm} \subset \Cmodsim$ of admissible classes by
\[
\cZ^{\adm} = \langle [\omega^1(A_{I, \varphi})] \mid (I, \varphi) \in \wtil{\cS} \rangle,
\]
which is generated by $[\omega^1(A_{I, \varphi})]$ for various $(I, \varphi) \in \wtil{\cS}$.
\end{itemize}
\end{defn}

\begin{cor}\label{cor:real_adm}
Let $L/K$ be an extension such that $\Gal(L^+/K) \simeq \Gamma$.
We identify $\Z'[\Gamma]$ and $\Z[\Gal(L/K)]^-$.
Then we have
\begin{equation}\label{eq:Cl_key}
\Cl_L^{T, -} 
\sim \bigoplus_{v \in S(L/K)} \omega^1 (A_{I_v, \varphi_v}).
\end{equation}
In particular, we have $\cZ^{\rea} \subset \cZ^{\adm}$.
\end{cor}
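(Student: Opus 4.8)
The plan is to deduce the corollary directly from Corollary~\ref{cor:arith_main2}; the only real content is to match each local term $A_v^-$ with one of the modules $A_{I,\varphi}$ of Definition~\ref{defn:Zadm} and to verify that the resulting pair $(I_v,\varphi_v)$ lies in $\wtil{\cS}$.

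First I would make the identification $\Z[\Gal(L/K)]^-\simeq\Z'[\Gamma]$ explicit: since $\#\Gamma$ is odd, $G:=\Gal(L/K)$ factors as $\langle j\rangle\times\Gamma'$ with $\Gamma'$ mapping isomorphically onto $\Gal(L^+/K)\simeq\Gamma$, and this is the isomorphism in question. For $v\in S(L/K)$ the decomposition group $D_v\subset G$ does not contain $j$, because $v$ splits in $L/L^+$. Since $\langle j\rangle$ has order $2$ and $\#\Gamma$ is odd, $D_v\cap\langle j\rangle=1$ forces $D_v\subseteq\Gamma'$: if $g=j\gamma\in D_v$ with $\gamma\in\Gamma'$ of (odd) order $m$, then $g^m=j\in D_v$, a contradiction. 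In particular $I_v\subseteq\Gamma'$, so under the identification $I_v$ becomes a genuine subgroup $\ol{I_v}\subseteq\Gamma$ and the Frobenius $\varphi_v\in D_v/I_v$ becomes an element $\ol{\varphi_v}\in\Gamma/\ol{I_v}$. Passing to minus parts just removes the $\langle j\rangle$-factor and inverts $2$, so
\[
A_v^- \;=\; \Z[G/I_v]^-/(g_v)\;\simeq\;\Z'[\Gamma/\ol{I_v}]\big/\bigl(1-\ol{\varphi_v}^{-1}+\#\ol{I_v}\bigr)\;=\;A_{\ol{I_v},\ol{\varphi_v}}.
\]

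Next I would check $(\ol{I_v},\ol{\varphi_v})\in\wtil{\cS}$. The group $\ol{I_v}$ is the image of $I_v$ in $\Gal(L^+/K)$, i.e.\ the inertia group of $v$ in $L^+/K$, which is non-trivial because $v$ ramifies in $L^+/K$. Moreover $\ol{I_v}\simeq I_v$ is the inertia group of $v$ in the abelian extension $L/K$, hence is the internal direct product of its wild part (a $p$-group, with $p$ the residue characteristic of $v$) and its tame part (cyclic of order prime to $p$); thus $\ol{I_v}$ is $p$-elementary, in particular elementary. Finally $\ol{\varphi_v}$ is simply an element of $\Gamma/\ol{I_v}$. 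Substituting $A_v^-\simeq A_{\ol{I_v},\ol{\varphi_v}}$ into Corollary~\ref{cor:arith_main2} gives \eqref{eq:Cl_key} (dropping the bars under the identification). For the last assertion, each $[\omega^1(A_{\ol{I_v},\ol{\varphi_v}})]$ lies in $\cZ^{\adm}$ by Definition~\ref{defn:Zadm} and $\cZ^{\adm}$ is a submonoid, so $[\Cl_L^{T,-}]\in\cZ^{\adm}$; letting $L/K$ range over all extensions with $\Gal(L^+/K)\simeq\Gamma$ yields $\cZ^{\rea}\subseteq\cZ^{\adm}$.

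The one place requiring care — though it is routine — is the bookkeeping around $\Z[G]^-\simeq\Z'[\Gamma]$: one must see that the condition $v\in S(L/K)$ is exactly what forces both the inertia subgroup $I_v$ and the Frobenius class $\varphi_v$ to have trivial component along $\langle j\rangle$, so that $A_v^-$ is literally of the form $A_{I,\varphi}$ for an honest pair over $\Gamma$ rather than only over $G$. The remaining ingredient — that inertia groups of abelian extensions of local fields are $p$-elementary — is standard, and it is precisely why the elementariness condition was built into the definition of $\wtil{\cS}$.
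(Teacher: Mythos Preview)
Your proof is correct and follows essentially the same approach as the paper's: both verify via local class field theory that $I_v$ is $p$-elementary (hence $(I_v,\varphi_v)\in\wtil{\cS}$), identify $A_v^-$ with $A_{I_v,\varphi_v}$, and then invoke Corollary~\ref{cor:arith_main2}. The paper's proof is much terser and leaves the bookkeeping around the identification $\Z[G]^-\simeq\Z'[\Gamma]$ implicit; your careful verification that $D_v\subset\Gamma'$ when $v\in S(L/K)$ makes explicit exactly what the paper takes for granted.
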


\begin{proof}
By local class field theory, for each prime $v \in S(L/K)$, the inertia group $I_v$ is $p$-elementary for the prime number $p$ lying below $v$.
Therefore, we have $(I_v, \varphi_v) \in \wtil{\cS}$ and also $A_v^- \simeq A_{I_v, \varphi_v}$.
The corollary follows immediately follows from Corollary \ref{cor:arith_main2}.
\end{proof}

Now Theorems \ref{thm:real_p} and \ref{thm:main} are formulated, except for the general definition of $\cT$.

\begin{rem}\label{rem:shift}
The following observation will be used to prove Theorem \ref{thm:main}.
For any integer $n$, the monoid $\cZ^{\adm}$ is isomorphic to the submonoid generated by $[\omega^n(A_{I, \varphi})]$ for various $(I, \varphi) \in \wtil{\cS}$.
This is because the shift automorphisms $\omega^{n-1}$ respect the monoid structure of $\Cmodsim$.
\end{rem}

\begin{rem}\label{rem:plus}
The authors are grateful to Manabu Ozaki, who provided them with the following information.
   
Let $p$ be an odd prime number.
Let $\Gamma$ be any finite $p$-group.
Then, for any finite $\Z_p[\Gamma]$-module $M$, there is a finite abelian extension $L^+/K$ of totally real fields such that $\Gal(L^+/K) \simeq \Gamma$ and $\Z_p \otimes_{\Z} \Cl_{L^+}$ is isomorphic to $M$ as $\Gamma$-modules.

This claim can be shown as follows.
For the given $\Gamma$-module $M$, we consider the semi-direct product of $M \rtimes \Gamma$.
It is a $p$-group, so we may apply the main theorem of Hajir--Maire--Ramakrishna \cite{HMR}.
As a consequence, there exists a totally real field $K$ such that the Galois group of the maximal unramified (not necessarily abelian) $p$-extension over $K$ is isomorphic to $M \rtimes \Gamma$.
Then defining $L^+$ as the intermediate field corresponding to $M$, the requirement is satisfied.
\end{rem}

\section{Concrete applications}\label{sec:4}

In this section, we prove Theorem \ref{thm:main1}.
For this, in \S \ref{ss:lat_clgp}, we compute the lattice associated to the class group explicitly when the Galois group is cyclic.
The general case is doable in principle, but it seems to be complicated.
Then the proof of Theorem \ref{thm:main1} will be given in \S \ref{ss:app_pf}.
In \S \ref{ss:app_ex}, we will also observe numerical examples, which suggest that our theoretical result may be sharp.
A direct generalization of Theorem \ref{thm:main1} will be also provided.

\subsection{Computation of the lattice}\label{ss:lat_clgp}

First we compute the lattice $\Phi(\omega^1(A_{I, \varphi}))$.

\begin{thm} \label{first_main} 
Let $\Gamma$ be a cyclic group of odd order.
Let $(I, \varphi) \in \wtil{\cS}$, which simply means that $I \subset \Gamma$ is a subgroup and $\varphi \in \Gamma/I$ is an element.
Take a lift $\wtil{\varphi} \in \Gamma$ of $\varphi$.
We consider the module $A_{I, \varphi} 
= \Z'[\Gamma/I]/(1 - \varphi^{-1} + \# I)$ over $\Z'[\Gamma]$.
Then we have 
\[
\Phi(\omega^1(A_{I, \varphi})) \sim_{\pe}
(N_I, 1- \wtil{\phi}^{-1}+ \# I),
\]
where we define the norm element $N_I = \sum_{\sigma \in I} \sigma \in \Z[I]$ and the right hand side
 is the ideal of $\Z'[\Gamma]$ generated by the two elements
inside the brackets.
\end{thm}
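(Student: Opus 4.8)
The plan is to compute $\Phi(\omega^1(A_{I,\varphi}))$ directly from the definition of $\Phi$, namely by realizing $A_{I,\varphi}$ as a quotient of a projective $\Z'[\Gamma]$-module and identifying the kernel, and then using Lemma \ref{commsq} to replace $\Phi(\omega^1(A_{I,\varphi}))$ by $\Omega(\Phi(A_{I,\varphi}))$ -- or, more directly, by observing that an extension $0 \to Y \to P \to A_{I,\varphi} \to 0$ with $P \in \cP$ feeds into the commutative diagram of the lemma. Since $\Gamma$ is cyclic (hence $\Z'[\Gamma]$ is a product of local-ish pieces and all the standard resolutions are two-periodic), I expect the lattice $\Phi(A_{I,\varphi})$ itself to be computed as the kernel of a single-generator surjection $\Z'[\Gamma] \to A_{I,\varphi}$, which is the ideal $J = \ker(\Z'[\Gamma] \twoheadrightarrow \Z'[\Gamma/I]/(1-\varphi^{-1}+\#I))$. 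One then takes $\Omega$ of the class of $J$, i.e. resolves $J$ one more step.

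First I would set up notation: fix a generator of $\Gamma$, write $\Delta = 1 - \wtil{\varphi}^{-1} + \#I \in \Z'[\Gamma]$, and note that $A_{I,\varphi} = \Z'[\Gamma]/(I_I, \Delta)$ where $I_I = (\sigma - 1 : \sigma \in I)$ is the augmentation ideal of $I$ inside $\Z'[\Gamma]$; equivalently $A_{I,\varphi} = \Z'[\Gamma]/\mathfrak a$ with $\mathfrak a = I_I + (\Delta)$. So $\Phi(A_{I,\varphi})$ is represented by $\mathfrak a$ as an ideal/lattice. Next I would compute $\Omega(\mathfrak a)$, i.e. write a surjection from a projective onto $\mathfrak a$ and take its kernel. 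Because $\Gamma$ is cyclic, the augmentation ideal $I_I$ of the subgroup $I$ fits into the familiar short exact sequence $0 \to N_I \Z'[\Gamma] \to \Z'[\Gamma] \xrightarrow{\nu} I_I \to 0$ where $\nu$ is multiplication by $\sigma_0 - 1$ for a generator $\sigma_0$ of the cyclic group $I$ (here $N_I$ is the norm of $I$; this uses crucially that $I$ is cyclic, which holds since $\Gamma$ is cyclic) -- and dually $0 \to I_I \to \Z'[\Gamma] \xrightarrow{N_I} N_I\Z'[\Gamma] \to 0$. The key computational step is to combine the two generators: present $\mathfrak a$ via $\Z'[\Gamma]^2 \xrightarrow{(x,y)\mapsto (\sigma_0-1)x + \Delta y} \mathfrak a \to 0$, compute the syzygy module, and then use projective equivalence (adding/cancelling free summands, using that over $\Z'[\Gamma]$ stably free implies free in the relevant cases, or just tracking the class in $\Lat^{\pe}$) to whittle the answer down to the two-generator ideal $(N_I, \Delta)$. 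I expect the relations to be generated by $(N_I, 0)$, $(\Delta', -( \sigma_0 - 1)')$ type elements coming from the two periodicities above, and chasing these identifies the syzygy with $(N_I, \Delta)$ up to a free summand.

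An alternative, and probably cleaner, route: use the arithmetic sequence backwards. By Theorem \ref{thm:arith_main} / the sequence $0 \to \Cl^{T,-}_L \to P \to \bigoplus A_v^- \to 0$ with $P \in \cP$, together with the definition of $\Phi$ and Lemma \ref{commsq}, the class $\Phi(\omega^1(A_{I,\varphi}))$ equals $\Omega(\Phi(A_{I,\varphi}))$, and $\Phi(A_{I,\varphi})$ is the kernel of $\Z'[\Gamma] \to A_{I,\varphi}$, which is exactly $\mathfrak a = I_I + (\Delta)$. So it suffices to prove $\Omega(\mathfrak a) \sim_{\pe} (N_I, \Delta)$. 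For this I would produce an explicit exact sequence $0 \to (N_I, \Delta) \to \Z'[\Gamma]^2 \oplus (\text{proj}) \to \mathfrak a \to 0$; the map $\Z'[\Gamma]^2 \to \mathfrak a$ sends the basis vectors to $\sigma_0 - 1$ and $\Delta$, and one checks the kernel is generated by the obvious relations plus that $N_I \cdot(\sigma_0-1) = 0$ forces $N_I$ (times the first basis vector) into the kernel and the relation $\Delta(\sigma_0-1) - (\sigma_0-1)\Delta = 0$ is trivial, so the interesting content is a relation expressing $\Delta$-divisibility. I'd verify directly that the kernel is isomorphic to the ideal $(N_I, \Delta)$, possibly after adding a free module to both sides.

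The main obstacle I anticipate is the bookkeeping in the syzygy computation: ensuring that the kernel of the two-generator presentation of $\mathfrak a$ is genuinely $(N_I, \Delta)$ and not merely projectively equivalent to something one size off, and handling the stably-free-versus-free issue over $\Z'[\Gamma]$ (for cyclic $\Gamma$ this is benign, but it must be stated). A secondary subtlety is the role of the lift $\wtil{\varphi}$: the element $\Delta = 1 - \wtil{\varphi}^{-1} + \#I$ depends on the lift, but its image modulo $I_I$, and hence $A_{I,\varphi}$, does not; I would note that changing the lift changes $(N_I,\Delta)$ by a unit-multiple adjustment that does not affect the projective-equivalence class, so the statement is well posed. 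Once these points are pinned down, the identification $\Phi(\omega^1(A_{I,\varphi})) \sim_{\pe} (N_I, 1-\wtil{\varphi}^{-1}+\#I)$ follows by assembling the two periodicity sequences for the cyclic subgroup $I$ with the one-generator presentation of $A_{I,\varphi}$.
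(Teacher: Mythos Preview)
Your proposal is correct and follows essentially the same route as the paper: reduce via Lemma~\ref{commsq} to computing $\Omega$ of the ideal $(\sigma_0-1,\Delta)$ (which equals your $\mathfrak a$ since $I$ is cyclic), present this ideal by $R^2\twoheadrightarrow(\sigma_0-1,\Delta)$, and check that the kernel is generated by $(N_I,0)$ and $(\Delta,1-\sigma_0)$, hence isomorphic via first projection to $(N_I,\Delta)$. Your anticipated obstacles largely evaporate---the syzygy computation is short (use that $\Delta$ is a non-zero-divisor to force the second coordinate into $(\sigma_0-1)$), no stably-free issue arises since the kernel is identified on the nose, and the lift $\wtil\varphi$ is fixed in the statement so well-definedness need not be argued.
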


\begin{proof}
By Lemma \ref{commsq}, we have $\Phi(\omega^1(A_{I, \varphi})) \sim_{\pe} \Omega(\Phi(A_{I, \varphi}))$.
Let $\tau$ be a generator of $I$.
Put $\wtil{g} = 1 - \wtil{\varphi}^{-1} + \# I$.
Then we have $\Phi(A_{I, \varphi}) \sim_{\pe} (\tau - 1, \wtil{g})$, so we have to compute
\[
\Phi(\omega^1(A_{I, \varphi})) \sim_{\pe} \Omega((\tau - 1, \wtil{g})).
\]

Put $R = \Z'[\Gamma]$.
Let $\rho: R^2 \to \mathcal (\tau - 1, \wtil{g})$ be the surjective homomorphism
that sends the first basis element to $\tau-1$ and the second to $\wtil{g}$.
Then by definition $\Omega((\tau - 1, \wtil{g}))$ is projectively equivalent to $\Ker(\rho)$.

We claim that $\Ker(\rho)$ is generated
by $(N_I,0)$ and $(\wtil{g},1-\tau)$. 
Indeed, $(N_I,0), (\wtil{g},1-\tau) \in \Ker(\rho)$ is clear.
Suppose that $(a,b)\in \mathcal \Ker(\rho)$, that is, $a(\tau-1)=-b\wtil{g}$. 
Since $\wtil{g}$ is a non-zero-divisor, $b$ is annihilated by $N_I$, and so we can write $b=b_0(1-\tau)$ for some $b_0 \in R$.
Then $(a,b)-b_0 (\wtil{g},1-\tau)$ is another element in $\Ker(\rho)$ whose second component is zero.
The fact that $(a - b_0 \wtil{g}, 0) \in \Ker(\rho)$ easily
gives that $a - b_0 \wtil{g} \in (N_I)$.
This shows the claim.

It is now easily checked that the first projection $R^2 \to R$ 
gives an isomorphism between $\Ker(\rho)$ and $(N_I, \wtil{g})$.
This completes the proof.
\end{proof}

By Corollary \ref{cor:real_adm} and Theorem \ref{first_main}, we obtain the following:

\begin{thm} \label{firstmain}
In the situation of Corollary \ref{cor:real_adm},
if $\Gamma$ is cyclic, then
we have
\[
\Phi(\Cl_L^{T, -}) \sim_{\pe}
\bigoplus_{v \in S(L/K)}  \bigl( N_{I_v}, 1 - \wtil{\phi_v}^{-1}+ \# I_v \bigr),
\]
where $\wtil{\phi_v}$ is a lift of $\phi_v$.
\end{thm}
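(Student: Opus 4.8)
The plan is to feed the description of $\Cl_L^{T, -}$ up to $\sim$ provided by Corollary \ref{cor:real_adm} into the monoid homomorphism $\Phi$, and then evaluate each summand by Theorem \ref{first_main}. Concretely, Corollary \ref{cor:real_adm} gives the equivalence
\[
\Cl_L^{T, -} \sim \bigoplus_{v \in S(L/K)} \omega^1(A_{I_v, \varphi_v})
\]
in $\cC$ over $\Z'[\Gamma] = \Z[\Gal(L/K)]^-$. Since $\Phi \colon \Cmodsim \hookrightarrow \Lat^{\pe}$ is a monoid homomorphism, applying it to both sides turns the equivalence into a projective-equivalence statement and commutes with the finite direct sum, so that
\[
\Phi(\Cl_L^{T, -}) \sim_{\pe} \bigoplus_{v \in S(L/K)} \Phi\bigl(\omega^1(A_{I_v, \varphi_v})\bigr).
\]

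Next I would invoke Theorem \ref{first_main}, which is available because $\Gamma$ is assumed cyclic and each pair $(I_v, \varphi_v)$ lies in $\wtil{\cS}$ (indeed for cyclic $\Gamma$ every subgroup-element pair does, and $I_v$ is automatically $p$-elementary as in the proof of Corollary \ref{cor:real_adm}). That theorem identifies each factor:
\[
\Phi\bigl(\omega^1(A_{I_v, \varphi_v})\bigr) \sim_{\pe} \bigl(N_{I_v},\, 1 - \wtil{\phi_v}^{-1} + \# I_v\bigr),
\]
the ideal of $\Z'[\Gamma]$ generated by the norm element $N_{I_v}$ of $I_v$ and by $1 - \wtil{\phi_v}^{-1} + \# I_v$, where $\wtil{\phi_v} \in \Gamma$ is any lift of $\phi_v$ (the class in $\Lat^{\pe}$ being independent of the lift, since changing the lift multiplies the generator by an element of $I_v$ modulo $N_{I_v}$). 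Substituting these identifications into the displayed direct sum yields exactly the claimed formula.

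The only point requiring a word of care is that $\Phi$ respects direct sums \emph{and} sends $\sim$ to $\sim_{\pe}$; but both are precisely the properties recorded when $\Phi$ was introduced (it is an injective monoid homomorphism, and $\sim_{\pe}$ is the relation on its target), so there is essentially no obstacle here — the theorem is a formal combination of Corollary \ref{cor:real_adm} and Theorem \ref{first_main}. The genuine content has already been spent in proving Theorem \ref{first_main} (the explicit Heller-operator computation of $\Omega((\tau-1,\wtil g))$); this statement is the clean packaging of that computation in the arithmetic setting.
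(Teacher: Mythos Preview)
Your proposal is correct and matches the paper's approach exactly: the paper proves this theorem in a single line, citing Corollary \ref{cor:real_adm} and Theorem \ref{first_main}, and your argument simply unpacks that citation by applying the monoid homomorphism $\Phi$ to the equivalence from Corollary \ref{cor:real_adm} and then invoking Theorem \ref{first_main} on each summand.
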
  \medskip

\subsection{Proof of Theorem \ref{thm:main1}}\label{ss:app_pf}

Now we begin the proof of Theorem \ref{thm:main1}.
As in \S \ref{ss:1-1}, we consider the case where $L^+/K$ is a cyclic $p$-extension, where $p$ is an odd prime number.
Let $F$ be the unique quadratic extension of $K$ in $L$.
Let us write $R = \Z_p[G]^- \simeq \Z_p[\Gamma]$ and 
${}_p \Cl_L^- = \Z_p \otimes_{\Z} \Cl_L^-$.

We begin with the following, which implies that ${}_p \Cl_L^-$ is a cyclic $R$-module in the situation of Theorem \ref{thm:main1}.

\begin{prop}\label{prop:cyc_gen}
Suppose that $L^+/K$ is a cyclic $p$-extension and ${}_p \Cl_F^- = 0$.
Then the $R$-module ${}_p \Cl_L^-$ is generated by $\# S(L/K)$ elements and annihilated by $N_\Gamma$.
\end{prop}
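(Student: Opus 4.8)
The plan is to reduce the statement about $R = \Z_p[G]^-$-modules to a statement over the totally real side, where class field theory and genus theory apply. The key point is that $L/F$ is an unramified-away-from-$S$ (in fact, after the identification $\Gal(L/F) \simeq \Gamma = \Gal(L^+/K)$, a $p$-extension) and that $\Cl_L^-$ carries a natural action of $\Gamma$ through which we can compare it with $\Cl_F^-$ via the transfer/norm and restriction maps.

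First I would set up the norm map $\nu\colon {}_p\Cl_L^- \to {}_p\Cl_F^-$ and the capitulation (extension-of-ideals) map $j_{L/F}\colon {}_p\Cl_F^- \to {}_p\Cl_L^-$, noting $\nu \circ j_{L/F}$ is multiplication by $[L:F] = \#\Gamma$, a power of $p$. Since ${}_p\Cl_F^- = 0$ by hypothesis, the coinvariants $\bigl({}_p\Cl_L^-\bigr)_\Gamma$ vanish: indeed the norm $N_\Gamma$ on ${}_p\Cl_L^-$ factors through $\nu$ followed by $j_{L/F}$ (up to the standard identification), hence is zero, and moreover $({}_p\Cl_L^-)_\Gamma$ surjects onto a subgroup of ${}_p\Cl_F^-$-related data that one checks is trivial using that $L^+/K$ is totally ramified (or more precisely using the description of $S(L/K)$ and genus theory for the extension $L/F$). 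This gives both assertions at once: $N_\Gamma$ annihilates ${}_p\Cl_L^-$, and since $\Gamma$ is a $p$-group and $R$ is a (product of) local ring(s) with maximal ideal containing the augmentation ideal $I_\Gamma$, Nakayama's lemma tells us the minimal number of $R$-generators of ${}_p\Cl_L^-$ equals $\dim_{\F_p} {}_p\Cl_L^-/I_\Gamma \cdot {}_p\Cl_L^- = \dim_{\F_p}({}_p\Cl_L^-/p)_\Gamma$.

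It then remains to bound this last dimension by $\# S(L/K)$. Here I would invoke the genus theory / ambiguous class number formula for the $p$-extension $L/F$ with group $\Gamma$, applied to the minus components: the group of $\Gamma$-ambiguous classes (equivalently, after the vanishing of ${}_p\Cl_F^-$, the whole of $({}_p\Cl_L^-)_\Gamma$ up to the relevant Herbrand-quotient bookkeeping) is controlled by the primes of $F$ ramified in $L/F$. Because $F^+ = K$ and $\Gamma = \Gal(L/F) \simeq \Gal(L^+/K)$, a prime of $F$ ramifies in $L/F$ precisely when it lies over a prime of $K$ that is ramified in $L^+/K$ and split in $F/K = L/L^+$ over $F$ — this is exactly the set $S(L/K)$ (each contributing one prime of $F$, by the split condition). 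Combining the (minus part of the) genus formula with the triviality of ${}_p\Cl_F^-$ yields $\dim_{\F_p}({}_p\Cl_L^-/p)_\Gamma \le \# S(L/K)$, as desired.

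The main obstacle will be the bookkeeping in the second step: making the minus-component genus-theoretic bound precise, i.e. carefully tracking how the minus idempotent interacts with the ambiguous class number formula for $L/F$ and with the local units at the primes in $S(L/K)$, and checking that the primes of $F$ ramified in $L/F$ are in bijection with $S(L/K)$ (using $F^+ = K$ and the split hypothesis). The first step (the two consequences of $({}_p\Cl_L^-)_\Gamma = 0$) is comparatively formal once one has the genus machinery set up, and the annihilation by $N_\Gamma$ follows immediately from ${}_p\Cl_F^- = 0$ together with the norm-restriction factorization.
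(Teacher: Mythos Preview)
Your overall strategy matches the paper's: annihilation by $N_\Gamma$ follows from the factorization of the norm through ${}_p\Cl_F^- = 0$, and the generator bound reduces by Nakayama to bounding $({}_p\Cl_L^-)_\Gamma$, which is then handled by genus theory for $L/F$. However, there are two concrete errors in the execution.

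First, your claim that $({}_p\Cl_L^-)_\Gamma$ vanishes is wrong and contradicts what you go on to prove: if the coinvariants vanished, Nakayama would force ${}_p\Cl_L^- = 0$ and zero generators would suffice. What the hypothesis ${}_p\Cl_F^- = 0$ gives you is that the \emph{image} of $N_\Gamma$ vanishes (hence the annihilation statement), not that the coinvariants do. Simply delete that sentence; the Nakayama reduction that follows it is correct.

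Second, and more substantively, your identification of the ramified primes in $L/F$ is incorrect. A prime $w$ of $F$ ramifies in $L/F$ if and only if the prime $v$ of $K$ below it ramifies in $L^+/K$; there is no requirement that $v$ split in $F/K$. So there are ramified primes of $F$ lying above primes outside $S(L/K)$, and moreover each $v \in S(L/K)$, being split, contributes \emph{two} primes of $F$, not one. A raw genus count for $L/F$ therefore overshoots $\# S(L/K)$. The missing idea is that the minus projection is precisely what kills the extra contributions: if $v$ is non-split in $F/K$, the inertia subgroup $I_w = I_v$ inside the relevant Galois group is fixed by complex conjugation, so $I_v^- = 0$; if $v \in S(L/K)$, complex conjugation swaps the two inertia groups $I_w$, $I_{w'}$, and one checks that $I_v^-$ is cyclic. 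The paper implements this by passing to the maximal abelian subextension $H'/F$ inside the relevant piece of the Hilbert class field of $L$, decomposing $\Gal(H'/F) = \Gal(H'/F)^+ \times \Gal(H'/F)^-$, and showing that $\Gal(H'/F)^- \simeq ({}_p\Cl_L^-)_\Gamma$ is generated by the $I_v^-$. You flag the minus bookkeeping as the ``main obstacle,'' but it is exactly this bookkeeping, not the raw ramification count, that produces the number $\# S(L/K)$.
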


\begin{proof}
The last statement that $N_{\Gamma}$ annihilates ${}_p\Cl_L^-$ 
is a direct consequence of the assumption ${}_p\Cl_F^- =0$.

For the first statement we use genus theory.
By Nakayama's lemma, it is enough to show that the Galois 
coinvariant module $({}_p\Cl_L^-)_{\Gamma}$ is generated by $\# S(L/K)$ elements as a $\Z_p$-module.
Let $H$ be the extension of $L$ that is a subfield of the Hilbert class 
field of $L$ and the Artin map gives an isomorphism $\Gal(H/L) \simeq {}_p\Cl_L^-$.
Then by Galois theory we find an intermediate field $H'$ of $H/L$ such that 
\[
\Gal(H'/L) \simeq ({}_p\Cl_L^-)_{\Gamma}.
\]
Since $\Gamma$ is cyclic, it is known that $\Gal(H'/F)$ is the 
abelianization of $\Gal(H/F)$, that is, $H'$ is the maximal 
abelian extension of $F$ in $H$.

Since $H'/K$ is Galois, the Galois group $\Gal(F/K)$ acts on $\Gal(H'/F)$, so we have a decomposition
\[
\Gal(H'/F) = \Gal(H'/F)^+ \times \Gal(H'/F)^-
\]
with respect to the action of the complex conjugation.
By the construction, we have $\Gal(H'/F)^+ \simeq \Gal(L/F) \simeq \Gamma$ and $\Gal(H'/F)^- \simeq \Gal(H'/L) \simeq ({}_p\Cl_L^-)_{\Gamma}$.

For any finite prime $v$ of $K$, we define a subgroup $I_v \subset \Gal(H'/F)$ as $I_v = \sum_{w \mid v} I_w$, where $w$ denotes the (either one or two) primes of $F$ lying above $v$ and $I_w \subset \Gal(H'/F)$ denotes the inertia group of $w$ in $H'/F$.
Then $I_v$ is stable under the action of $\Gal(F/K)$, so we also have a decomposition $I_v = I_v^+ \times I_v^-$.
Note that $I_v^+$ is identified with the inertia group of $v$ in $\Gal(L^+/K) \simeq \Gamma$.

Since we assume ${}_p \Cl_F^- = 0$, the group $\Gal(H'/F)^-$ is generated by $I_v^-$ for all finite primes $v$ of $K$.
Therefore, the proposition follows if we show that $I_v^- = 0$ unless $v \in S(L/K)$ and, moreover, $I_v^-$ is cyclic when $v \in S(L/K)$.
Since $H'/L$ is unramified, for each prime $w$ of $F$, the inertia group $I_w$ in $\Gal(H'/F)$ is isomorphic to the inertia group of $w$ in $\Gal(L/F) \simeq \Gamma$.
This already shows $I_v^- = 0$ unless $v \in S(L/K)$; if $v$ does not split in $F/K$, then $I_v = I_w \simeq I_v^+$, where $w$ is the unique prime of $F$ lying above $v$.
If $v \in S(L/K)$, there are two primes $w, w'$ of $F$ lying above $v$.
Both $I_w$ and $I_{w'}$ are cyclic since $\Gamma$ is cyclic, and moreover both are isomorphic to $I_v^+$.
Combining this with $I_v = I_w + I_{w'}$, we conclude that $I_v^-$ is cyclic, as claimed.
\end{proof}

From now on, let us assume the hypotheses of Theorem \ref{thm:main1}.
By Proposition \ref{prop:cyc_gen}, we can write ${}_p\Cl_L^- = R/J$ for a suitable ideal $J$.
By Theorem \ref{firstmain}, taking Lemma \ref{lem:no_root} into account, $J$ is projectively equivalent, and even isomorphic,
to $(N_{\Gamma}, p^r)$.
Note that this lattice is non-free, so the case $r = 1$ follows at once.
By Proposition \ref{prop:cyc_gen}, $J$ must contain $N_\Gamma$. 

Therefore, Theorem \ref{thm:main1} follows from the following algebraic proposition:

\begin{prop}\label{prop:ord_alg}
Suppose that $\Gamma$ is a cyclic group of order $p^r$ with a 
prime $p \geq 3$ and $r \geq 2$.
Let $J$ be an ideal of $R = \Z_p[\Gamma]$ such that 
$(N_{\Gamma}) \subset J \subset R$ and $J \simeq (N_{\Gamma}, p^r)$.
Then $\ord_p(\# (R/J))$ is in $\{r, 2r, \cdots, pr\} 
    \cup \{pr + 1, pr + 2, \dots \}$.
\end{prop}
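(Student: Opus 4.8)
The plan is to work entirely inside $R = \Z_p[\Gamma]$ with $\Gamma$ cyclic of order $p^r$, and to extract from the isomorphism $J \simeq (N_\Gamma, p^r)$ enough structural information about $J$ to pin down $\ord_p(\#(R/J))$. Write $N = N_\Gamma$ and let $\fm$ be the maximal ideal of $R$, so $R/\fm = \F_p$. Since $R/(N)$ is the ring of integers of $\Q_p(\zeta_{p^r})$, an $R$-module isomorphism $J \xrightarrow{\sim} (N, p^r)$ carries the submodule $(N) \subset J$ to a cyclic $R$-submodule of $(N, p^r)$ annihilated by nothing extra; the natural candidate is the image $(N)$ itself, but I must not assume this — rather, I would first analyze the lattice $\cL := (N, p^r)$ directly. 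One computes $R/\cL \simeq \Z_p[\Gamma]/(N, p^r)$, which has order $p^{pr}$ (the quotient by $N$ is $\Z_p[\zeta_{p^r}]$ of $\Z_p$-rank $p^{r-1}(p-1)$, then killing $p^r$ gives $p$-length $r \cdot p^{r-1}(p-1)$... I would recompute this carefully: actually $\#(R/(N,p^r)) = p^{r \cdot \phi(p^r)} \cdot$ correction from the $N$-torsion, so the exact exponent here is one of the things to nail down). The point is that $\cL$ is \emph{not} free, and its isomorphism type as an $R$-lattice is rigid enough that the possible embeddings $\cL \hookrightarrow R$ with image containing $(N)$ are severely constrained.

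Concretely, the key step: any ideal $J$ with $(N) \subset J \subset R$ and $J \simeq \cL$ must, after the isomorphism, have $(N) \subset J$ correspond to a rank-one submodule of $\cL = (N, p^r)$ which generates a subquotient isomorphic to $\Z_p[\zeta_{p^r}]$ (since $R/(N) \cong \Z_p[\zeta_{p^r}]$ is what $J/(N)$ must be a quotient of, compatibly). Dualizing or using the structure of $\cL/(N)$, one finds that $J/(N)$ is a quotient of $\Z_p[\zeta_{p^r}]$, hence of the form $\Z_p[\zeta_{p^r}]/\fP^k$ for some $k \geq 0$ where $\fP = (1-\zeta_{p^r})$ is the unique prime above $p$. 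Therefore $\#(R/J) = \#(R/(N)) \cdot \#((N)/J)^{-1}\cdots$ — more cleanly, $\ord_p(\#(R/J)) = \ord_p(\#(R/(N+J))) + \ord_p(\#(\text{something}))$; the upshot is
\[
\ord_p(\#(R/J)) = \ord_p\bigl(\#(\Z_p[\zeta_{p^r}]/\fP^k)\bigr) + c
\]
for a constant $c$ independent of $J$ (coming from the fixed $p^r$-part of $\cL$), and since $\fP$ has residue degree one, $\ord_p(\#(\Z_p[\zeta_{p^r}]/\fP^k)) = k/e \cdot \phi(p^r)$ — no: $\ord_p$ of a norm, so it equals $k / e$ where $e = \phi(p^r)$ is the ramification index. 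Hence the valuation lands in $\{c + k/e\}$, and the arithmetic of which $k$ are achievable, together with $(N)\subset J$ forcing $k \leq$ some bound or $k$ unrestricted, produces the stated set $\{r, 2r, \dots, pr\} \cup \{pr+1, pr+2,\dots\}$: the "divisible by $r$ up to $pr$" part comes from $k \le e$ forcing $k/e \cdot (\text{stuff})$ to be a multiple of $r$, and once $k > e$ the valuation can take every integer value $> pr$.

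The main obstacle I anticipate is precisely this last bookkeeping: translating "$J \simeq (N, p^r)$ as $R$-modules" plus "$(N) \subset J$" into a clean parametrization of $J$. The isomorphism is only as abstract modules, so I cannot directly say $J = (N, x)$; I would need to show that the lattice $(N,p^r)$ has essentially one embedding type into $R$ over the submodule $(N)$, which amounts to understanding $\Ext^1_R$ or the automorphisms of $\cL$ acting on the relevant Hom-set. An alternative, possibly cleaner, route: use $\Phi$ and Lemma \ref{commsq} in reverse — since $R/J = {}_p\Cl_L^-$ has $\Phi$-image $(N, p^r)$ up to projective equivalence, and $J$ is a genuine ideal, reduce the whole question to computing $\ord_p\#(R/J)$ for all $J$ in the projective-equivalence class that happen to be ideals containing $N$; the projective-equivalence class of $(N,p^r)$ is small because $R$ is local, so there are only finitely many such $J$ up to the action of units, and one checks the valuation on each. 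I would pursue whichever of these two reductions makes the final enumeration most transparent, expecting the local structure of $\Z_p[\Gamma]$ and the explicit form of $(N, p^r)$ to carry the argument once the reduction is in place.
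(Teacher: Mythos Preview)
Your proposal has a fundamental error that derails the entire strategy. You assert that ``$R/(N)$ is the ring of integers of $\Q_p(\zeta_{p^r})$,'' but this is false for $r \ge 2$: writing $R = \Z_p[X]/(X^{p^r}-1)$, the norm element is $N_\Gamma = (X^{p^r}-1)/(X-1) = \prod_{i=1}^r \Phi_{p^i}(X)$, so
\[
\ol R := R/(N_\Gamma) \;\simeq\; \Z_p[X]\Big/\prod_{i=1}^r \Phi_{p^i}(X),
\]
which has $\Z_p$-rank $p^r - 1$, not $\phi(p^r) = p^{r-1}(p-1)$. In particular $\ol R$ is \emph{not} a DVR when $r\ge 2$, and its ideals are not parametrized by a single exponent $k$. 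Your subsequent attempt to write $J/(N)$ as $\Z_p[\zeta_{p^r}]/\fP^k$ and to express $\ord_p(\#(R/J))$ as a function of that single $k$ therefore collapses: the whole ``$k \le e$ gives multiples of $r$, $k > e$ gives everything'' heuristic is built on a ring that does not appear.

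What the paper actually does is work with this multi-level structure directly. One first shows $J = w\,(N_\Gamma,p^r)$ for some $w \in \Q_p[\Gamma]^\times$ with $p^r w \in R$ and $\aug(w) \in \Z_p^\times$, whence $R/J \simeq \ol R/(\ol{p^r w})$. Then one uses the injection $\ol R \hookrightarrow \prod_{i=1}^r \Z_p[\mu_{p^i}]$ to write
\[
\ord_p\bigl(\#(R/J)\bigr) \;=\; \sum_{i=1}^r c_i, \qquad c_i = \ord_{\Q_p(\mu_{p^i})}\bigl(\chi_i(p^r w)\bigr),
\]
one valuation \emph{per cyclotomic level}. The key step --- and the idea your proposal is missing entirely --- is a congruence argument: writing $p^r w = (\sigma-1)u + p^r\aug(w)$ and $a_i = \ord_{\Q_p(\mu_{p^i})}(\chi_i(u))$, the relation $\chi_i(u)^p \equiv \chi_{i-1}(u) \pmod{p}$ forces either $a_1 = \cdots = a_r \in \{0,\dots,p-2\}$ (giving $\sum c_i = r(a_1+1) \in \{r,2r,\dots,(p-1)r\}$) or all $a_i \ge p-1$ (giving $\sum c_i \ge pr$). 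It is precisely this coupling of the $r$ levels that produces the ``multiple of $r$ or $\ge pr$'' dichotomy; a single-parameter model cannot see it.
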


\begin{proof}
By the assumption, there is an element $w \in \Q_p[G]^{\times}$ such that $J = w (N_{\Gamma}, p^r)$.
Then we have
\[
(N_{\Gamma}) \subset w (N_{\Gamma}, p^r) \subset R.
\]

\begin{claim}\label{claim:1}
We have $w \in \frac{1}{p^r}R$ and $\aug(w) \in \Z_p^{\times}$,
where $\aug$ denotes the augmentation.
\end{claim}

\begin{proof}
The claim $w \in \frac{1}{p^r}R$ is clear.
We have $w N_{\Gamma} = \aug(w) N_{\Gamma}$, so $\aug(w) \in \Z_p$ also follows.
It remains to show $\aug(w) \in \Z_p^{\times}$ by using $N_{\Gamma} \in w (N_{\Gamma}, p^r)$.

We have 
\[
w (N_{\Gamma}, p^r) 
= w (N_{\Gamma}, N_{\Gamma} - p^r)
= (\aug(w) N_{\Gamma}, w(N_{\Gamma} - p^r)),
\]
so there are
$x \in \Z_p$ and $y \in R$ such that
\[
N_{\Gamma} = x \aug(w) N_{\Gamma} + y w (N_{\Gamma} - p^r).
\]
Since $N_{\Gamma}(N_{\Gamma} - p^r) = 0$, we have $y (N_{\Gamma} - p^r) = 0$, so this is simplified to
\[
N_{\Gamma} = x \aug(w) N_{\Gamma}.
\]
This says $x \aug(w) = 1$, so the claim follows.
\end{proof}

Now we have
\[
J = w (N_{\Gamma}, p^r) = (N_{\Gamma}, p^r w).
\]
So
\[
R/J \simeq \ol{R}/(\ol{p^r w}),
\]
where we put $\ol{R} = R/(N_{\Gamma})$.

We fix a generator $\sigma$ of $\Gamma$.
We also fix a compatible system $(\zeta_{p^i})$ of $p$-power roots of unity, that is, $\zeta_{p^i}$ is a generator of the group $\mu_{p^i}$ of $p^i$-th roots of unity and we have $(\zeta_{p^i})^p = \zeta_{p^{i-1}}$.
For each $1 \leq i \leq r$, let $\chi_i: \Gamma \to \Z_p[\mu_{p^i}]^{\times}$ 
be the character such that $\chi_i(\sigma) = \zeta_{p^i}$.
We also write $\chi_i$ to mean the induced algebra homomorphism 
$R \to \Z_p[\mu_{p^i}]$.
Then $(\chi_i)_{1 \leq i \leq r}$ gives an injective homomorphism
\[
\ol{R} \hookrightarrow \prod_{i=1}^r \Z_p[\mu_{p^i}].
\]
The cokernel is finite.
Then by a standard argument, we obtain
\[
\# (\ol{R}/(\ol{p^r w}))
= \# \bigg( \prod_{i=1}^r \Z_p[\mu_{p^i}]/(\chi_i(p^r w)) \bigg).
\]
It follows that
\begin{align}
\ord_p(\# (\ol{R}/(\ol{p^r w})))
& = \sum_{i =1}^r \ord_p (\# \Z_p[\mu_{p^i}]/(\chi_i(p^r w)))\\
& = \sum_{i =1}^r \ord_{\Q_p(\mu_{p^i})}(\chi_i(p^r w)),
\end{align}
where $\ord_{\Q_p(\mu_{p^i})}$ denotes the additive valuation on $\Q_p(\mu_{p^i})$, normalized so that we have $\ord_{\Q_p(\mu_{p^i})}(\zeta_{p^i}-1) = 1$.

Here is a quick summary:
Put $c_i := \ord_{\Q_p(\mu_{p^i})}(\chi_i(p^r w))$.
Then we have
\[
\ord_p(\# (R/J)) = \sum_{i = 1}^r c_i.
\]

We have to investigate $c_i$.
By Claim \ref{claim:1}, we have $p^r w \in R$, so there exists 
an element $u \in R$ such that
\[
p^r w - \aug(p^r w) = (\sigma - 1)u.
\]
Then we have
\[
p^r w = (\sigma - 1)u + p^r \aug(w)
\]
and Claim \ref{claim:1} implies $\aug(w) \in \Z_p^{\times}$.

Put $a_i := \ord_{\Q_p(\mu_{p^i})}(\chi_i(u))$.

\begin{claim}\label{claim:2}
If one of $a_1, \dots, a_r$ is less than $p-1$, then we have $a_1 = \dots = a_r$.
\end{claim}

\begin{proof}
For $2 \leq i \leq r$, since $\chi_i(\sigma)^p = \chi_{i-1}(\sigma)$, 
we have $\chi_i(u)^p \equiv \chi_{i-1}(u)$ modulo $(p)$.
Therefore, one of the following holds:
\begin{itemize}
\item
$\ord_p(\chi_i(u)^p) = \ord_p(\chi_{i-1}(u))$, that is, $a_i = a_{i-1}$.
\item
$\ord_p(\chi_i(u)^p) \geq 1$ and $\ord_p(\chi_{i-1}(u)) \geq 1$, 
that is, $a_i \geq p^{i-2}(p-1)$ and $a_{i-1} \geq p^{i-2}(p-1)$.
\end{itemize}
This observation implies the claim (the latter option cannot occur 
for any $i$ by induction).
\end{proof}

Now let us complete the proof of the proposition.
We put
\[
b_i := \ord_{\Q_p(\mu_{p^i})}(\chi_i((\sigma - 1)u)) = 1 + a_i.
\]

\underline{Case 1.}
Suppose one of $a_1, \dots, a_r$ is less than $p-1$.
Then Claim \ref{claim:2} implies 
\[
a_1 = \dots = a_r \in \{0, 1, \dots,  p-2\},
\]
so
\[
b_1 = \dots = b_r \in \{1, 2, \dots,  p-1\}.
\]
Then, since
\[
\ord_{\Q_p(\mu_{p^i})}(p^r \aug(w)) = r p^{i-1}(p-1) \geq r(p-1) > p-1 \geq b_i,
\]
we obtain $c_i = b_i$ for $1 \leq i \leq r$.
Therefore, 
\[
\sum_{i = 1}^r c_i = r b_1 \in \{r, 2r, \cdots, (p-1)r \}.
\]

\underline{Case 2.}
Suppose all of $a_1, \dots, a_r$ are $\geq p -1$.
Then all of $b_1, \dots, b_r$ are $\geq p$.
As in Case 1, we have
\[
\ord_{\Q_p(\mu_{p^i})}(p^r \aug(w)) \geq p,
\]
so we deduce that all of $c_1, \dots, c_r$ are $\geq p$.
Therefore, we have
\[
\sum_{i = 1}^r c_i \geq p r.
\]

This completes the proof of Proposition \ref{prop:ord_alg}.
\end{proof}

This also finishes the proof of Theorem \ref{thm:main1}.

\subsection{Numerical examples}\label{ss:app_ex}

For numerical examples we are forced to choose $K = \Q$, $p=3$, and $r = 2$.
We take the imaginary quadratic field $F$ as one of 
\[
\Q(\sqrt{-1}), \Q(\sqrt{-2}), \Q(\sqrt{-5}), \Q(\sqrt{-6}).
\]
(Note that $\Q(\sqrt{-3})$ is not allowed.)
The class numbers of these are $1, 1, 2, 2$ respectively, so they are prime to $3$.

Also, we take $L^+$ as the unique subfield of $\Q(\mu_q)$ of degree $9$ for some prime $q$ that is congruent to $1$ modulo $9$.
The prime $q$ must split in $F/\Q$, which can be rephrased as a certain congruence condition of $q$ (e.g., when $F = \Q(\sqrt{-1})$, then $q$ is congruent to $1$ modulo $4$).
We consider the primes $q$ in the range $q < 3600$.

Our fields $L$ will be the compositum of $L^+$ and $F$.
The numerical result is that the 3-valuation of the class number of $\Cl_L^-$ takes values
\[
2, 4, 6, 7, 8, 9, 10, 11.
\]
This does not violate the prediction, of course, and also suggests 
that our prediction is sharp.

\begin{rem}
We even did more: even if we remove the condition that $S(L/K)$ consists of a single element (but all $v \in S(L/K)$ are totally ramified in $L^+/K$), a similar reasoning shows that $\ord_p(\# \Cl_L^-)$ is in the set
\[
\{rn, r(n+1), \dots, r (n+p-1) \} \cup \{r(n+p-1)+1, r(n+p-1)+2, \dots\},
\]
where we put $n = \# S(L/K)$.
When $n = 1$, this recovers Theorem \ref{thm:main1}.
We can of course check this generalized prediction for numerical examples.
For instance, for $p = 3$, $r = 2$, and $n = 2$, the possibilities are $4, 6, 8, 9, 10, \dots$. 
This theoretical result can be shown by suitably modifying Proposition \ref{prop:ord_alg}; the details are omitted.
\end{rem}

\section{The realizability problem}  \label{sec:3}

In this section, we prove Theorem \ref{thm:real_p}.
Before that, in \S \ref{ss:real_ex1}, we will illustrate the problem in the simplest non-trivial case, i.e., when $\Gamma$ is the cyclic group whose order is an odd prime number $p$.
The proof of Theorem \ref{thm:real_p} will be given in \S \ref{ss:pf_real}.

\subsection{First case study}  \label{ss:real_ex1}

Let us show the following, which was stated in the introduction:

\begin{thm}\label{thm:p_real}
 Let $\Gamma$ be a cyclic group whose order is an odd prime number $p$ and we work with the coefficient ring $\Z'[\Gamma]$.
Then we have
\[
\cZ^{\rea} = \cZ^{\adm} = \Cmodsim,
\]
that is, every equivalence class of finite $\Gamma$-modules are realized as the class of $\Cl_L^{T, -}$ for some extension $L/K$ with $\Gal(L^+/K) \simeq \Gamma$.
Moreover, we may restrict the base field $K$ to be $\Q$.
\end{thm}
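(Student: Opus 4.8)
The plan is to verify the chain of equalities $\cZ^{\rea} = \cZ^{\adm} = \Cmodsim$ separately, the second one being purely algebraic and the first one arithmetic (and a special case of Theorem \ref{thm:real_p}). For the algebraic equality, note that when $\Gamma$ is cyclic of prime order $p$, the set $\wtil{\cS}$ consists of the single non-trivial subgroup $I = \Gamma$ together with an arbitrary element $\varphi \in \Gamma/I = \{1\}$, so there is essentially one module $A_{I,\varphi} = \Z'[\Gamma]/(1 - 1 + p) = \Z'[\Gamma]/(p) = \F_p[\Gamma]$. Thus $\cZ^{\adm}$ is the cyclic submonoid generated by $[\omega^1(\F_p[\Gamma])]$. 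To see this fills out all of $\Cmodsim$, I would invoke the lattice interpretation of \S \ref{ss:eq_lat}: by Theorem \ref{first_main} (with $\Gamma$ cyclic, $I = \Gamma$, $\wtil{\varphi} = 1$), $\Phi(\omega^1(A_{I,\varphi})) \sim_{\pe} (N_\Gamma, p)$, which is the unique non-free indecomposable $\Z'[\Gamma]$-lattice in the Diederichsen--Reiner classification for $\Z[\Z/p]$. Since every $\Z'[\Gamma]$-lattice is, up to projective equivalence, a direct sum of copies of this one lattice, and $\Phi$ is an injective monoid homomorphism onto the image described in \cite[Lemma 4.3]{GK}, one deduces $\Phi(\Cmodsim)$ is exactly the free monoid on the class of $(N_\Gamma, p)$; hence $\Cmodsim = \cZ^{\adm}$.

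For the arithmetic equality $\cZ^{\rea} = \cZ^{\adm}$ over $K = \Q$, it suffices by Corollary \ref{cor:real_adm} to realize, for each $n \geq 0$, the class $n \cdot [\omega^1(\F_p[\Gamma])]$ as $[\Cl_L^{T,-}]$ for a suitable CM-field $L$ with $\Gal(L^+/\Q) \simeq \Z/p$ and $\Gal(L/L^+)$ given by a fixed imaginary quadratic field $F$ with $p \nmid h_F$. The recipe: take $L^+$ to be the degree-$p$ subfield of $\Q(\mu_q)$ for a prime $q \equiv 1 \pmod p$ that splits in $F/\Q$; such $q$ is totally ramified in $L^+/\Q$ and splits in $L/L^+$, hence lies in $S(L/K)$, contributing one copy of $\omega^1(A_{I_q, \varphi_q}) = \omega^1(\F_p[\Gamma])$ (here $I_q = \Gamma$ since $q$ is totally ramified). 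To get exactly $n$ such primes, one uses $L^+$ = a degree-$p$ subfield of $\Q(\mu_{q_1 \cdots q_n})$ with distinct primes $q_1, \ldots, q_n$ each $\equiv 1 \pmod p$ and each split in $F/\Q$ — by Dirichlet such primes exist in abundance. One then checks that every ramified prime $v$ of $\Q$ in $L/\Q$ either lies in $S(L/K)$ (the $q_i$) or else has $A_v^- \in \cP$ by Lemma \ref{lem:A_ct} (the primes ramified only in $F/\Q$ are ramified in $L/L^+$), so Corollary \ref{cor:arith_main2} gives $\Cl_L^{T,-} \sim \bigoplus_{i=1}^n \omega^1(\F_p[\Gamma])$, which is $n \cdot [\omega^1(\F_p[\Gamma])]$. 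Finally, Lemma \ref{lem:no_root} disposes of the $T$-modification in the $p$-part provided $L$ has no non-trivial $p$-th roots of unity, which holds as long as $F \neq \Q(\sqrt{-p})$ when $p = 3$ (and automatically for larger $p$).

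The main obstacle I anticipate is not the construction of the $q_i$ (Dirichlet handles that) but rather the bookkeeping needed to ensure that the \emph{only} primes contributing non-trivially to the right-hand side of Corollary \ref{cor:arith_main2} are the chosen $q_i$, and that each contributes exactly one copy with inertia group all of $\Gamma$ — i.e., controlling $S(L/K)$ precisely. This requires checking that the primes dividing the discriminant of $F$ are unramified in $L^+/\Q$ (true since $\Q(\mu_{q_1 \cdots q_n})$ is ramified only at the $q_i$) and hence are ramified in $L/L^+$, landing in the scope of Lemma \ref{lem:A_ct}; and that each $q_i$, being totally ramified in the degree-$p$ field $L^+$ and split in $F$, indeed has full inertia $I_{q_i} = \Gamma$ and lies in $S(L/K)$. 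A secondary subtlety is that we need $p \nmid h_F$ so that the analysis via $\Cl_F^-$ (implicit in reducing to the minus part over $\Gamma$) is clean; the listed examples $\Q(\sqrt{-1}), \Q(\sqrt{-2}), \Q(\sqrt{-5}), \Q(\sqrt{-6})$ for $p = 3$ illustrate that such $F$ exist, and for general odd $p$ one can always find an imaginary quadratic $F$ with $p \nmid h_F$ and $F \neq \Q(\sqrt{-p})$.
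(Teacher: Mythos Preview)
Your approach is essentially the same as the paper's, and the core of the argument is correct: identify $\Cmodsim$ as the free monoid of rank one on the class corresponding to the lattice $(N_\Gamma,p)$, and then construct for each $n\ge 0$ an extension $L/\Q$ with $\# S(L/\Q)=n$ using primes $q_i\equiv 1\pmod p$ for $L^+$ and a suitable imaginary quadratic $F$.

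Two points deserve correction. First, your construction does not cover $n=0$: with $n=0$ your $L^+$ would be a degree-$p$ subfield of $\Q(\mu_1)=\Q$, which is impossible. The paper handles this by keeping a nontrivial $L^+$ (say, ramified at a single prime $q$) but choosing $F$ so that $q$ is \emph{not} split in $F/\Q$; then $S(L/\Q)=\emptyset$. Second, the side conditions you impose --- $p\nmid h_F$, absence of $p$-th roots of unity in $L$, and disposing of the $T$-modification --- are all unnecessary here and reflect a confusion with the hypotheses of Theorem~\ref{thm:main1}. The statement concerns $\Cl_L^{T,-}$ itself, and Corollary~\ref{cor:arith_main2} determines its equivalence class purely from $S(L/K)$, with no hypothesis on $\Cl_F^-$ or on roots of unity. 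So the ``secondary subtlety'' paragraph can be dropped entirely.
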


\begin{proof}
Since $\Gamma$ is a $p$-group, the monoid $\Cmodsim$ for $\Z'[\Gamma]$ can be identified with that for $\Z_p[\Gamma]$ (see Proposition \ref{prop:localization}).
Therefore, we may work over $\Z_p[\Gamma]$ instead.
By the interpretation via lattices as in \S \ref{ss:eq_lat}, it is enough to examine
\[
\Phi(\cZ^{\rea}) = \Phi(\Cmodsim)
\]
considered in $\Lat^{\pe}$.
 
In \cite[\S 5.1]{GK}, we showed that $\Cmodsim$ is a free monoid of rank one.
This corresponds to the well-known classification
of $\Z_p[\Gamma]$-lattices that every lattice with constant rank is up to a free
summand a direct sum of copies of $\cM$, where $\cM$ is the maximal order in $\Q_p[\Gamma]$.
Therefore, the basis of $\Phi(\Cmodsim)$ is the class of $\cM$.

On the other hand, by Theorem \ref{firstmain}, $\Phi(\cZ^{\rea})$ consists of the classes of
\[
\bigoplus_{v \in S(L/K)} (N_{\Gamma}, p),
\]
where $L/K$ varies.
Here we used the observation that, for any $v \in S(L/K)$, we have $I_v = \Gamma$ and $\varphi_v$ is trivial since $\Gamma$ is a simple group.
It is easy to see that $(N_{\Gamma}, p) = (N_{\Gamma}, p - N_{\Gamma})$ is isomorphic to $\cM$.

As a result, the theorem follows if we show that for any given integer $n \geq 0$, there is an abelian CM extension $L/\Q$ with $\Gal(L^+/\Q) \simeq \Gamma$ such that $\# S(L/\Q) = n$.
This is a fairly easy exercise.
When $n \geq 1$, take prime numbers $q_1, \dots, q_n$ that are congruent to $1$ modulo $p$, and take $L^+$ as a cyclic extension of $\Q$ of order $p$ in $\Q(\mu_{q_1}, \dots, \mu_{q_n})$ in which all $q_1, \dots, q_n$ are ramified.
By taking an imaginary quadratic field $F$ in which $q_1, \dots, q_n$ are split, we find a desired field as $L = F L^+$.
When $n = 0$, we only have to take $F$ so that the primes are not split.
\end{proof}

\subsection{Proof of Theorem \ref{thm:real_p}}  \label{ss:pf_real}

Now we come back to general $\Gamma$.
By the description in Corollary \ref{cor:real_adm}, we obtain Theorem \ref{thm:real_p} from the following:

\begin{thm}\label{thm:realize}
Let $\Gamma$ be an abstract finite abelian group whose order is odd.
Suppose that we are given a family $(I_1, \varphi_1), \dots, (I_n, \varphi_n) \in \wtil{\cS}$.
Then there exist a totally real field $K$, a finite abelian CM-extension $L/K$, and a group isomorphism $\Gal(L^+/K) \simeq \Gamma$ satisfying the following:
We have $\# S(L/K) = n$ and we can label $S(L/K) = \{v_1, \dots, v_n\}$ so that the inertia group $I_{v_i}$ corresponds to $I_i$ and the Frobenius $\varphi_{v_i}$ in $\Gamma/I_{v_i}$ corresponds to $\varphi_i$.
\end{thm}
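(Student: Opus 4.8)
The plan is to realize each prescribed pair $(I_i,\varphi_i)$ by the local behavior of a carefully chosen prime, using cyclotomic fields over $\mathbb{Q}$ as building blocks and then twisting by a quadratic imaginary field to pass to the CM situation. First I would reduce to the following local-global matching problem: I want a totally real abelian extension $L^+/\mathbb{Q}$ (or over a suitable base $K$) with $\Gal(L^+/\mathbb{Q})\simeq\Gamma$, together with $n$ rational primes $q_1,\dots,q_n$, unramified outside themselves, such that for each $i$ the decomposition data of $q_i$ in $L^+/\mathbb{Q}$ is exactly: inertia group $I_i$ and Frobenius $\varphi_i\in\Gamma/I_i$. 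Since $I_i$ is elementary, hence ($p$-elementary for the residue characteristic) realizable as the inertia group of a tamely or wildly ramified prime in a cyclotomic-type local extension, the key input is that for a subgroup $I\subset\Gamma$ and a chosen coset $\varphi\in\Gamma/I$, one can find a prime $q$ whose local extension $L^+_{\mathfrak q}/\mathbb{Q}_q$ has Galois group the decomposition group generated by $I$ and a lift of $\varphi$, with inertia exactly $I$. The standard way to arrange this: decompose $\Gamma$ according to its Sylow subgroups, build $L^+$ as a compositum of cyclic pieces coming from $\mathbb{Q}(\mu_{q_i})$ and from an auxiliary prime-power-conductor or prime-to-$q_i$ piece carrying the Frobenius, and invoke the Chinese Remainder Theorem / Grunwald–Wang type freedom to prescribe the splitting of each $q_i$ independently.

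Next I would carry out the construction in steps. Step one: write $\Gamma=\prod_\ell \Gamma_\ell$ over primes $\ell\mid\#\Gamma$ and note $I_i$ likewise decomposes; since $I_i$ is elementary, only one $\ell=p_i$ contributes a nontrivial $p$-part to the ramification, matching the residue characteristic of $q_i$. Step two: for each $i$ choose $q_i\equiv 1\pmod{\,\mathrm{(suitable modulus)}}$ so that a quotient of $(\mathbb{Z}/q_i)^\times$ surjects onto the $p_i$-part of $I_i$ (this is where I make $q_i$ ramify with the right tame inertia; wild inertia, if present, is handled by choosing $q_i=p_i$ itself or by using a $\mathbb{Z}_{p_i}$-layer — but since $\Gamma$ is finite the wild part is bounded and can be absorbed) and so that the unramified part at $q_i$ produces the prescribed Frobenius $\varphi_i$ in $\Gamma/I_i$. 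Step three: assemble $L^+$ as the compositum of these cyclic cyclotomic pieces over $\mathbb{Q}$; by construction $\Gal(L^+/\mathbb{Q})\simeq\Gamma$ and each $q_i$ has exactly the required $(I_i,\varphi_i)$, while all other ramified primes (there are none beyond the $q_i$ by the conductor-discriminant control) contribute nothing to $S(L/K)$. Step four: choose an imaginary quadratic field $F$ in which every $q_i$ splits — this is a congruence condition on $q_i$ that I fold into step two — and set $L=FL^+$, $K=\mathbb{Q}$; then $F^+=\mathbb{Q}=K$, $\Gal(L/F)\simeq\Gamma$, and $S(L/K)$ consists precisely of the $q_i$ with the correct inertia and Frobenius, since splitting in $L/L^+$ is governed by splitting in $F/\mathbb{Q}$.

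For the general base field: if one insists on a specific $K$ (as Remark \ref{rem:K_recipe} hints), or if the congruence conditions over $\mathbb{Q}$ clash when $\Gamma$ is not cyclic, I would instead build $L^+/K$ by a Grunwald–Wang argument, prescribing the completions $L^+_w/K_v$ at finitely many places to have the desired local Galois groups and leaving the global field to exist by the realization theorem for abelian extensions with prescribed local behavior; one must only check the Grunwald–Wang exceptional case does not obstruct, which it will not since $\#\Gamma$ is odd and we can avoid the prime $2$ entirely. The main obstacle I expect is Step two done uniformly: ensuring that a single prime $q_i$ can simultaneously carry the prescribed inertia subgroup $I_i$ (not merely some subgroup, and with the wild part correct when $I_i$ is a nontrivial $p$-group and $q_i$ must equal $p_i$) and the prescribed Frobenius coset, while also satisfying the splitting condition in $F$ — this is a matter of stacking independent congruence/Chebotarev conditions, and the delicate point is that wild ramification forces $q_i=p_i$, so the "free" choices of $q_i$ shrink; one resolves this by allowing the tamely ramified part and the wild part of $I_i$ to be realized by two different auxiliary extensions (one of conductor a prime $\neq p_i$, one of $p_i$-power conductor) whose compositum gives inertia $I_i$, so that the matching reduces again to the easy cyclic cyclotomic case handled in the proof of Theorem \ref{thm:p_real}.
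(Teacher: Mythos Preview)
Your overall strategy --- prescribe local extensions at finitely many places, globalize via Grunwald--Wang, then adjoin a quadratic CM field --- is exactly the paper's approach, and the Grunwald--Wang special case is indeed excluded since $\#\Gamma$ is odd. However, there are two genuine gaps.

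First, the attempt to stay over $K=\mathbb{Q}$ cannot succeed in general. For any prime $q$, the local unit group $\mathcal{O}_{\mathbb{Q}_q}^\times$ has pro-cyclic $p$-part (for any odd $p$), so every inertia subgroup arising in an abelian extension of $\mathbb{Q}_q$ is cyclic. Thus whenever some $I_i$ is non-cyclic (which is allowed: $I_i$ is only required to be elementary), no rational prime can carry inertia $I_i$. Your proposed fix of realizing the wild and tame parts of $I_i$ by ``two different auxiliary extensions'' does not help: the inertia group at a single place $v_i$ is a single subgroup of $\Gamma$, and you cannot manufacture it as a product of inertia groups at two distinct primes. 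This is precisely why the paper begins by enlarging the base field $K$ so that $\mathcal{O}_{K_{v_i}}^\times$ surjects onto $I_i$ (taking $v_i$ above the relevant prime $p_i$), and why Remark~\ref{rem:K_recipe} says $K=\mathbb{Q}$ works only when $\Gamma$ is cyclic.

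Second, and more seriously, you have not controlled $\#S(L/K)$. Grunwald--Wang gives no bound on ramification outside the prescribed set, so $L^+/K$ will in general have ramified primes beyond $v_1,\dots,v_n$; your claim ``there are none beyond the $q_i$ by conductor-discriminant control'' is simply false in this setting (and even over $\mathbb{Q}$ it fails once the $I_i$ do not generate $\Gamma$, e.g.\ when $n=0$). If any such extra ramified prime happens to split in $F$, it lands in $S(L/K)$ and ruins the count. The paper's Step~4 handles this explicitly: when choosing the quadratic CM field $F$, one imposes not only that each $v_i$ splits in $F/K$, but also that every prime in $S_{\mathrm{ram}}(L^+/K)\setminus\{v_1,\dots,v_n\}$ is \emph{non-split} in $F/K$. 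This second condition, which you omit, is what forces $S(L/K)=\{v_1,\dots,v_n\}$ exactly.
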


To prove this, we make use of the following, which results from global class field theory:

\begin{thm}[{Grunwald--Wang theorem \cite[(9.2.8)]{NSW}}]
Let $K$ be a number field, $G$ a finite abelian group, and $S$ 
a finite set of primes of $K$.
Suppose that for every $v\in S$ we are given a finite abelian extension 
$L_{v}/K_v$ and an embedding $\Gal(L_v/K_v) \hookrightarrow G$.
Suppose that we are not in the special case (in the sense of \cite[(9.1.5), (9.1.7)]{NSW}).
Then there exist a finite abelian extension $L/K$ and an isomorphism 
$\Gal(L/K) \simeq G$ that realizes the designated local extensions 
for $v \in S$.
\end{thm}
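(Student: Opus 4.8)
This is the Grunwald--Wang theorem; I would follow the classical route via global duality (as in \cite[Ch.~IX]{NSW}). The plan is first to reformulate everything in terms of characters. Since $G$ is abelian, giving a finite abelian extension $L/K$ together with an isomorphism $\Gal(L/K)\simeq G$ is the same as giving a surjective continuous homomorphism $\chi\colon\Gal(\ol{K}/K)\to G$, and ``$\chi$ realizes the prescribed datum at $v\in S$'' means precisely that the restriction of $\chi$ to $\Gal(\ol{K_v}/K_v)$ equals the composite $\Gal(\ol{K_v}/K_v)\twoheadrightarrow\Gal(L_v/K_v)\hookrightarrow G$, which I will call $\chi_v$. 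Viewing $G$ as a trivial Galois module, these homomorphisms are the elements of $H^1(K,G)$, and likewise locally, the relevant map being localization $H^1(K,G)\to\bigoplus_{v\in S}H^1(K_v,G)$. So the task breaks into two parts: (i) produce \emph{some} $\chi\in H^1(K,G)$ with $\chi|_v=\chi_v$ for all $v\in S$, and (ii) arrange that such a $\chi$ can be taken surjective, by enlarging $S$.

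For part (i) I would use Poitou--Tate duality. Let $G^{\vee}=\Hom(G,\mu)$ be the Cartier dual and, for a finite set of places $S$, set
\[
\mathcal{W}_S(G^{\vee})=\Ker\Bigl(H^1(K,G^{\vee})\longrightarrow{\prod_{v\notin S}}\,H^1(K_v,G^{\vee})\Bigr).
\]
By the Poitou--Tate nine-term exact sequence, the image of $H^1(K,G)$ in the restricted product $\bigoplus_v H^1(K_v,G)$ is the exact annihilator of the image of $H^1(K,G^{\vee})$ under the sum of the (perfect) local Tate pairings. Chasing this, one finds that a tuple $(\chi_v)_{v\in S}$ lies in the image of $H^1(K,G)\to\bigoplus_{v\in S}H^1(K_v,G)$ if and only if $\sum_{v\in S}\langle\chi_v,\xi|_v\rangle_v=0$ for every $\xi\in\mathcal{W}_S(G^{\vee})$; in particular this holds unconditionally once $\mathcal{W}_S(G^{\vee})=0$. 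Decomposing $G$ into its $p$-primary parts identifies $G^{\vee}$ with a direct sum of modules $\mu_{p^k}$, so that $\mathcal{W}_S(G^{\vee})$ becomes the corresponding direct sum of the groups
\[
\mathcal{W}_S(\mu_{p^k})=\bigl\{\,x\in K^{\times}/(K^{\times})^{p^k}\ \big|\ x\in(K_v^{\times})^{p^k}\ \text{for all}\ v\notin S\,\bigr\}.
\]

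The main obstacle is to show that $\mathcal{W}_S(\mu_{p^k})=0$ outside the ``special case'' of \cite[(9.1.5), (9.1.7)]{NSW}. If $x\in K^{\times}$ is a $p^k$-th power in $K_v^{\times}$ for all but finitely many $v$, analyzing the Kummer extension $K(\mu_{p^k},x^{1/p^k})/K$ by Chebotarev forces $x^{1/p^k}\in K(\mu_{p^k})$; for $p$ odd, where $[K(\mu_{p^k}):K]$ is prime to $p$, this yields $x\in(K^{\times})^{p^k}$, hence $\mathcal{W}_S(\mu_{p^k})=0$ unconditionally. For $p=2$ the conclusion can genuinely fail --- already $2=(\zeta_{8}+\zeta_{8}^{-1})^{2}$ is a square in $K(\mu_{8})$ while not being one in $\Q$ --- and a careful accounting of which $2$-power roots of unity lie in $K$, of the dyadic primes contained in $S$, and of the local degrees isolates exactly the special case, in which $\mathcal{W}_S(\mu_{2^k})\simeq\Z/2\Z$; outside it the group vanishes. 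This $2$-adic analysis, equivalently the precise delimitation of Wang's counterexample, is the technical heart of the argument, and it is also where one must check that nothing is disturbed by the enlargements of $S$ in part (ii). (For the application in Theorem~\ref{thm:realize}, where $\#\Gamma$ is odd, $G^{\vee}$ has odd order and one is never in the special case, so $\mathcal{W}_S(G^\vee)=0$ is automatic.)

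Finally, for part (ii) I would force surjectivity by a standard auxiliary-prime argument. Write $G=\bigoplus_{i=1}^{m}\Z/e_i\Z$. By Chebotarev, choose distinct primes $w_1,\dots,w_m$ of $K$, none in $S$ and none dividing $2\#G$, with each $w_i$ split completely in $K(\mu_{e_i})$; then $\mu_{e_i}\subset K_{w_i}$, and adjoining an $e_i$-th root of a uniformizer gives a totally ramified cyclic extension of $K_{w_i}$ of degree $e_i$, hence a local character $\psi_{w_i}$ whose image is the $i$-th summand of $G$ and whose inertia already surjects onto that summand. Now apply part (i) to $S'=S\cup\{w_1,\dots,w_m\}$ with the data $(\chi_v)_{v\in S}$ together with $(\psi_{w_i})_i$; the new primes are non-dyadic, so $S'$ is still outside the special case, and one obtains $\chi\in H^1(K,G)$ realizing all of these. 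Then $\chi$ realizes the originally prescribed data on $S$, and since it restricts to the totally ramified $\psi_{w_i}$ at each $w_i$ its image contains the inertia image there, namely the $i$-th summand of $G$; hence $\chi$ is surjective. The abelian extension cut out by $\chi$ is the desired $L/K$.
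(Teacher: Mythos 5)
Your write-up is not really in competition with anything in the paper: the paper gives no proof of this statement, it simply quotes it from \cite[(9.2.8)]{NSW} and then only uses it in the proof of Theorem \ref{thm:realize}, where the exponent of the group is odd so the special case never occurs. What you have written is a reconstruction of the standard argument from \cite{NSW}, Ch.~IX: reduce via Poitou--Tate duality to the vanishing of the obstruction group $\mathcal{W}_S(G^{\vee})$, identify $\mathcal{W}_S(\mu_{p^k})$ by Kummer theory with the group of elements that are local $p^k$-th powers outside $S$ modulo global $p^k$-th powers, invoke the special-case analysis, and force surjectivity with auxiliary totally ramified primes, one for each cyclic factor. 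The overall architecture, the duality chase in part (i), and the auxiliary-prime argument in part (ii) (including the remark that adjoining non-dyadic primes cannot create the special case) are correct in outline.

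There is, however, a concrete gap at the one point where you do supply an argument: the vanishing of $\mathcal{W}_S(\mu_{p^k})$ for odd $p$. Chebotarev does give $x^{1/p^k}\in K(\mu_{p^k})$, but your next step rests on the claim that $[K(\mu_{p^k}):K]$ is prime to $p$, which is false whenever $k\ge 2$ (already $[\Q(\mu_9):\Q]=6$); the norm/degree trick only disposes of $k=1$. The genuine argument, \cite[(9.1.3)--(9.1.11)]{NSW}, analyses when $x^{1/m}\in K(\mu_m)$ forces $x\in (K^{\times})^m$, using that $\Gal(K(\mu_{p^k})/K)$ is \emph{cyclic} for $p$ odd; the failure of cyclicity for $p=2$ is exactly what produces the special case. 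Relatedly, the $2$-adic analysis --- which is precisely where the hypothesis ``not in the special case'' enters the theorem as stated --- is only described, not carried out. So as a self-contained proof the proposal establishes the theorem neither for general $G$ nor, strictly speaking, for odd $p^k$ with $k\ge 2$, unless the cited results of \cite{NSW} \S 9.1 are imported; as a guide to the standard proof it is otherwise sound.
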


\begin{proof}[Proof of Theorem \ref{thm:realize}]
\underline{Step 1.}
First, we construct a totally real field $K$ and distinct 
primes $v_i$ of $K$ ($1 \leq i \leq n$).
The required condition is mild: it is enough to choose them so that there is a surjective homomorphism
\[
\cO_{K_{v_i}}^{\times} \twoheadrightarrow I_i
\]
for each $1 \leq i \leq n$, where $\cO_{K_{v_i}}^{\times}$ denotes the local unit group.
This is possible since, by the definition of $\wtil{\cS}$, for each $1 \leq i \leq n$, there is a prime number $p_i$ such that $I_i$ is $p_i$-elementary.
We may take $v_i$ as a $p_i$-adic prime.

\underline{Step 2.}
We construct a finite abelian extension $L^+_{v_i}/K_{v_i}$ for each $1 \leq i \leq n$.
Let $\wtil{\varphi_i} \in \Gamma$ be a lift of $\varphi_i \in \Gamma/I_i$ and let $D_i \subset \Gamma$ be the subgroup generated by $I_i$ and $\wtil{\varphi_i}$.
Let us choose a uniformizer of $K_{v_i}$, which gives an 
isomorphism $K_{v_i}^{\times} \simeq \cO_{K_{v_i}}^{\times} \times \Z$.
Then, combining the surjective homomorphism $\cO_{K_{v_i}}^{\times} 
\twoheadrightarrow I_i$ in Step 1 with the map $\Z \to D_i$ that sends 
$1$ to $\wtil{\varphi_i}$, we obtain a surjective homomorphism 
$K_{v_i}^{\times} \twoheadrightarrow D_i$.
We define a finite abelian extension $L^+_{v_i}/K_{v_i}$ as the one corresponding to this $K_{v_i}^{\times} \twoheadrightarrow D_i$ via local class field theory.
Then by construction, we have an isomorphism $\Gal(L^+_{v_i}/K_{v_i}) \simeq D_i$ such that the inertia group corresponds to $I_i$ and the Frobenius corresponds to $\varphi_i$.

\underline{Step 3.}
Now we apply the Grunwald--Wang theorem to construct a finite abelian extension $L^+/K$.
We take $S = \{v_1, \dots, v_n\}$ and the local extension for each $v_i$ is $L^+_{v_i}/K_{v_i}$ as in Step 2.
Because the exponent of $\Gamma$ is odd (so not divisible by $4$), we are 
not in ``the special case.''
Therefore, by the Grunwald--Wang theorem, we can construct 
an abelian extension $L^+/K$ and an isomorphism $\Gal(L^+/K) \simeq \Gamma$ such that the localizations at $v_1, \dots, v_n$ are as designated.
Note that this $L^+$ is certainly totally real since the order of $\Gamma$ is odd.

\underline{Step 4.}
We construct a quadratic CM-extension $F/K$ so that the composite field $L = F L^+$ is an extension of $K$ with the desired properties.

Let $S_{\ram}(L^+/K)$ be the set of finite primes of $K$ that are ramified in $L^+$.
We shall construct $F$ satisfying the following:
\begin{itemize}
\item
Each $v_i$ is split in $F/K$ for $1 \leq i \leq n$.
\item
Each $v \in S_{\ram}(L^+/K) \setminus \{v_1, \dots, v_n\}$ does not split in $F/K$.
\end{itemize}
We can find such an $F$ by again using the Grunwald--Wang theorem.
The global Galois group is the cyclic group of order two, so we are not in ``the special case.''
The local extensions for $S_{\ram}(L^+/K)$ are as described above.
The local extensions for archimedean places are all $\mathbb{C}/\R$, so that $F$ is a CM extension of $K$.

Here is a sketch of an alternative construction of $F/K$.
Since it should be a Kummer extension, it is enough to find an element of $K^{\times}$ whose square root generates $F$.
The element should satisfy suitable congruent conditions at primes in $S_{\ram}(L^+/K)$, $2$-adic primes, and archimedean places.
Then the existence follows from the approximation theorem.

Now, by the construction of $F$, if we set $L = F L^+$, we clearly have $S(L/K) = \{v_1, \dots, v_n\}$.
The inertia group and the Frobenius at each $v_i$ are $(I_i, \varphi_i)$ as required, because of the construction of $L^+$ in Steps 2--3 ($F$ does not affect them since $v_i$ is split in $F/K$).
This completes the proof of Theorem \ref{thm:realize}.
\end{proof}

\begin{rem}\label{rem:K_recipe}
In the proof of Theorem \ref{thm:realize}, Step 1 tells us a recipe for the construction of the base field $K$.
If $\Gamma$ is cyclic, then each $I_i$ is also cyclic, so we may take $K = \Q$, thanks to the theorem on arithmetic progressions (cf. Theorem \ref{thm:p_real}).
On the other hand, if $\Gamma$ is not cyclic, we cannot take a uniform $K$ that satisfies Theorem \ref{thm:realize} for all families $\{(I_i, \varphi_i)\}_i$.
\end{rem}

\section{Rephrasing the problem on $\cZ^{\adm}$}\label{sec:6}

In this section, we show Theorem \ref{thm:beta}, which describes the structure of $\cZ^{\adm}$.
It will be a key step to prove Theorem \ref{thm:main}.

\subsection{The key theorem}\label{ss:adm_theorem}

Let $\Gamma$ be a finite abelian group.
In what follows we do not assume that the order of $\Gamma$ is odd and work over $\Z[\Gamma]$ instead of $\Z'[\Gamma]$, which simply widens the scope of the argument.
Let $\Cmodsim$ be the monoid associated to the ring $\Z[\Gamma]$.
As in Definition \ref{defn:Zadm}, we re-define
\[
A_{I, \varphi} := \Z[\Gamma/I]/(1 - \varphi^{-1} + \# I)
\]
(so the former one is recovered by the base-change to $\Z'$ from $\Z$), and then define the submonoid $\cZ^{\adm} \subset \Cmodsim$ in the same way.
We will study the structure of $\cZ^{\adm}$.

\begin{defn}\label{defn:sets}
We define various sets as follows:
\begin{itemize}
\item[(1)]
Let $\cS$ be the set of pairs $(I, D)$, where
\begin{itemize}
\item
	$I \subset D \subset \Gamma$ are subgroups,
   \item
         $I$ is non-trivial,
   \item
	$I$ is an elementary group, and
   \item
         $D/I$ is cyclic.
\end{itemize}
\item[(2)]
For each prime $p$, we write $\Gamma_p$ for the maximal $p$-quotient of $\Gamma$.
Let $\cS_p$ be the set of pairs $(I_p^*, D_p^*)$ 
such that $I_p^* \subset D_p^* \subset \Gamma_p$ are subgroups 
satisfying
\begin{itemize}
\item
$I_p^*$ is non-trivial and 
\item
$D_p^*/I_p^*$ is cyclic.
\end{itemize}
In other words, $\cS_p$ is defined just
as $\cS$, for $\Gamma_p$ instead of $\Gamma$.
Note that $\cS_p = \emptyset$ unless $p \mid \# \Gamma$.
\item[(3)]
Let $\cT$ be the set 
of tuples $(p, H, I_p^*, D_p^*)$ such that
\begin{itemize}
\item
$p$ is a prime number (necessarily a prime divisor of $\# \Gamma$),
\item
$H \subset \Gamma$ is a subgroup such that $\Gamma/H$ is cyclic of order prime to $p$, and
\item
$(I_p^*, D_p^*) \in \cS_p$.
\end{itemize}
\end{itemize}
\end{defn}

\begin{defn}
We define a monoid homomorphism
\[
\beta: \bN^{\cS} \to \bN^{\cT}
\]
by
\[
\beta((I, D)) = \sum_{\substack{D \subset H \\ I_p = I_p^* \\ D_p = D_p^*}} (p, H, I_p^*, D_p^*)
\]
for each $(I, D) \in \cS$, where the sum runs over $(p, H, I_p^*, D_p^*) \in \cT$ satisfying $D \subset H$, $I_p = I_p^*$, and $D_p = D_p^*$.
\end{defn}

Now we can state the key theorem, whose proof will be given in the rest of this section.

\begin{thm}\label{thm:beta}
The monoid $\cZ^{\adm}$ is isomorphic to the image of $\beta: \bN^{\cS} \to \bN^{\cT}$.
\end{thm}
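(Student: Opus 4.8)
The plan is to produce an explicit chain of monoid isomorphisms
\[
\cZ^{\adm} \;\simeq\; \bN^{\wtil{\cS}}/{\approx} \;\simeq\; \bN^{\cS} \;\xrightarrow{\ \beta\ }\; \operatorname{Im}(\beta),
\]
where $\approx$ is the congruence generated by the defining relations among the generators $[\omega^1(A_{I,\varphi})]$, and then to identify that congruence with the kernel congruence of $\beta$. I would first reduce to the study of lattices: by Lemma~\ref{commsq} and Remark~\ref{rem:shift}, $\cZ^{\adm}$ is isomorphic to the submonoid of $\Lat^{\pe}$ generated by the classes $\Phi(\omega^1(A_{I,\varphi}))$, and these lattices can be written down as ideals $(N_I,\ 1-\wtil{\varphi}^{-1}+\#I)$ of $\Z[\Gamma]$ exactly as in Theorem~\ref{first_main} (the cyclic hypothesis there was used only to make $\Omega$ computable, but the ideal description of $\Omega((\tau-1,\wtil g))$ goes through for a general elementary $I$, using a presentation of the augmentation ideal of $\Z[I]$; the norm element $N_I$ replaces $\tau-1$ appropriately). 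So the question becomes: when do finite direct sums of these ideals become projectively equivalent?

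The second step is to decompose everything prime by prime. Since projective equivalence of lattices over $\Z[\Gamma]$ can be checked after tensoring with $\Z_p$ for each $p \mid \#\Gamma$ (a lattice is projective iff it is locally free at every $p$, and two lattices are projectively equivalent iff they are so at every $p$), I would analyze the $\Z_p[\Gamma]$-lattice $\Z_p \otimes (N_I,\ 1-\wtil\varphi^{-1}+\#I)$. Here $\Z_p[\Gamma] = \Z_p[\Gamma_p][\Delta]$ where $\Delta$ is the prime-to-$p$ part of $\Gamma$, and $\Z_p[\Delta]$ splits as a product of unramified extensions of $\Z_p$, indexed by the $\Q_p$-rational characters of $\Delta$, equivalently by subgroups $H \supset $ (the $p$-part) with $\Gamma/H$ cyclic of order prime to $p$ — this is precisely the data labelling $\cT$. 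On each such component the ideal $(N_I,\dots)$ becomes (up to projective equivalence, i.e. up to free summands) a module depending only on the images $I_p = I_p^*$ and $\varphi_p = D_p^*/I_p^*$ of the data in $\Gamma_p$: the factor $\#I$ is a $p$-power $\#I_p$ up to a $p$-adic unit after localizing, $N_I$ becomes $N_{I_p}$ up to a unit, and $1-\wtil\varphi^{-1}$ depends only on $\varphi_p$. This is what forces the target indexing set $\cT$ and the shape of $\beta$: the generator indexed by $(I,\varphi) \in \wtil\cS$ — equivalently by $(I,D) \in \cS$ with $D = \langle I, \wtil\varphi\rangle$ — maps to the sum over all $\cT$-components $(p,H,I_p^*,D_p^*)$ on which it has a nonzero (non-projective) contribution, namely those with $D \subset H$, $I_p = I_p^*$, $D_p = D_p^*$; on those components it contributes exactly one copy of a fixed indecomposable lattice.

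The third step is to check that $\beta$ is the \emph{right} map, i.e. that the classes $\Phi(\omega^1(A_{I,\varphi}))$ satisfy no relations beyond those imposed by $\beta$, and conversely that $\beta$ is realized monoid-theoretically. For the "no further relations" direction I would argue that on each $\cT$-component the relevant local lattice is indecomposable and that distinct components are independent in $\Lat^{\pe}$, so the only way a direct sum of generators can be projectively trivial, or two such sums can agree, is componentwise; this is exactly saying $\sum n_{(I,D)}[\Phi(\omega^1(A_{I,\varphi}))] = \sum n'_{(I,D)}[\cdots]$ iff $\beta(\sum n_{(I,D)} (I,D)) = \beta(\sum n'_{(I,D)} (I,D))$ in $\bN^{\cT}$, which gives the isomorphism $\cZ^{\adm} \simeq \operatorname{Im}(\beta)$. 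I would be careful to pass from $\cS$ (pairs $(I,D)$) back to $\wtil\cS$ (pairs $(I,\varphi)$): the map $(I,\varphi)\mapsto(I,\langle I,\wtil\varphi\rangle)$ is not injective, but $A_{I,\varphi}$ up to the relevant localizations depends only on $(I,D)$ together with the $\varphi_p$'s, and one checks the fibers of $\beta$ already collapse exactly this ambiguity, so using $\bN^{\cS}$ as the source is harmless and in fact more natural.

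\textbf{Main obstacle.} The delicate point is the local computation in the second and third steps: showing that, after tensoring with $\Z_p$ and projecting to each component of $\Z_p[\Delta]$, the ideal $(N_I,\ 1-\wtil\varphi^{-1}+\#I)$ is, up to adding free modules, an \emph{indecomposable} lattice depending only on $(p,H,I_p^*,D_p^*)$, and that these indecomposables are pairwise non-projectively-equivalent and "independent" across components. Concretely one must understand $\Omega$ of the ideal $(N_{I_p},\ 1-\wtil\varphi_p^{-1}+\#I_p)$ over $\Z_p[\Gamma_p]$ well enough to see it has no free summand and cannot be further decomposed — this is a genuine module-theoretic input about cyclic-quotient lattices over $p$-group rings, and it is the step where the hypotheses "$D/I$ cyclic" and "$I$ elementary" are really used. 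Everything else (the bookkeeping identifying the index sets, the passage $\Phi$/$\omega$/$\Omega$, the prime-by-prime reduction) is formal once that local statement is in hand.
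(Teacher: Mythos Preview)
Your overall architecture matches the paper's: localize $\cC/\modsim$ at each pair $(p,\chi)$ (Proposition~\ref{prop:localization}), identify the resulting components with the index set $\cT$, and track where each generator lands. The bookkeeping you sketch for passing from $\wtil\cS$ to $\cS$ and for assembling $\beta$ out of the componentwise maps is exactly what the paper does in \S\ref{ss:red_loc}. But two of your technical steps have genuine gaps.

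First, the claim that Theorem~\ref{first_main} extends to non-cyclic $I$ with the same output ideal $(N_I,\,1-\wtil\varphi^{-1}+\#I)$ is not justified and is almost certainly false. The proof there computes $\Omega$ of $(\tau-1,\wtil g)$ using that $I=\langle\tau\rangle$ is cyclic; for non-cyclic $I$ the augmentation ideal requires several generators, its first syzygy is no longer an ideal, and since only cyclic groups have periodic cohomology the Heller loop does not return you to a rank-one lattice. (Your parenthetical ``$N_I$ replaces $\tau-1$'' conflates the norm with a generator of the augmentation ideal.) The paper sidesteps this entirely: by Remark~\ref{rem:shift} one may replace $\omega^{1}$ by $\omega^{-1}$, and the lattice $\cL_{I,\varphi}=(\nu_I,\,1-\tfrac{\nu_I}{\#I}\varphi^{-1})$ attached to $\omega^{-1}(A_{I,\varphi})$ is computed in Proposition~\ref{prop:L_explicit} via the snake lemma, with no hypothesis on $I$. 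This switch from $\omega^{1}$ to $\omega^{-1}$ is the key trick you are missing.

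Second, the local lattice on a nonzero $\cT$-component is \emph{not} always indecomposable: Proposition~\ref{prop:indec} shows $({}_p\cL_{I,\varphi})_\chi$ is either indecomposable or splits as $\Z_p[\Gamma/I]_\chi \oplus (\Z_p[\Gamma]/(\nu_I))_\chi$, and neither summand is free. So ``indecomposable and independent across components'' is not the right invariant. The paper proves instead the trichotomy of Proposition~\ref{prop:equiv_cri}: two nonzero local lattices share a common direct summand iff they are isomorphic iff $(I_p,D_p)=(I'_p,D'_p)$. The implication that recovers $(I_p,D_p)$ from the equivalence class is obtained by computing Tate cohomology $\hat H^0(H,({}_pA_{I,\varphi})_\chi)$ for varying subgroups $H\subset\Gamma$, not by a decomposition argument; the implication handling the possibly split case uses the dichotomy above together with a comparison of character supports. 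Once Propositions~\ref{prop:free} and~\ref{prop:equiv_cri} are in hand, the rest of your outline is correct and coincides with the paper's conclusion.
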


The image of $\beta$ will be studied in \S \ref{sec:7}, which results in Theorem \ref{thm:main}.
For now, let us consider the case where $\Gamma$ is a $p$-group.

\begin{cor}\label{cor:beta_p}
Suppose $\Gamma$ is a $p$-group for some prime number $p$.
Then $\cZ^{\adm}$ is a free monoid of rank $\# \cS = \# \cT$.
\end{cor}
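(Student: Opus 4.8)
The plan is to apply Theorem~\ref{thm:beta} and unwind all the combinatorial definitions in the special case where $\Gamma$ is a $p$-group; the entire proof then reduces to checking that $\beta \colon \bN^{\cS} \to \bN^{\cT}$ is an isomorphism of monoids, after which $\cZ^{\adm} \cong \Imag(\beta) = \bN^{\cT}$ is visibly free of rank $\# \cT = \# \cS$.

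First I would record the structural simplifications. Since $\Gamma$ is a $p$-group, its maximal $p$-quotient is $\Gamma_p = \Gamma$, and every non-trivial subgroup $I \subset \Gamma$ is again a $p$-group, hence $p$-elementary, hence elementary; so the elementariness condition in the definition of $\cS$ is vacuous and $\cS$ coincides with $\cS_p$ on the nose. Next, in a tuple $(p', H, I_{p'}^*, D_{p'}^*) \in \cT$ the prime $p'$ must divide $\# \Gamma$, so $p' = p$; and $H \subset \Gamma$ must satisfy that $\Gamma/H$ is cyclic of order prime to $p$, while $\Gamma/H$ is also a $p$-group, forcing $\Gamma/H = 1$, i.e. $H = \Gamma$. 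Thus $(p, \Gamma, I_p^*, D_p^*) \mapsto (I_p^*, D_p^*)$ is a bijection $\cT \xrightarrow{\sim} \cS_p = \cS$, which already gives $\# \cS = \# \cT$.

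Then I would compute $\beta$ on the canonical basis. For $(I, D) \in \cS$ the images of $I$ and $D$ in $\Gamma_p = \Gamma$ are $I$ and $D$ themselves, and the sum defining $\beta((I,D))$ runs over tuples $(p', H, I_{p'}^*, D_{p'}^*) \in \cT$ with $D \subset H$, $I_{p'} = I_{p'}^*$, and $D_{p'} = D_{p'}^*$. By the previous paragraph this forces $p' = p$, $H = \Gamma$, $I_p^* = I$, $D_p^* = D$, so the sum has exactly one term and $\beta((I,D)) = (p, \Gamma, I, D)$. Hence $\beta$ carries the canonical basis of $\bN^{\cS}$ bijectively onto the canonical basis of $\bN^{\cT}$, so it is a monoid isomorphism and $\Imag(\beta) = \bN^{\cT}$. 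Finally, Theorem~\ref{thm:beta} gives $\cZ^{\adm} \cong \Imag(\beta) = \bN^{\cT}$, a free monoid on $\cT$ of rank $\# \cT = \# \cS$.

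There is essentially no obstacle here: the only input with genuine content is Theorem~\ref{thm:beta}, and everything else is a direct translation of the definitions, resting on the two elementary facts that a non-trivial $p$-subgroup is automatically elementary and that a $p$-group admits no non-trivial cyclic quotient of order prime to $p$.
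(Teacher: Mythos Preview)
Your proof is correct and follows exactly the same approach as the paper: identify $\cS = \cS_p = \cT$ via $(I,D) \leftrightarrow (p,\Gamma,I,D)$, observe that $\beta$ becomes the identity under this identification, and invoke Theorem~\ref{thm:beta}. You have simply spelled out in more detail the reasons why elementariness is automatic and why $H=\Gamma$ is forced, which the paper leaves implicit.
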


\begin{proof}
By identifying $\Gamma = \Gamma_p$, we have $\cS = \cS_p$.
Moreover, we have $\cS = \cT$ by identifying $(I, D)$ with $(p, \Gamma, I, D)$.
The map $\beta$ is then the identity map.
As a consequence, we obtain the corollary.
\end{proof}

\begin{eg}
Suppose that $\Gamma$ is cyclic of order $p^r$.
Then the choice of $I$ and $D$ is
\[
I = p^i \Gamma, \quad D = p^j \Gamma
\]
with $0 \leq i \leq r-1$ and $0 \leq j \leq i$.
Therefore, we have
\[
\# \cS = \sum_{i = 0}^{r-1} (i+1) = \frac{1}{2} r(r+1).
\]
\end{eg}

\subsection{Reduction to consideration over local rings}\label{ss:red_loc}

For each prime $p$, the ring $\Z_p[\Gamma]$ is decomposed as a product of local rings
\[
\Z_p[\Gamma] \simeq \prod_{\chi} \cO_{\chi}[\Gamma_p],
\]
where $\chi$ runs over a set of representatives
of the characters of $\Gamma$ of order prime to $p$,
modulo $\mathbb Q_p$-conjugacy.
Here, recall that $\Gamma_p$ denotes the maximal $p$-quotient of $\Gamma$.
We will also write $I_p$ and $D_p$ for the maximal $p$-quotient of $I$ and $D$, respectively.
Let $\cC_{p, \chi}$ be the category of finite $\cO_{\chi}[\Gamma_p]$-modules.

For each $(p, \chi)$, as we reviewed in \S \ref{ss:eq_lat},
we have a monoid injective homomorphism
\[
\Phi: (\cC_{p, \chi}) / \modsim \hookrightarrow \Lat_{\cO_{\chi}[\Gamma_p]}^{\pe}.
\]
Moreover, \cite[Theorem 5.2]{GK} implies that $\Lat_{\cO_{\chi}[\Gamma_p]}^{\pe}$ is free on the set of indecomposable $\cO_{\chi}[\Gamma_p]$-lattices that are not projective (i.e., free).
Note that this is true since $\cO_{\chi}[\Gamma_p]$ is a henselian local ring, so the theorem of Krull--Remak--Schmidt--Azumaya holds.

\begin{prop}\label{prop:localization}
The natural monoid homomorphism
\[
\cC / \modsim \to \bigoplus_{p, \chi} (\cC_{p, \chi}) / \modsim
\]
is an isomorphism.
\end{prop}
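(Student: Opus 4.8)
The plan is to build the map and its inverse directly on the level of categories, and then pass to equivalence classes. The ring decomposition $\Z[\Gamma] \otimes_{\Z} \Z_p \simeq \prod_{\chi} \cO_{\chi}[\Gamma_p]$ (product over $\Q_p$-conjugacy classes of prime-to-$p$ characters $\chi$ of $\Gamma$, as recalled in \S\ref{ss:red_loc}) gives, after also decomposing over all primes $p$, a functorial product decomposition of the category of finite $\Z[\Gamma]$-modules: any finite $\Z[\Gamma]$-module $M$ decomposes uniquely as $M \simeq \bigoplus_{p}\bigoplus_{\chi} M_{p,\chi}$, where $M_{p,\chi}$ is the $\cO_{\chi}[\Gamma_p]$-component of the $p$-primary part of $M$. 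This assignment $M \mapsto (M_{p,\chi})_{p,\chi}$ is an equivalence of categories between $\cC$ and $\prod_{p,\chi} \cC_{p,\chi}$ (only finitely many components are nonzero). First I would record this categorical equivalence carefully, noting that it is additive (it commutes with finite direct sums) and that it matches up projective objects: a finite $\Z[\Gamma]$-module lies in $\cP$ (i.e.\ is $\Gamma$-c.t.) if and only if each $M_{p,\chi}$ lies in the corresponding $\cP_{p,\chi}$, because cohomological triviality over $\Z[\Gamma]$ is detected prime-by-prime, and on each $p$-primary part it is detected $\chi$-component-by-$\chi$-component since $\cO_{\chi}[\Gamma_p]$ is a direct factor of $\Z_p[\Gamma]$.

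The second step is to check that this equivalence descends to a bijection on $\modsim$-classes. The key point is that the defining data of the equivalence relation $\sim$ — three-step filtrations with top and bottom quotients in $\cP$ — is entirely compatible with the product decomposition. Concretely: a submodule of $M$ corresponds to a compatible family of submodules of the $M_{p,\chi}$; a three-step filtration $0 \subset M' \subset M'' \subset M$ with $M/M'' , M' \in \cP$ decomposes into three-step filtrations of the components with the analogous property; the filling $M''/M'$ decomposes as $\bigoplus_{p,\chi}(M''_{p,\chi}/M'_{p,\chi})$; and an $R$-module isomorphism $M \simeq N$ decomposes into $\cO_{\chi}[\Gamma_p]$-module isomorphisms $M_{p,\chi} \simeq N_{p,\chi}$ (here one uses that the decomposition is functorial, so any isomorphism of $\Z[\Gamma]$-modules respects it). Running this in both directions shows $X \sim Y$ in $\cC$ if and only if $X_{p,\chi} \sim Y_{p,\chi}$ in $\cC_{p,\chi}$ for all $(p,\chi)$. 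Hence $X \mapsto ([X_{p,\chi}])_{p,\chi}$ is a well-defined injection $\Cmodsim \to \bigoplus_{p,\chi}(\cC_{p,\chi})/\modsim$, and it is surjective because any family $(X_{p,\chi})$ (with finitely many nonzero terms) is the image of $\bigoplus_{p,\chi} X_{p,\chi}$. Finally, additivity of the decomposition shows this bijection is a monoid homomorphism, hence an isomorphism of monoids; and by inspection it is the natural map in the statement (the one induced by base change $\Z[\Gamma] \to \cO_{\chi}[\Gamma_p]$, which on a module $M$ picks out $M_{p,\chi}$).

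I expect the main obstacle to be purely bookkeeping rather than conceptual: one must be careful that the various reductions — from $\Z[\Gamma]$-modules to $\Z_p[\Gamma]$-modules via $p$-primary decomposition, and then from $\Z_p[\Gamma]$-modules to $\cO_{\chi}[\Gamma_p]$-modules via the idempotent decomposition of $\Z_p[\Gamma]$ — are genuinely functorial, so that \emph{every} $R$-homomorphism (not just a chosen one) respects them; this is what makes the ``not assumed to respect the filtrations'' clause in the definition of $\sim$ harmless. Once functoriality is in hand, the compatibility of $\cP$ with the decomposition is the only other substantive point, and it follows from the standard fact that a finite $\Z[\Gamma]$-module is cohomologically trivial if and only if all its localizations at primes dividing $\#\Gamma$ are, combined with the fact that an idempotent-cut piece of a c.t.\ module over $\Z_p[\Gamma]$ is c.t.\ (being a direct summand).
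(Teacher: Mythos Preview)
Your proposal is correct and follows essentially the same approach as the paper: decompose a finite $\Z[\Gamma]$-module into its $p$-primary parts and then into $\chi$-components, observe that this is functorial and matches up the subcategories $\cP$, and conclude that the equivalence relation $\sim$ passes through the decomposition. The paper's proof records exactly these ingredients (the decomposition $X \simeq \bigoplus_p \Z_p \otimes_\Z X$, the further splitting into $\chi$-components, and the fact that $X$ is $\Gamma$-c.t.\ iff all components are) and then simply asserts that ``these observations imply the proposition''; your write-up just spells out the bookkeeping that the paper leaves implicit.
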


\begin{proof}
For each $X \in \cC$, since $X$ is finite, $\Z_p \otimes_{\Z} X$ 
is identified with the $p$-Sylow subgroup of $X$ and we have
\[
X \simeq \bigoplus_p (\Z_p \otimes_{\Z} X).
\]
Moreover, any $\Z_p[\Gamma]$-module $Y$ is a direct 
sum of its $\chi$-components $Y_{\chi} := \cO_{\chi}[\Gamma_p] 
\otimes_{\Z_p[\Gamma]} Y$.
In addition, $X$ is $\Gamma$-c.t.~if and only if so are 
all its components.
These observations imply the proposition.
\end{proof}

For a prime number $p$, let us put
\[
{}_p A_{I, \varphi} = \Z_p \otimes_{\Z} A_{I, \varphi}
= \Z_p[\Gamma/I]/(1 - \varphi^{-1} + \# I).
\]
Now we are forced to study the relation among $({}_p A_{I, \varphi})_{\chi}$ (or equivalently among the associated lattices) for various $(I, \varphi) \in \wtil{\cS}$.

\subsection{Reduction to two propositions}

For $(I, \varphi) \in \wtil{\cS}$, define $D \subset \Gamma$ 
to be the subgroup generated by $I$ and a lift of $\varphi$; consequently, 
$\varphi$ generates $D/I$.
The proof of the following two propositions will be given later.

\begin{prop}\label{prop:free}
Let $p$ be a prime number and $\chi$ a character of $\Gamma$ 
whose order is prime to $p$. The following are equivalent:
\begin{itemize}
\item[(i)]
We have $({}_p A_{I, \varphi})_{\chi} \sim 0$.
\item[(ii)]
$I_p$ is trivial or $\chi$ is non-trivial on $D$.
\end{itemize}
\end{prop}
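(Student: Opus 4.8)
The plan is to make the $\chi$-component $({}_p A_{I, \varphi})_\chi$ completely explicit as a cyclic module over the local ring $\cO_\chi[\Gamma_p]$, and then to read off from that presentation whether it has projective dimension at most one over $\cO_\chi[\Gamma_p]$ --- which, by the definition of $\cP$ and the remark that $M \sim 0 \iff M \in \cP$, is the same as $({}_p A_{I, \varphi})_\chi \sim 0$. Write ${}_p A_{I, \varphi} = \Z_p[\Gamma]/\mathfrak{b}$, where $\mathfrak{b}$ is generated by the elements $\sigma - 1$ for $\sigma \in I$ together with (a lift of) $g := 1 - \varphi^{-1} + \# I$. In the decomposition $\Z_p[\Gamma] \simeq \prod_\chi \cO_\chi[\Gamma_p]$ an element $\gamma \in \Gamma$ acts on the $\chi$-factor as $\chi(\gamma)\bar\gamma$, where $\bar\gamma$ is the image of $\gamma$ in $\Gamma_p$; hence $({}_p A_{I, \varphi})_\chi = \cO_\chi[\Gamma_p]/\mathfrak{b}_\chi$, with $\mathfrak{b}_\chi$ generated by $\chi(\sigma)\bar\sigma - 1$ for $\sigma \in I$ and by the image of $g$. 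Three elementary facts will be used throughout: ($\alpha$) $\chi$ is trivial on the $p$-Sylow subgroup of every subgroup of $\Gamma$, since $\chi$ has order prime to $p$; ($\beta$) if $\zeta \ne 1$ is a root of unity of order prime to $p$, then $1 - \zeta$ is a unit of $\cO_\chi$; ($\gamma$) for $\psi \in \Gamma_p$, the element $\psi - 1$ lies in the maximal ideal of $\cO_\chi[\Gamma_p]$, and similarly over $\cO_\chi[\Gamma_p/I_p]$.

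I would then run the following case distinction. If $\chi$ is non-trivial on $I$, then by ($\alpha$) it is non-trivial on the prime-to-$p$ part of $I$, so some $\sigma \in I$ of order prime to $p$ has $\chi(\sigma) \ne 1$; then $\bar\sigma = 1$, so $\chi(\sigma) - 1 \in \mathfrak{b}_\chi$ is a unit by ($\beta$), and $({}_p A_{I, \varphi})_\chi = 0 \sim 0$ (here $\chi$ is non-trivial on $D \supseteq I$, consistent with (ii)). Now suppose $\chi|_I$ is trivial, so that $\chi$ descends to $\Gamma/I$ and $\chi(\varphi)$ is well-defined, and $\mathfrak{b}_\chi$ contains $\tau - 1$ for all $\tau \in I_p$; thus $({}_p A_{I, \varphi})_\chi = \cO_\chi[\Gamma_p/I_p]/(\bar g)$ with $\bar g := 1 + \# I - \chi(\varphi)^{-1}\bar\varphi^{-1}$, where $\bar\varphi$ is the image of $\varphi$ in $\Gamma_p/I_p$. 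If moreover $I_p$ is trivial (equivalently $p \nmid \# I$), then $\Gamma_p/I_p = \Gamma_p$ and every character $\eta$ of $\Gamma_p$ sends $\bar g$ to $1 + \# I$ minus a root of unity, which is nonzero since a root of unity has archimedean absolute value $1 < 1 + \# I$; hence $\bar g$ is a non-zero-divisor in $\cO_\chi[\Gamma_p]$, and $0 \to \cO_\chi[\Gamma_p] \xrightarrow{\bar g} \cO_\chi[\Gamma_p] \to ({}_p A_{I, \varphi})_\chi \to 0$ exhibits projective dimension at most one, so $({}_p A_{I, \varphi})_\chi \sim 0$. Thus whenever $I_p$ is trivial we obtain $\sim 0$, as predicted by (ii).

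It remains to treat the case $\chi|_I$ trivial and $I_p$ non-trivial (so $p \mid \# I$). Here I reduce $\bar g = 1 + \# I - \chi(\varphi)^{-1}\bar\varphi^{-1}$ modulo the maximal ideal $\mathfrak{m}$ of the local ring $\cO_\chi[\Gamma_p/I_p]$: by ($\gamma$), $\bar\varphi \equiv 1$, and $\# I \equiv 0$ since $p \mid \# I$, so $\bar g \equiv 1 - \chi(\varphi)^{-1} \pmod{\mathfrak{m}}$. If $\chi$ is non-trivial on $D$, then $\chi(\varphi) \ne 1$, so $\bar g$ is a unit by ($\beta$) and $({}_p A_{I, \varphi})_\chi = 0 \sim 0$. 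If $\chi$ is trivial on $D$, then $\chi(\varphi) = 1$, so $\bar g \equiv 0 \pmod{\mathfrak{m}}$: the module $M := ({}_p A_{I, \varphi})_\chi = \cO_\chi[\Gamma_p/I_p]/(\bar g)$ is then nonzero and finite, and $I_p$ acts trivially on it because the composite $I_p \hookrightarrow \Gamma_p \to \Gamma_p/I_p$ is trivial. If $M$ had projective dimension at most one over $\cO_\chi[\Gamma_p]$, then restricting a length-one free resolution along $\cO_\chi[I_p] \hookrightarrow \cO_\chi[\Gamma_p]$ (which preserves freeness) would give $\hat H^0(I_p, M) = 0$; but with trivial action $\hat H^0(I_p, M) = M/(\# I_p)M$, which is nonzero by Nakayama since $\# I_p$ is a power of $p$ --- a contradiction. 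Hence $({}_p A_{I, \varphi})_\chi \not\sim 0$ precisely when $I_p$ is non-trivial and $\chi$ is trivial on $D$ (note the latter forces $\chi|_I$ trivial), which is exactly the negation of (ii); this proves the equivalence. The only step carrying any real subtlety is this last non-triviality argument, and the restriction to $I_p$ combined with the trivial-action Tate-cohomology computation is the route I would take.
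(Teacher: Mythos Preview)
Your argument is correct. Both your proof and the paper's hinge on Tate cohomology for the non-triviality direction, but the execution differs. The paper first establishes a general lemma computing $\hat H^0(H, A_{I,\varphi}) \simeq \hat H^{-1}(H, A_{I,\varphi}) \simeq \Z[\Gamma/(D+H)]/(\#(I\cap H))$ for \emph{all} subgroups $H$, and then specializes to $H = I$ to see that $\hat H^0(I, ({}_p A_{I,\varphi})_\chi) \simeq \Z_p[\Gamma/D]_\chi/(\# I_p)$ is nonzero when (ii) fails. For the converse it argues abstractly: if $I_p$ is trivial then $\Z_p[\Gamma/I]$ is already $\Gamma$-c.t., and if $\chi$ is non-trivial on $D$ then the $\chi$-component vanishes outright. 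You instead make the cyclic presentation $({}_p A_{I,\varphi})_\chi = \cO_\chi[\Gamma_p/I_p]/(\bar g)$ fully explicit and analyze the single generator $\bar g$ case by case, extracting the non-zero-divisor property directly and using $\hat H^0(I_p,-)$ only at the end. The paper's route has the advantage that the same cohomology lemma is immediately reused to prove the implication (i) $\Rightarrow$ (iii) of the next proposition (recovering $I_p$ and $D_p$ from the cohomology groups as $H$ varies); your route is more self-contained and yields the sharper byproduct that the $\chi$-component is actually \emph{zero}, not just c.t., whenever $\chi$ is non-trivial on $D$.
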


For $(I, \varphi) \in \wtil{\cS}$, let us define $\cL_{I, \varphi} \in \Lat_{\Z[\Gamma]}^{\pe}$ as
the lattice associated to $\omega^{-1}(A_{I, \varphi}) \in \Cmodsim$.
The reason why we consider $\omega^{-1}$ (instead of $\omega^1$) will be explained later.

\begin{prop}\label{prop:equiv_cri}
Let $(I, \varphi)$ and $(I', \varphi')$ be two elements of $\wtil{\cS}$.
Let $p$ be a prime number and $\chi$ a character of $\Gamma$ whose order is prime to $p$.
Suppose that $({}_p A_{I, \varphi})_{\chi} \not\sim 0$ and $({}_p A_{I', \varphi'})_{\chi} \not\sim 0$.
Then the following are equivalent:
\begin{itemize}
\item[(i)]
$({}_p \cL_{I, \varphi})_{\chi} \sim_{\pe} ({}_p \cL_{I', \varphi'})_{\chi}$.
\item[(ii)]
$({}_p \cL_{I, \varphi})_{\chi}$ and $({}_p \cL_{I', \varphi'})_{\chi}$
have a common (nonzero) direct summand in $\Lat_{\cO_{\chi}[\Gamma_p]}^{\pe}$.
\item[(iii)]
We have $I_p = I'_p$ and $D_p = D'_p$.
\end{itemize}
\end{prop}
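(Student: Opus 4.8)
The strategy is to reduce everything to an explicit computation of the lattices $({}_p \cL_{I, \varphi})_{\chi}$ over the local ring $\cO_{\chi}[\Gamma_p]$, and then invoke the Krull--Remak--Schmidt--Azumaya theorem so that ``projective equivalence'' and ``sharing a common direct summand'' become transparent. First I would recall that, by Lemma \ref{commsq} together with Theorem \ref{first_main} (and its cyclic-group analysis, or rather the general analogue of the computation there), the lattice $\cL_{I, \varphi}$ associated to $\omega^{-1}(A_{I, \varphi})$ can be written down: up to projective equivalence it is the kernel of a presentation of the ideal $(N_I, 1 - \wtil{\varphi}^{-1} + \# I)$, and applying $\Omega^{-1}$ (which is why $\omega^{-1}$ rather than $\omega^1$ is used — one avoids an extra $\Omega$ and gets directly a manageable module, presumably the quotient-type lattice rather than an ideal) one lands on a concrete $\Z[\Gamma]$-lattice depending only on $I$ and $D$. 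After base-changing to $\cO_{\chi}[\Gamma_p]$, the norm $N_I$ becomes $\# I_p$ times a unit on the component where $\chi|_I$ is trivial, and kills the component where $\chi|_I$ is nontrivial; combined with Proposition \ref{prop:free}, on the surviving component the lattice depends only on $(I_p, D_p)$ and in fact only on the pair of subgroups $I_p \subset D_p$ of $\Gamma_p$.

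The implications $(i) \Rightarrow (ii)$ and $(iii) \Rightarrow (i)$ are the easy ones. For $(iii) \Rightarrow (i)$: if $I_p = I'_p$ and $D_p = D'_p$, then the explicit description shows $({}_p \cL_{I, \varphi})_{\chi}$ and $({}_p \cL_{I', \varphi'})_{\chi}$ are literally isomorphic (the character $\chi$, the prime $p$, and the pair of subgroups $I_p \subset D_p$ are all the data that enter), hence certainly projectively equivalent. For $(i) \Rightarrow (ii)$: since both lattices are nonzero and non-projective by the hypothesis $({}_p A)_{\chi} \not\sim 0$ and Proposition \ref{prop:free}, projective equivalence over the local ring $\cO_{\chi}[\Gamma_p]$ forces — via Krull--Remak--Schmidt--Azumaya, exactly as invoked after \cite[Theorem 5.2]{GK} — that they have the same non-projective indecomposable summands with the same multiplicities, so in particular they share a common nonzero direct summand.

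The substantive implication, and the main obstacle, is $(ii) \Rightarrow (iii)$: from the mere existence of a shared nonzero indecomposable summand one must recover both $I_p$ and $D_p$. The plan here is to extract $I_p$ and $D_p$ as \emph{invariants} of the lattice $({}_p \cL_{I, \varphi})_{\chi}$, read off from its structure over $\cO_{\chi}[\Gamma_p]$ — for instance, $I_p$ should be recoverable as (the $p$-part of) the inertia-type subgroup acting trivially, detectable from the annihilator or from the rank of the lattice, while $D_p$ (equivalently the cyclic quotient $D_p/I_p$) should be recoverable from the decomposition-type data, e.g. which characters of $\Gamma_p$ occur in $\Q_p \otimes ({}_p \cL_{I, \varphi})_{\chi}$ and with what multiplicity. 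The delicate point is that these invariants must be insensitive to passing to a direct summand: I would argue that a single non-projective indecomposable constituent already ``sees'' all of $I_p$ (because the part of the lattice genuinely depending on $I$ is indecomposable, coming from a single ideal of the form $(N_{I_p}, \dots)$ in the relevant local factor) and all of $D_p$ (because the cyclic action of $D_p/I_p$ is encoded in a single indecomposable block). Concretely, I expect to show that $({}_p \cL_{I, \varphi})_{\chi}$ is, up to a free summand, itself indecomposable on the surviving $\chi$-component, so that ``common summand'' collapses to ``isomorphic up to free summands,'' which is $(i)$; then the already-established $(iii) \Rightarrow (i)$ plus an injectivity statement (distinct $(I_p, D_p)$ give non-projectively-equivalent lattices, proved by comparing the aforementioned invariants) closes the cycle. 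Verifying that indecomposability claim — that the relevant lattice has no nontrivial non-projective decomposition — is where the real work lies, and I would handle it by the explicit ideal-theoretic presentation over the discrete valuation ring $\cO_{\chi}$ combined with a Heller-operator / syzygy computation analogous to the one in the proof of Theorem \ref{first_main}.
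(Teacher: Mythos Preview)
Your overall strategy is close to the paper's, but there is a genuine gap in your plan for $(ii)\Rightarrow(i)$. You expect to show that $({}_p\cL_{I,\varphi})_\chi$ is indecomposable up to a free summand, so that ``common summand'' collapses to ``projectively equivalent.'' This indecomposability claim is \emph{false} in general. Under the hypothesis $({}_pA_{I,\varphi})_\chi\not\sim 0$ one has $I_p\neq 0$ and $\chi$ trivial on $D$ (Proposition~\ref{prop:free}), and the paper shows (Propositions~\ref{prop:ext_L} and~\ref{prop:indec}) that $({}_p\cL_{I,\varphi})_\chi$ sits in an extension
\[
0\to \cO_\chi[\Gamma_p/I_p]\to ({}_p\cL_{I,\varphi})_\chi\to \cO_\chi[\Gamma_p]/(\nu_{I_p})\to 0,
\]
and that either this lattice is indecomposable \emph{or} the sequence splits. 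The split case actually occurs --- precisely when $D_p=I_p$, i.e.\ when the image of $\varphi$ in $\Gamma_p/I_p$ is trivial (the extension class is the class of $\varphi^{-1}-1$). In that case $({}_p\cL_{I,\varphi})_\chi$ is the direct sum of the two cyclic modules $\cO_\chi[\Gamma_p/I_p]$ and $\cO_\chi[\Gamma_p]/(\nu_{I_p})$, both of which are non-free (since $I_p\neq 0$) and indecomposable. So your ``indecomposable up to free'' claim fails, and you cannot directly collapse $(ii)$ to $(i)$. The paper's fix is to observe that in the split case the two summands are determined by $I_p$ alone, and that neither can be isomorphic to a summand coming from a different $I'_p$ (one contains the trivial-character component, the other does not), so a shared summand still forces $I_p=I'_p$ and hence isomorphism.

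Two smaller remarks. First, for $(iii)\Rightarrow(i)$ you assert that the explicit lattice ``depends only on $(I_p,D_p)$''; this is correct but needs an argument, since a priori it depends on the \emph{generator} $\varphi$ of $D/I$, not just on $D$. The paper handles this by exhibiting an explicit unit $eu+(1-e)$ (with $e=\nu_{I_p}/\#I_p$) that carries one lattice to the other when $\overline\varphi$ and $\overline{\varphi'}$ generate the same subgroup of $\Gamma_p/I_p$. Second, for $(i)\Rightarrow(iii)$ the paper does not extract $I_p$ and $D_p$ from rank or character data of the lattice as you propose, but rather computes the Tate cohomology $\hat H^0(H,({}_pA_{I,\varphi})_\chi)\simeq \cO_\chi[\Gamma_p/(D_p+H)]/(\#(I_p\cap H))$ for varying $p$-subgroups $H\subset\Gamma_p$; since Tate cohomology is an invariant of the equivalence class, this immediately reads off $I_p$ (from the annihilators as $H$ varies) and then $D_p$ (taking $H$ to be the $p$-Sylow of $I$). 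Your rank/character approach could likely be made to work too, but the cohomological route is shorter and avoids any case analysis.
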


Let us prove Theorem \ref{thm:beta}, assuming these propositions.

Recall that $\cZ^{\adm}$ is defined as the image of the homomorphism
\[
\bN^{\wtil{\cS}} \to \Cmodsim
\]
that sends $(I, \varphi)$ to $[\omega^1(A_{I, \varphi})]$.
As noted in Remark \ref{rem:shift}, we may consider $\omega^{-1}(A_{I, \varphi})$ instead.

First, for each $(p, \chi)$, let us consider the image 
of $\bN^{\wtil{\cS}} \to \Lat_{\cO_{\chi}[\Gamma_p]}^{\pe}$ 
given by $(I, \varphi) \mapsto ({}_p \cL_{I, \varphi})_{\chi}$.
Thanks to Proposition \ref{prop:free} and Proposition 
\ref{prop:equiv_cri} (i) $\Leftrightarrow$ (ii), the image 
is a free monoid and its basis is the set
\[
\{({}_p \cL_{I, \varphi})_{\chi} \in \Lat_{\cO_{\chi}[\Gamma_p]}^{\pe} \mid (I, \varphi) \in \wtil{\cS}, 
\text{$I_p$ is non-trivial and $\chi$ is trivial on $D$} \}.
\]
Here, projectively equivalent lattices are counted as the same.
Moreover, by Proposition \ref{prop:equiv_cri} (ii) $\Leftrightarrow$ (iii), 
this set is in one-to-one correspondence with the set $\cS_p$ 
by $({}_p \cL_{I, \varphi})_{\chi} \leftrightarrow (I_p, D_p)$.
Consequently, we have a commutative diagram
\[
\xymatrix{
	\N^{\wtil{\cS}} \ar[r] \ar[rd]_{\beta_{p, \chi}}
	& \Lat_{\cO_{\chi}[\Gamma_p]}^{\pe}\\
	& \bN^{\cS_p}, \ar@{^(->}[u]
}
\]
where the map $\beta_{p, \chi}$ sends $(I, \varphi) \in \wtil{\cS}$ to 
\[
\begin{cases}
	(I_p, D_p) \in \cS_p & (\text{if $I_p$ is non-trivial and 
	$\chi$ is trivial on $D$})\\
	0 & (\text{otherwise}).
\end{cases}
\]

Now we vary $p, \chi$.
By the description of $\beta_{p, \chi}$, we obtain the following 
commutative diagram
\[
\xymatrix{
	\N^{\wtil{\cS}} \ar[r]^-{(\beta_{p, \chi})} \ar@{->>}[d]
	& \bigoplus_{(p, \chi)} \bN^{\cS_p} \ar@{^(->}[r]
	& \bigoplus_{(p, \chi)} \Lat_{\cO_{\chi}[\Gamma_p]}^{\pe}\\
	\N^{\cS} \ar[r]_-{(\beta_{p, H})}
	& \bigoplus_{(p, H)} \bN^{\cS_p} \ar@{^(->}[u] &
}
\]
The surjective homomorphism $\bN^{\wtil{\cS}} \to \bN^{\cS}$ is 
induced by the surjective map $\wtil{\cS} \to \cS$ that sends 
$(I, \varphi)$ to $(I, D)$ as before.
The injective homomorphism $\bigoplus_{(p, H)} \bN^{\cS_p} 
\to \bigoplus_{(p, \chi)} \bN^{\cS_p}$ is the diagonal one 
that sends $H$-component to $\chi$-components with $\Ker(\chi) = H$.
Finally, the map $\beta_{p, H}: \bN^{\cS} \to \bN^{\cS_p}$ 
sends $(I, D)$ to
\[
\begin{cases}
	(I_p, D_p) \in \cS_p & (\text{if $I_p$ is non-trivial and $D \subset H$})\\
	0 & (\text{otherwise}).
\end{cases}
\]
Then, identifying $\bigoplus_{(p, H)} \bN^{\cS_p}$ with $\bN^{\cT}$, we may identify the map $(\beta_{p, H})$ as $\beta$.
Thus, we obtain Theorem \ref{thm:beta}, assuming Propositions \ref{prop:free} and \ref{prop:equiv_cri}.

\subsection{Tate cohomology groups}

In this subsection, we deduce Proposition \ref{prop:free} and a part of Proposition \ref{prop:equiv_cri}.
As observed in \cite[Lemma 6.1]{GK}, the definition of $\sim$ implies that equivalent modules in $\cC$ have isomorphic Tate cohomology groups.
So our idea is to compute Tate cohomology groups for various subgroups $H$ of $\Gamma$ (now we consider an arbitrary subgroup $H$ in contrast
with Definition \ref{defn:sets}).

\begin{lem}
For any subgroup $H \subset \Gamma$, 
both $\hat{H}^0(H, A_{I, \varphi})$ and $\hat{H}^{-1}(H, A_{I, \varphi})$ are isomorphic to $\Z[\Gamma/(D + H)]/(\#(I \cap H))$
as $\Z[\Gamma/H]$-modules.
\end{lem}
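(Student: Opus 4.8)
The approach will be to read off both Tate cohomology groups from a short free resolution of $A_{I,\varphi}$, restricted to the subgroup $H$. Write $R = \Z[\Gamma/I]$, $g = 1 - \varphi^{-1} + \# I \in R$, and $n := \#(I\cap H)$. First I would check that $g$ is a non-zero-divisor in $R$: for each character $\psi$ of $\Gamma/I$ one has $\psi(g) = 1 - \psi(\varphi)^{-1} + \# I \neq 0$, since $|\psi(\varphi)^{-1}| = 1 < 1 + \# I$, so $g$ is a unit in $\Q[\Gamma/I]\supset R$. Hence
\[
0 \to R \xrightarrow{\ \cdot g\ } R \to A_{I,\varphi} \to 0
\]
is an exact sequence of $\Z[\Gamma]$-modules, and a fortiori of $\Z[H]$-modules.

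Next I would compute $\hat{H}^q(H, R)$. As a $\Z[H]$-module, $R = \Z[\Gamma/I]$ is a permutation module: the $H$-orbits on $\Gamma/I$ are indexed by $\Gamma/(I+H)$, and each has stabilizer $I\cap H$ (this uses that $\Gamma$ is abelian), so $R$ is a direct sum of $\#(\Gamma/(I+H))$ copies of $\Z[H/(I\cap H)]$. By Shapiro's lemma, $\hat{H}^q(H, R) \cong \hat{H}^q(I\cap H, \Z)^{\oplus \#(\Gamma/(I+H))}$. In particular $\hat{H}^{-1}(H, R) = 0$ and $\hat{H}^1(H, R) = 0$, while
\[
\hat{H}^0(H, R) = R^H / N_H R \cong (\Z/n)[\Gamma/(I+H)]
\]
as $\Z[\Gamma/H]$-modules, because $R^H$ is free on the $H$-orbit sums and $N_H := \sum_{h\in H} h$ scales each orbit sum by $n$.

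Feeding this into the long exact Tate cohomology sequence of the displayed short exact sequence, the two vanishings collapse it to
\[
0 \to \hat{H}^{-1}(H, A_{I,\varphi}) \to \hat{H}^0(H, R) \xrightarrow{\ \cdot g\ } \hat{H}^0(H, R) \to \hat{H}^0(H, A_{I,\varphi}) \to 0 .
\]
On $\hat{H}^0(H, R) \cong (\Z/n)[\Gamma/(I+H)]$, multiplication by $g$ becomes multiplication by $1 - \overline{\varphi}^{-1}$, since $n = \#(I\cap H)$ divides $\#I$; here $\overline{\varphi}$ denotes the image of $\varphi$ in $\Gamma/(I+H)$. The subgroup $\langle \overline{\varphi}\rangle \subset \Gamma/(I+H)$ equals $(D+H)/(I+H)$ and acts on $\Gamma/(I+H)$ by translation, hence freely. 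Therefore both $\Cok(1-\overline{\varphi}^{-1})$ and $\Ker(1-\overline{\varphi}^{-1})$ on $(\Z/n)[\Gamma/(I+H)]$ are, respectively, the coinvariants and the invariants of a free group action on a permutation module, and each is canonically identified with $(\Z/n)[\Gamma/(D+H)] = \Z[\Gamma/(D+H)]/(\#(I\cap H))$. Since every map in sight is induced by quotient maps of $\Gamma$, all these identifications are $\Z[\Gamma/H]$-equivariant, and the lemma follows.

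The step needing the most care is the combinatorial bookkeeping: verifying that $H$-stabilizers on $\Gamma/I$ are $I\cap H$, that $N_H$ scales orbit sums by $\#(I\cap H)$, that $\langle\overline{\varphi}\rangle = (D+H)/(I+H)$ with quotient $\Gamma/(D+H)$, and that the free action forces invariants and coinvariants to agree — all while tracking the $\Z[\Gamma/H]$-module structure. The only genuinely non-formal input is the non-zero-divisor check for $g$.
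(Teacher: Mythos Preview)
Your proof is correct and follows essentially the same route as the paper: both use the free resolution $0 \to \Z[\Gamma/I] \xrightarrow{g} \Z[\Gamma/I] \to A_{I,\varphi} \to 0$, compute $\hat{H}^i(H,\Z[\Gamma/I])$ via Shapiro's lemma (the paper phrases this as $\Z[\Gamma]\otimes_{\Z[I+H]}\hat{H}^i(I\cap H,\Z)$, you as a permutation-module decomposition), and then read off kernel and cokernel of $g$ on $\hat{H}^0$ using that $\#I$ annihilates it. Your write-up is somewhat more explicit than the paper's at the final step (identifying $\langle\overline{\varphi}\rangle$ with $(D+H)/(I+H)$ and computing invariants and coinvariants of its free action), and you add the non-zero-divisor check for $g$, which the paper leaves implicit.
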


\begin{proof}
First let us show that
\[
\hat{H}^i(H, \Z[\Gamma/I]) 
\simeq
\begin{cases}
	\Z[\Gamma/(I + H)]/(\#(I \cap H)) & (i = 0)\\
	0 & (i = -1, 1).
\end{cases}
\]
For this, we observe that, for any $i \in \Z$,
\begin{align}
\hat{H}^i(H, \Z[\Gamma/I]) 
& \simeq \Z[\Gamma] \otimes_{\Z[I + H]} \hat{H}^i(H, \Z[(I + H)/I])\\
& \simeq \Z[\Gamma] \otimes_{\Z[I + H]} \hat{H}^i(H, \Z[H/(I \cap H)])\\
& \simeq \Z[\Gamma] \otimes_{\Z[I + H]} \hat{H}^i(I \cap H, \Z)
\end{align}
by using Shapiro's lemma.
When $i = 1$, the claim follows from 
\[
H^1(I \cap H, \Z) = \Hom(I \cap H, \Z) = 0.
\]
To show the claim for $i = -1, 0$, we only have to observe 
that $H_0(I \cap H, \Z) = \Z$, $H^0(I \cap H, \Z) = \Z$, 
and the multiplication by $N_{I \cap H}$ coincides with 
the multiplication by $\# (I \cap H)$ on $\Z$.

By the definition of $A_{i, \varphi}$, we have an exact sequence 
\[
0 \to \Z[\Gamma/I] \overset{1 - \varphi^{-1} + \# I}{\to} 
  \Z[\Gamma/I] \to A_{I, \varphi} \to 0.
\]
Then the lemma follows from the resulting long exact sequence.
Here, we need to use that $\hat{H}^i(H, \Z[\Gamma/I]) $ is 
annihilated by $\# I$.
\end{proof}

\begin{proof}[Proof of Proposition \ref{prop:free}]
Suppose (ii) is false, i.e., $I_p$ is non-trivial and $\chi$ 
is trivial on $D$. Then
\[
\hat{H}^0(I, ({}_p A_{I, \varphi})_{\chi}) 
\simeq \Z_p[\Gamma/D]_{\chi}/(\# I_p)
\]
is nonzero, so (i) is false.

Now suppose (ii) is true.
If $I_p$ is trivial, then $\Z_p[\Gamma/I]$ is $\Gamma$-c.t., 
so ${}_pA_{I, \varphi}$ is also $\Gamma$-c.t.
If $I_p$ is non-trivial and $\chi$ is non-trivial on $D$, 
then $({}_pA_{I, \varphi})_{\chi} = 0$. Therefore, (i) is true.
\end{proof}

\begin{proof}[Proof of Proposition \ref{prop:equiv_cri} 
(i) $\Rightarrow$ (ii) and (i) $\Rightarrow$ (iii)]
(i) $\Rightarrow$ (ii) is clear.
To show (i) $\Rightarrow$ (iii), it is enough to show that 
the module structure of 
\[
\hat{H}^0(H, ({}_p A_{I, \varphi})_{\chi}) 
\simeq \Z_p[\Gamma/(D + H)]_{\chi}/(\# (I \cap H))
\]
allows to recover the groups $I_p$ and $D_p$.
Here, $I_p$ is non-trivial and $\chi$ is trivial on $D$.

For each $p$-subgroup $H$ of $\Gamma$, the order $\# (I \cap H)$ 
is determined by the minimum positive integer that annihilates 
$\hat{H}^0(H, ({}_p A_{I, \varphi})_{\chi})$.
By varying $H$, we thus determine the subgroup $I_p$ of $\Gamma_p$.

Then, by taking the $p$-Sylow subgroup of $I$ as $H$, we know 
the module $\Z_p[\Gamma/D]_{\chi}/(\# I_p)$.
Since $I_p$ is non-trivial, this determines $D_p$.
This is what we wanted.
\end{proof}

We will prove (ii) $\Rightarrow$ (i) and (iii) $\Rightarrow$ (i) 
in the subsequent subsections.

\subsection{The lattice associated to $\omega^{-1}(A_{I, \varphi})$}
  \label{ss:lat_shift_clgp}

To do this, we obtain a concrete description of $\cL_{I, \varphi}$, which was defined as the lattice associated to $\omega^{-1}(A_{I, \varphi})$.
It is a key idea here that $\omega^{-1}(A_{I, \varphi})$ is much easier than $\omega^1(A_{I, \varphi})$, which we described in \S \ref{ss:lat_clgp} only when the group is cyclic.
We write $\nu_I = \sum_{\sigma \in I} \sigma \in \Z[I]$ for the norm element.

\begin{prop}\label{prop:L_explicit}
We have
\[
\cL_{I, \varphi} \sim_{\pe} \Big( \nu_I, 1 - \frac{\nu_I}{\# I} \varphi^{-1} \Big).
\]
\end{prop}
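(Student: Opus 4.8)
The plan is to compute $\omega^{-1}(A_{I,\varphi})$ directly, taking advantage of the fact that computing $\omega^{-1}$ amounts to \emph{embedding} $A_{I,\varphi}$ into a module in $\cP$ rather than exhibiting it as a quotient of one, and that here a convenient such module is immediately at hand. Fix a lift $\wtil{\varphi}\in\Gamma$ of $\varphi$ and set $\wtil{g}:=1-\wtil{\varphi}^{-1}+\#I\in R$, where $R:=\Z[\Gamma]$; thus $A_{I,\varphi}=\Z[\Gamma/I]/(\wtil{g})$, with $\wtil{g}$ acting through its image in $\Z[\Gamma/I]$. The first step is to record two properties of $\wtil{g}$, both immediate because $\chi(\wtil{g})=(1+\#I)-\chi(\wtil{\varphi}^{-1})$ is nonzero for every character $\chi$ of $\Gamma$ (the root of unity $\chi(\wtil{\varphi}^{-1})$ having absolute value $1<1+\#I$): (a) $\wtil{g}$ is a non-zero-divisor in $R$, so $R/\wtil{g}R$ lies in $\cP$ by way of the resolution $0\to R\xrightarrow{\wtil{g}}R\to R/\wtil{g}R\to0$; (b) writing $e_I:=\nu_I/\#I$ and decomposing $\Q[\Gamma]=e_I\Q[\Gamma]\times(1-e_I)\Q[\Gamma]$, the image of $\wtil{g}$ in the second factor $(1-e_I)\Q[\Gamma]=\prod_{\chi|_I\ne1}\Q(\chi)$ is a unit.

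Next I would use the standard identification $\Z[\Gamma/I]\cong\nu_I R$ given by $\bar{r}\mapsto\nu_I r$, whose kernel is $\Ann_R(\nu_I)$, the kernel of the natural surjection $R\twoheadrightarrow\Z[\Gamma/I]$; under it, $A_{I,\varphi}\cong\nu_I R/\wtil{g}\,\nu_I R$. The inclusion $\nu_I R\hookrightarrow R$ then induces an $R$-homomorphism $A_{I,\varphi}\to R/\wtil{g}R$, and the crux of the matter is that this map is \emph{injective}, equivalently $\nu_I R\cap\wtil{g}R=\wtil{g}\,\nu_I R$. To prove it, let $y\in R$ with $\wtil{g}y\in\nu_I R$. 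Since $\nu_I R=R\cap e_I\Q[\Gamma]$ is precisely the set of $I$-invariant elements of $R$, this forces $(1-e_I)\wtil{g}\cdot y=0$ in $(1-e_I)\Q[\Gamma]$; as $(1-e_I)\wtil{g}$ is a unit there by (b), we get $(1-e_I)y=0$, i.e.\ $y\in R\cap e_I\Q[\Gamma]=\nu_I R$, and hence $\wtil{g}y\in\wtil{g}\,\nu_I R$. Pinning down the right target $R/\wtil{g}R$ and checking injectivity by passing to the idempotent decomposition is the step I expect to be the main obstacle.

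Granting this, the rest is formal. We obtain a short exact sequence $0\to A_{I,\varphi}\to R/\wtil{g}R\to R/(\nu_I,\wtil{g})\to0$ in $\cC$ whose middle term lies in $\cP$; by the definition of the shift operator this gives $\omega^1\bigl(R/(\nu_I,\wtil{g})\bigr)\sim A_{I,\varphi}$, i.e.\ $\omega^{-1}(A_{I,\varphi})\sim R/(\nu_I,\wtil{g})$, and applying $\Phi$ to the presentation $R\twoheadrightarrow R/(\nu_I,\wtil{g})$ yields $\cL_{I,\varphi}\sim_{\pe}(\nu_I,\wtil{g})$, the ideal of $R$ generated by $\nu_I$ and $\wtil{g}$. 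It remains to pass from $\wtil{g}$ to $h:=1-\tfrac{\nu_I}{\#I}\varphi^{-1}=1-e_I\varphi^{-1}$. The element $\epsilon:=e_I+(1-e_I)\wtil{g}$ is a unit of $\Q[\Gamma]$ by (b), and using $e_I\nu_I=\nu_I$, $(1-e_I)\nu_I=0$ and $e_I\wtil{g}=e_I(1-\varphi^{-1})+\nu_I$ one checks at once that $\epsilon\nu_I=\nu_I$ and $\epsilon h=\wtil{g}-\nu_I$. Hence multiplication by the unit $\epsilon$ carries the fractional ideal $(\nu_I,h)$ isomorphically onto $(\nu_I,\wtil{g}-\nu_I)=(\nu_I,\wtil{g})$, so $(\nu_I,h)\cong(\nu_I,\wtil{g})$ as $R$-modules, whence $\cL_{I,\varphi}\sim_{\pe}(\nu_I,h)$, as desired. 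Note that the argument nowhere uses that $\Gamma$ is cyclic, which is exactly the advantage of working with $\omega^{-1}$ rather than $\omega^1$.
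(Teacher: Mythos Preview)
Your proof is correct and follows essentially the same route as the paper: both obtain the exact sequence $0\to A_{I,\varphi}\to R/(\wtil g)\to R/(\nu_I,\wtil g)\to 0$ (the paper phrases the injectivity step as an application of the snake lemma, but it rests on the same fact that $(1-e_I)\wtil g$ is a unit), deduce $\cL_{I,\varphi}\sim_{\pe}(\nu_I,\wtil g)$, and then pass to $(\nu_I,1-e_I\varphi^{-1})$ by multiplying by a unit of $\Q[\Gamma]$. The only cosmetic difference is that the paper uses the composite unit $h\wtil g^{-1}$ (with $h=1-e_I\varphi^{-1}+\nu_I$) whereas you use the single unit $\epsilon=e_I+(1-e_I)\wtil g$.
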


\begin{proof}
We use the computation in \cite[\S 4A]{AK}, which was used to 
determine $\Fitt^{[-1]}(A_{I, \varphi})$.
Let us mention here that the idea here comes from the fact 
that $\Fitt^{[-1]}(A_{I, \varphi})$ is easier than $\Fitt^{[1]}(A_{I, \varphi})$, which 
corresponds to $\Cl_L^{T, -, \vee}$ versus $\Cl_L^{T, -}$.

We have an exact sequence
\[
0 \to \Z[\Gamma/I] \overset{\nu_I}{\to} \Z[\Gamma] \to \Z[\Gamma]/(\nu_I) \to 0.
\]
Let $\wtil{\varphi} \in D$ be a lift of $\varphi \in D/I$.
Put $\wtil{g} = 1 - \wtil{\varphi}^{-1} + \# I \in \Z[\Gamma]$, which 
is of course a lift of $g = 1 - \varphi^{-1} + \# I$.
By the snake lemma, we obtain an exact sequence
\[
0 \to A_{I, \varphi} \to \Z[\Gamma]/(\wtil{g}) \to \Z[\Gamma]/(\wtil{g}, \nu_I) \to 0.
\]
This implies
\[
\omega^{-1}(A_{I, \varphi}) \sim \Z[\Gamma]/(\wtil{g}, \nu_I).
\]
Therefore, by the construction of $\Phi$, we see that $\Phi(\omega^{-1}(A_{I, \varphi}))$ 
is the class of the lattice
\[
(\wtil{g}, \nu_I) \subset \Z[\Gamma].
\]
Let us modify this lattice by multiplying some non-zero-divisors of 
$\Q[\Gamma]$. First we have
\[
\wtil{g}^{-1} (\wtil{g}, \nu_I)
= (1, \nu_I g^{-1}).
\]
Put $h := 1 - \frac{\nu_I}{\# I} \varphi^{-1} + \nu_I$.
Then $\nu_I g = \nu_I h$, so 
\[
h (1, \nu_I g^{-1}) = (h, \nu_I) 
= \Big( \nu_I, 1 - \frac{\nu_I}{\# I} \varphi^{-1} \Big).
\]
This completes the proof.
\end{proof}

From now on, when we write $\cL_{I, \varphi}$, it always means the representative described in this proposition.
For each odd prime number $p$ and a character $\chi$ of $\Gamma$ 
of order prime to $p$, we have
\[
({}_p \cL_{I, \varphi})_{\chi}
= \Big( \nu_{I_p}, 1 - \frac{\nu_{I_p}}{\# I_p} \ol{\varphi}^{-1} \Big)
\]
as lattices of $\cO_{\chi}[G_p]$, where $\ol{\varphi} \in G_p$ 
denotes the image of $\varphi$.

\begin{proof}[Proof of Proposition \ref{prop:equiv_cri} (iii) $\Rightarrow$ (i)]
It is enough to show that (iii) implies that $({}_p \cL_{I, \varphi})_{\chi}$ 
and $({}_p \cL_{I', \varphi'})_{\chi}$ are isomorphic.
Since $\ol{\varphi}$ and $\ol{\varphi'}$ generate the same subgroup 
of $G_p/I_p$, the elements $(1 - \ol{\varphi}^{-1})$ 
and $(1 - \ol{\varphi'}^{-1})$ generate the same ideal of $\Z_p[G_p/I_p]$.
It follows that there is a unit $u \in \Z_p[G_p/I_p]^{\times}$ such that
\[
u (1 - \ol{\varphi}^{-1}) = (1 - \ol{\varphi'}^{-1}).
\]
To ease the notation, let us put $e = \frac{\nu_{I_p}}{\# I_p}$.
Then we have $(eu + (1 - e)) ({}_p \cL_{I, \varphi})_{\chi} 
 = ({}_p \cL_{I', \varphi'})_{\chi}$.
Indeed,
\begin{align}
(eu + (1 - e)) ({}_p \cL_{I, \varphi})_{\chi}
& = \Big( (eu + (1 - e)) \nu_{I_p}, (eu + (1 - e)) (1 - e \ol{\varphi}^{-1}) \Big)\\
& = \Big( eu \nu_{I_p}, eu (1 - \ol{\varphi}^{-1}) + (1 - e) \Big)\\
& = \Big( \nu_{I_p}, 1 - e \ol{\varphi'}^{-1} \Big)\\
& = ({}_p \cL_{I', \varphi'})_{\chi}.
\end{align}
Thus we have proved Proposition \ref{prop:equiv_cri} (iii) $\Rightarrow$ (i).
\end{proof}

\begin{rem}
In fact, we have a more natural proof of Proposition \ref{prop:equiv_cri} (i) $\Leftrightarrow$ (iii).
Let us sketch it.
By Proposition \ref{prop:ext_L} below, the lattice ${}_p \cL_{I, \varphi}$ is an extension of $\Z_p[\Gamma]/(\nu_I)$ by $\Z_p[\Gamma/I]$.
It is possible to directly compute its extension class; we have an isomorphism
\[
\Ext^1_{\Z_p[\Gamma]}(\Z_p[\Gamma]/(\nu_I), \Z_p[\Gamma/I]) \simeq \Z_p[\Gamma/I]/(\# I)
\]
and the extension class corresponds to the class of $\varphi^{-1} - 1$.
Therefore, condition (iii) in Proposition \ref{prop:equiv_cri} claims that the extension classes are the same up to a unit, which indicates that the lattices are isomorphic.
\end{rem}

\subsection{Direct summands of $({}_p \cL_{I, \varphi})_{\chi}$}\label{ss:summand}

Let us study the lattice $\cL_{I, \varphi}$ described in Proposition \ref{prop:L_explicit}, as a preparation
for the missing equivalence of the proof of Proposition \ref{prop:equiv_cri}.

\begin{prop}\label{prop:ext_L}
We have an exact sequence
\begin{equation}\label{eq:EXT}
0 \to \Z[\Gamma/I] \overset{\nu_I}{\to} \cL_{I, \varphi} 
  \to \Z[\Gamma]/(\nu_I) \to 0.
\end{equation}
\end{prop}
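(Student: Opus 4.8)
\textbf{Proof proposal for Proposition \ref{prop:ext_L}.}

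The plan is to read off the short exact sequence directly from the explicit description of $\cL_{I, \varphi}$ in Proposition \ref{prop:L_explicit}. Recall that $\cL_{I, \varphi} \sim_{\pe} \bigl( \nu_I, 1 - \tfrac{\nu_I}{\# I} \varphi^{-1} \bigr)$ as an ideal of $\Z[\Gamma]$, where I should fix a lift $\wtil{\varphi} \in D$ of $\varphi$ and interpret $\tfrac{\nu_I}{\# I}\varphi^{-1}$ as the idempotent $e = \tfrac{\nu_I}{\# I}$ times $\wtil{\varphi}^{-1}$ (note $e \wtil{\varphi}^{-1}$ is independent of the chosen lift since $e \sigma = e$ for $\sigma \in I$). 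First I would define the map $\Z[\Gamma/I] \to \cL_{I, \varphi}$ as multiplication by $\nu_I$, viewing $\Z[\Gamma/I] = \Z[\Gamma]/(\sigma - 1 : \sigma \in I)$; this is well-defined since $\nu_I(\sigma - 1) = 0$, and its image is the ideal $(\nu_I) \subset \cL_{I, \varphi}$. Injectivity holds because $\nu_I$ is a non-zero-divisor on $\Z[\Gamma/I]$ (on the $\chi$-component for $\chi$ trivial on $I$ it acts as $\# I \neq 0$, and the other components vanish). Then I would check that the quotient $\cL_{I, \varphi}/(\nu_I)$ is isomorphic to $\Z[\Gamma]/(\nu_I)$ via the map induced by sending the generator $1 - e\wtil{\varphi}^{-1}$ of $\cL_{I, \varphi}$ to $1 \bmod (\nu_I)$.

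The key computation is to verify that this last map is well-defined and bijective. For surjectivity, note that modulo $\nu_I$ we have $1 - e\wtil\varphi^{-1} \equiv 1$, so the image of $1 - e\wtil\varphi^{-1}$ generates $\Z[\Gamma]/(\nu_I)$. For well-definedness and injectivity, I need to identify the kernel of the composite $\cL_{I, \varphi} \twoheadrightarrow \Z[\Gamma]/(\nu_I)$ with exactly $(\nu_I)$. The cleanest way: write $\cL_{I,\varphi} = \Z[\Gamma]\cdot\nu_I + \Z[\Gamma]\cdot(1 - e\wtil\varphi^{-1})$, and observe that an element $a\nu_I + b(1-e\wtil\varphi^{-1})$ lies in $(\nu_I)$ iff $b(1-e\wtil\varphi^{-1}) \in (\nu_I)$. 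Multiplying by the idempotent $e$, $b(1-e\wtil\varphi^{-1})$ lands in $(\nu_I) = (e)\Z[\Gamma] \cdot (\#I)$... here I should be slightly careful: $(\nu_I) = \#I \cdot e\Z[\Gamma]$ is not quite $e\Z[\Gamma]$, so I'd instead argue via the $\chi$-components. Split $\Q[\Gamma] = \Q[\Gamma]e \times \Q[\Gamma](1-e)$; on the $(1-e)$-part $\cL_{I,\varphi}$ is all of $(1-e)\Z[\Gamma]$ (generated by $(1-e)(1 - e\wtil\varphi^{-1}) = 1-e$) while $(\nu_I)$ is zero there, and on the $e$-part $\cL_{I,\varphi} = e\Z[\Gamma]$ contains $(\nu_I) = \#I\cdot e\Z[\Gamma]$ with quotient $e\Z[\Gamma]/\#I \cdot e\Z[\Gamma] \simeq \Z[\Gamma/I]/(\#I)$. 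Matching this against $\Z[\Gamma]/(\nu_I)$, whose $e$-part is $\Z[\Gamma/I]/(\#I)$ and whose $(1-e)$-part is $(1-e)\Z[\Gamma]$, shows the quotient map is an isomorphism componentwise.

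The main obstacle is bookkeeping around the non-principal ideal $(\nu_I)$ versus the idempotent $e = \nu_I/\#I$: $\cL_{I,\varphi}$ is a lattice inside $\Z[\Gamma]$ but $e \notin \Z[\Gamma]$, so I cannot literally decompose $\Z[\Gamma]$ using $e$ and must work with the rational decomposition of $\Q[\Gamma]$ and intersect back. An alternative, perhaps slicker, route is to revisit the proof of Proposition \ref{prop:L_explicit}: there $\cL_{I,\varphi}$ arose (before modification by units) as $(\wtil g, \nu_I) \subset \Z[\Gamma]$, which by the snake-lemma argument sits in $0 \to A_{I,\varphi} \to \Z[\Gamma]/(\wtil g) \to \Z[\Gamma]/(\wtil g, \nu_I) \to 0$; combining this with the defining sequence $0 \to \Z[\Gamma/I] \xrightarrow{\nu_I} \Z[\Gamma] \to \Z[\Gamma]/(\nu_I) \to 0$ and the surjection $\Z[\Gamma] \to \Z[\Gamma]/(\wtil g)$ via the snake lemma yields the desired sequence with the middle term $(\wtil g, \nu_I)$, which is $\cL_{I,\varphi}$ up to the unit multiplications that do not affect the isomorphism type of the extension. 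I would present whichever of these two is shorter, likely the second, citing the already-established computation in the proof of Proposition \ref{prop:L_explicit}.
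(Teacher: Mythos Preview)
Your first approach is in spirit what the paper does, but the componentwise computation contains an error. You assert that ``on the $e$-part $\cL_{I,\varphi} = e\Z[\Gamma]$'', yet the $e$-projection of $\cL_{I,\varphi}$ is generated over $\Z[\Gamma]$ by $e\nu_I = (\#I)e$ and $e(1-e\wtil\varphi^{-1}) = e(1-\wtil\varphi^{-1})$, which in $e\Z[\Gamma]\simeq \Z[\Gamma/I]$ corresponds to the \emph{proper} ideal $(\#I,\,1-\varphi^{-1})$. More fundamentally, because $e\notin\Z[\Gamma]$, the lattice $\cL_{I,\varphi}$ is not the direct sum of its $e$- and $(1-e)$-projections (for instance $1-e\notin \cL_{I,\varphi}$ whenever $\varphi$ is nontrivial), so you cannot compute $\cL_{I,\varphi}/(\nu_I)$ componentwise. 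The paper's argument avoids this by restricting the rational sequence $0 \to \Q[\Gamma/I] \xrightarrow{\nu_I} \Q[\Gamma] \xrightarrow{\pi} \Q[\Gamma]/\nu_I\Q[\Gamma] \to 0$ to $\cL_{I,\varphi}$ and computing (i) that $\pi(\cL_{I,\varphi})$ equals the natural copy of $\Z[\Gamma]/(\nu_I)$ (immediate since $\pi(1-e\wtil\varphi^{-1})=1$), and (ii) that $\cL_{I,\varphi}\cap \nu_I\Q[\Gamma/I] = \nu_I\Z[\Gamma/I]$. Step (ii) is the substantive one: writing $\nu_I a = \nu_I b + (1-e\wtil\varphi^{-1})c$ with $a\in\Q[\Gamma/I]$, $b\in\Z[\Gamma/I]$, $c\in\Z[\Gamma]$, one first observes $c\in \nu_I\Q[\Gamma]\cap\Z[\Gamma]=(\nu_I)$ and then deduces $\nu_I a \in (\nu_I)$, hence $a\in\Z[\Gamma/I]$.

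Your second route, via $(\wtil g, \nu_I)$, does work, but the snake lemma is not really what drives it. What is needed is simply that $(\wtil g,\nu_I)/(\nu_I)$ is the principal ideal generated by the image of $\wtil g$ in $\Z[\Gamma]/(\nu_I)$, and that this image is a non-zero-divisor there (for every character $\chi$ nontrivial on $I$ one has $|\chi(\wtil g)| = |1-\chi(\wtil\varphi)^{-1}+\#I| \geq \#I > 0$). This gives $(\wtil g,\nu_I)/(\nu_I) \simeq \Z[\Gamma]/(\nu_I)$; together with $(\nu_I)\simeq\Z[\Gamma/I]$ you obtain the desired sequence for $(\wtil g,\nu_I)$. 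Transporting by the unit $h\wtil g^{-1}$ preserves the map labeled $\nu_I$ because $\nu_I\wtil g = \nu_I h$ (as recorded in the proof of Proposition~\ref{prop:L_explicit}). This alternative is a bit shorter than the paper's direct computation on the representative $(\nu_I, 1 - e\wtil\varphi^{-1})$.
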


\begin{proof}
Consider the natural exact sequence
\[
0 \to \Q[\Gamma/I] \overset{\nu_I}{\to} \Q[\Gamma] 
\overset{\pi}{\to} \Q[\Gamma]/\nu_I \Q[\Gamma] \to 0.
\]
Let us show that this induces the claimed exact sequence, by observing the image and the preimage of $\cL_{I, \varphi}$.

Since $\pi(\nu_I) = 0$ and $\pi \big(1 - \frac{\nu_I}{\# I} 
\varphi^{-1} \big) = 1$, we see that $\pi(\cL_{I, \varphi})$ is generated by $1$ over $\Z[\Gamma]$.
The natural homomorphism $\Z[\Gamma]/(\nu_I) \to 
\Q[\Gamma]/\nu_I \Q[\Gamma]$ is injective 
and its image is generated by $1$ over $\Z[\Gamma]$.
Therefore, the image of $\cL_{I, \varphi}$ is $\Z[\Gamma]/(\nu_I)$, as claimed.

To determine the preimage,
let $a \in \Q[\Gamma/I]$ be any element such that $\nu_I a \in \cL_{I, \varphi}$.
We want to show $a \in \Z[\Gamma/I]$.
Let us take elements $b \in \Z[\Gamma/I]$ and $c \in \Z[\Gamma]$ 
such that $\nu_I a = \nu_I b + \big(1 - \frac{\nu_I}{\# I} \varphi^{-1} \big) c$.
Then $\nu_I(a - b) = \big(1 - \frac{\nu_I}{\# I} \varphi^{-1} \big) c$.
In particular, this equation implies $c \in \nu_I \Q[\Gamma]$, so
\[
c \in \Z[\Gamma] \cap \nu_I \Q[\Gamma] = \nu_I \Z[\Gamma] = (\nu_I).
\]
Then
\[
\nu_I (a - b) = \Big(1 - \frac{\nu_I}{\# I} \varphi^{-1} \Big) c
= (1 - \varphi^{-1} ) c
\in (\nu_I).
\]
This implies $\nu_I a \in (\nu_I)$, so $a \in \Z[\Gamma/I]$, as claimed.
This completes the proof.
\end{proof}

\begin{prop}\label{prop:indec}
Let $p$ be a prime number and $\chi$ a character of $\Gamma$ 
of order prime to $p$.
Then
either $({}_p \cL_{I, \varphi})_{\chi}$ is indecomposable 
over $\cO_{\chi}[G_p]$,  or the sequence
\[
0 \to \Z_p[G/I]_{\chi} \to ({}_p \cL_{I, \varphi})_{\chi} 
  \to (\Z_p[G]/(\nu_I))_{\chi} \to 0,
\]
which is obtained by Proposition \ref{prop:ext_L},
splits.
\end{prop}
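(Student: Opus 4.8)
The plan is to work over the local ring $\cO_{\chi}[G_p]$ and analyze the short exact sequence from Proposition \ref{prop:ext_L} after passing to the $(p,\chi)$-component:
\[
0 \to \Z_p[G/I]_{\chi} \to ({}_p \cL_{I, \varphi})_{\chi} \to (\Z_p[G]/(\nu_I))_{\chi} \to 0.
\]
First I would reduce to the case where $I_p$ is non-trivial and $\chi$ is trivial on $D$, since otherwise (by Proposition \ref{prop:free}) the middle term is projective, hence free over the local ring $\cO_{\chi}[G_p]$, so it is indecomposable or zero and there is nothing to prove (a rank-one free module is indecomposable). So assume we are in the ``interesting'' case. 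Then the outer two modules are both nonzero: $\Z_p[G/I]_{\chi} = \cO_{\chi}[G_p/I_p]$ (nonzero since $\chi$ is trivial on $I$) and $(\Z_p[G]/(\nu_I))_{\chi} = \cO_{\chi}[G_p]/(\nu_{I_p})$ (nonzero since $I_p$ is non-trivial).

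Second, the key observation is that both outer modules are \emph{indecomposable} over $\cO_{\chi}[G_p]$. For $\cO_{\chi}[G_p/I_p]$ this is because it is a quotient group ring of a local ring by an ideal, and in fact it is again local (it is $\cO_{\chi}[G_p/I_p]$, a group ring over a complete local ring with residue field of characteristic $p$ and group a $p$-group, hence local), so it has no non-trivial idempotents and is indecomposable. For $\cO_{\chi}[G_p]/(\nu_{I_p})$ the same reasoning applies: it is a quotient of a local ring, hence local, hence indecomposable as a module over itself and therefore over $\cO_{\chi}[G_p]$. Now I would invoke the following general principle about extensions of indecomposables over a (semiperfect, or in our case henselian local) ring: if $0 \to A \to E \to B \to 0$ is a non-split extension with $A$ and $B$ indecomposable and $\operatorname{Hom}(B,A)$, $\operatorname{End}(A)$, $\operatorname{End}(B)$ all local rings (which holds here since $A,B$ are indecomposable over a ring over which Krull--Schmidt holds), then $E$ is indecomposable. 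The argument is the standard one: any idempotent $e \in \operatorname{End}(E)$ either restricts to an idempotent of $\operatorname{End}(A)$ and descends to one of $\operatorname{End}(B)$; since these endomorphism rings are local their only idempotents are $0,1$; the case-analysis ($e|_A$ and $\bar e$ both $0$, or both $1$, versus the mixed cases) forces $e \in \{0,1\}$ once one rules out the mixed cases using non-splitness — a mixed case would produce a splitting of the sequence.

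Third, assembling: either the sequence splits, which is the second alternative in the statement, or it does not split, in which case the argument above shows $({}_p \cL_{I, \varphi})_{\chi}$ is indecomposable, which is the first alternative. I expect the main obstacle to be \emph{the case that the mixed idempotent configurations are incompatible with non-splitness}: one has to check carefully that an idempotent $e$ with $e|_A = \operatorname{id}_A$ and $\bar e = 0$ (or the reverse) yields a section or retraction splitting the sequence, using that $\operatorname{Hom}(B,A)$ is local (so its Jacobson radical argument gives that a lift differing by a radical element is still an isomorphism onto a complement). A cleaner packaging would be to note that $\operatorname{End}_{\cO_\chi[G_p]}(({}_p\cL_{I,\varphi})_\chi)$ sits in an exact sequence relating it to $\operatorname{End}(A)$, $\operatorname{End}(B)$, $\operatorname{Hom}(B,A)$ and $\operatorname{Ext}^1(B,A)$, and to show this endomorphism ring is local unless the $\operatorname{Ext}$-class vanishes; the latter is exactly the dichotomy asserted. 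The needed facts about the local structure of the group rings $\cO_\chi[G_p/I_p]$ and the quotient $\cO_\chi[G_p]/(\nu_{I_p})$ are elementary, and Krull--Schmidt over the henselian local ring $\cO_{\chi}[G_p]$ is available as recalled in \S\ref{ss:red_loc}.
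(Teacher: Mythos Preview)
Your general principle --- that a non-split extension of indecomposables with local endomorphism rings is indecomposable in a Krull--Schmidt category --- is false as stated. Over $R = k[t]/(t^4)$ with $A = B = k[t]/(t^2)$, the module $E = k[t]/(t^3) \oplus k$ admits a submodule generated by $(t,1)$ isomorphic to $A$ with quotient isomorphic to $A$, the extension is non-split (since $E \not\simeq A \oplus A$ by Krull--Schmidt), yet $E$ is visibly decomposable. The failure in your argument is the claim that any idempotent of $\End(E)$ ``restricts to an idempotent of $\End(A)$'': an arbitrary endomorphism of $E$ need not preserve $A$. That step requires $\Hom(A,B) = 0$, which you never verify. (Your parenthetical about ``$\Hom(B,A)$ being local'' is also garbled: $\Hom(B,A)$ is not a ring; what matters is that elements of $\Hom(B,A)$, viewed as endomorphisms of $E$ via $E \twoheadrightarrow B \to A \hookrightarrow E$, square to zero.)

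In the case at hand the missing hypothesis $\Hom(A,B) = 0$ does hold, because $A = \cO_{\chi}[G_p/I_p]$ and $B = \cO_{\chi}[G_p]/(\nu_{I_p})$ are lattices whose rationalizations have no common irreducible constituents; hence any $R$-map $A \to B$ vanishes generically and therefore vanishes. With this in place your idempotent analysis is salvageable. The paper's proof, by contrast, bypasses the endomorphism ring entirely: given a decomposition $M_1 \oplus M_2$, it uses cyclicity of $B$ and Nakayama to arrange $M_1 \twoheadrightarrow B$, then uses the same generic-disjointness fact (that $E$ is generically free of rank one, so $M_1, M_2$ share no rational constituents) to force $M_2 \to B$ to be zero, whence $M_2 \subset A$ and indecomposability of $A$ finishes. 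Both approaches hinge on the same rational-support observation; the paper uses it directly and concretely, while your route packages it as an abstract lemma that only becomes correct once that observation is made explicit.
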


\begin{proof}
Note that both $\Z_p[G/I]_{\chi}$ and $(\Z_p[G]/(\nu_I))_{\chi}$ 
are indecomposable unless zero, since they are cyclic modules 
over a local ring.

Suppose that there is a decomposition $({}_p \cL_{I, \varphi})_{\chi} 
= M_1 \oplus M_2$ with nonzero $M_1$ and $M_2$.
Since $(\Z_p[G]/(\nu_I))_{\chi}$ is a cyclic module, by Nakayama's lemma, 
we may assume that the map $M_1 \to (\Z_p[G]/(\nu_I))_{\chi}$ is surjective.

We claim that the map $M_2 \to (\Z_p[G]/(\nu_I))_{\chi}$ is zero.
For this, we may work after base-change from $\Z_p$ to $\Q_p$ 
so that everything is semi-simple.
Then $M_1$ and $M_2$ have no common irreducible components 
as $M_1 \oplus M_2$ is (generically) free of rank one.
Since there is a surjective map from $M_1$ to $(\Z_p[G]/(\nu_I))_{\chi}$, 
we see that $(\Z_p[G]/(\nu_I))_{\chi}$ and $M_2$ have 
no common irreducible components. This shows the claim.

Now by the displayed exact sequence, $\Z_p[G/I]_{\chi}$ is isomorphic 
to $\Ker(M_1 \to (\Z_p[G]/(\nu_I))_{\chi}) \oplus M_2$.
Therefore, $M_1 \to (\Z_p[G]/(\nu_I))_{\chi})$ is isomorphic, 
so the sequence splits.
\end{proof}

\begin{proof}[Proof of Proposition \ref{prop:equiv_cri} (ii) $\Rightarrow$ (i)]
Suppose that $({}_p \cL_{I, \varphi})_{\chi}$ and 
$({}_p \cL_{I', \varphi'})_{\chi}$ have a common direct summand.
We want to show that then these
two lattices are indeed isomorphic.
If one of them is indecomposable, then the claim is clear
(notice that the $\cO_{\chi}$-ranks of $({}_p \cL_{I, \varphi})_{\chi}$ and 
$({}_p \cL_{I', \varphi'})_{\chi}$ are the same).
Suppose that both are decomposable.
By Proposition \ref{prop:indec}, we have
\[
({}_p \cL_{I, \varphi})_{\chi} \simeq \Z_p[G/I]_{\chi} 
  \oplus (\Z_p[G]/(\nu_I))_{\chi} 
\]
and similarly for $({}_p \cL_{I', \varphi'})_{\chi}$.
The assumption implies that one of the following holds:
\[
\begin{cases}
\Z_p[G/I]_{\chi} \simeq \Z_p[G/I']_{\chi}\\
\Z_p[G/I]_{\chi} \simeq (\Z_p[G]/(\nu_{I'}))_{\chi} \\
(\Z_p[G]/(\nu_I))_{\chi} \simeq \Z_p[G/I']_{\chi}\\
(\Z_p[G]/(\nu_I))_{\chi} \simeq (\Z_p[G]/(\nu_{I'}))_{\chi} 
\end{cases}
\]
Neither the second nor the third isomorphism can hold, because
one side contains the trivial character component and the other does not.
Therefore, the first or the fourth occurs, which implies $I_p = I_p'$ and the desired isomorphism of lattices follows.
This completes the proof.
\end{proof}

\section{The structure of $\cZ^{\adm}$}\label{sec:7}

In this section, we prove Theorem \ref{thm:main} by using Theorem \ref{thm:beta}.
Let $\Gamma$ be a finite abelian group.
The case where $\Gamma$ is a $p$-group was done in Corollary \ref{cor:beta_p}.
The case where $\Gamma$ is cyclic will be done in \S \ref{ss:cyc}, and the other cases will be in \S \ref{ss:non-cyc}.

\subsection{Useful observations}

To study the image of $\beta$, the following is useful.

\begin{lem}\label{lem:I_dec}
Let $(I, D) \in \cS$ and we suppose $D$ is cyclic.
Then we have
\[
\beta((I, D)) = \sum_{p \mid \# I} \beta((I_{(p)}, D)),
\]
where $I_{(p)}$ denotes the $p$-Sylow subgroup of $I$.
Here we have $(I_{(p)}, D) \in \cS$ thanks to the assumption that $D$ is cyclic.
\end{lem}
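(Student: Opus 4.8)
The plan is to unwind the definition of $\beta$ on the left-hand side and regroup the terms according to which prime divides the order of the inertia group. Recall that
\[
\beta((I, D)) = \sum_{\substack{(p, H, I_p^*, D_p^*) \in \cT \\ D \subset H,\ I_p = I_p^*,\ D_p = D_p^*}} (p, H, I_p^*, D_p^*),
\]
where $I_p$ and $D_p$ denote the maximal $p$-quotients of $I$ and $D$. The key point is that in the tuple $(p, H, I_p^*, D_p^*) \in \cT$, the prime $p$ must divide $\# \Gamma$, but in fact a nonzero contribution forces $I_p^* = I_p$ to be non-trivial, hence $p \mid \# I$. So the sum over $\cT$ splits as a sum over the primes $p \mid \# I$, and for each such $p$ the inner condition is $D \subset H$ (with $\Gamma/H$ cyclic of order prime to $p$) together with $I_p = I_p^*$ and $D_p = D_p^*$.

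First I would observe that for a fixed prime $p \mid \# I$, the $p$-part of $I$ is exactly the $p$-part of its $p$-Sylow subgroup: $(I_{(p)})_p = I_p$, since passing to the $p$-Sylow subgroup and then to the maximal $p$-quotient are the same operation on a finite abelian group up to canonical isomorphism (for the $p$-Sylow subgroup, the maximal $p$-quotient is the group itself). Likewise, because $D$ is assumed cyclic, $D$ decomposes as a direct product of its Sylow subgroups, and $D_p = D_{(p)}$ is the $p$-Sylow subgroup of $D$; moreover the pair $(I_{(p)}, D)$ still lies in $\cS$, because $I_{(p)}$ is again elementary (a $p$-group), non-trivial, and $D/I_{(p)}$ is a quotient of the cyclic group $D$, hence cyclic. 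So $(I_{(p)}, D) \in \cS$ and $\beta((I_{(p)}, D))$ makes sense.

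Next I would compare, term by term, the prime-$p$ part of $\beta((I,D))$ with $\beta((I_{(p)}, D))$. The prime-$p$ part of $\beta((I,D))$ consists of those tuples $(p, H, I_p^*, D_p^*) \in \cT$ with $D \subset H$, $I_p^* = I_p$, $D_p^* = D_p$. On the other hand, $\beta((I_{(p)}, D))$ is a sum over primes $q \mid \# I_{(p)}$; but $I_{(p)}$ is a $p$-group, so the only such prime is $p$ itself, and the conditions become $q = p$, $D \subset H$ (with $\Gamma/H$ cyclic of order prime to $p$), $(I_{(p)})_p = I_p^*$, $D_p = D_p^*$. Since $(I_{(p)})_p = I_p$ and the $p$-part of $D$ in $(I_{(p)}, D)$ is the same $D_p$ (the second coordinate is unchanged), the index sets coincide exactly. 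Summing over all $p \mid \# I$ then assembles the full sum $\beta((I,D))$ from the pieces $\beta((I_{(p)}, D))$, and also accounts for every prime appearing in the original sum, since a nonzero term of $\beta((I,D))$ at prime $p$ requires $I_p$ non-trivial, i.e. $p \mid \# I$.

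The main obstacle — though it is more of a bookkeeping subtlety than a genuine difficulty — is making sure that the hypothesis ``$D$ is cyclic'' is used correctly and is actually needed: it is what guarantees both that $D/I_{(p)}$ stays cyclic (so $(I_{(p)}, D) \in \cS$) and that $D_p$, the maximal $p$-quotient of $D$, can be identified with the $p$-Sylow subgroup $D_{(p)}$, which is what lets the second coordinates match up cleanly across the two sides. One should also double-check the edge case where $D = I$ is itself a $p$-group (so there is only one prime), where the identity is trivially true. I expect the whole argument to be a short unwinding of definitions once these identifications are in place.
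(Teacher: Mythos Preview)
Your proposal is correct and is exactly the direct verification the paper has in mind; the paper's own proof is the single sentence ``This can be checked directly from the definition of $\beta$,'' and you have simply spelled that out. One small overstatement: the identification $D_p = D_{(p)}$ holds for any finite abelian group, not just cyclic ones, so the cyclicity of $D$ is needed only to ensure $D/I_{(p)}$ is cyclic (i.e., that $(I_{(p)}, D) \in \cS$), not for the matching of $D_p$.
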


\begin{proof}
This can be checked directly from the definition of $\beta$.
\end{proof}

\begin{cor}\label{cor:S'}
Define a subset $\cS' \subset \cS$ by
\[
\cS' = \{ (I, D) \in \cS \mid \text{either $D$ is non-cyclic 
  or $\# I$ is a prime-power}\}.
\]
Then we have $\beta(\bN^{\cS'}) = \beta(\bN^{\cS})$.
\end{cor}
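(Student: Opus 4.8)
The plan is to prove $\beta(\bN^{\cS'}) = \beta(\bN^{\cS})$ by showing that every generator $\beta((I, D))$ with $(I,D) \in \cS \setminus \cS'$ can be rewritten as a sum of $\beta$-images of elements of $\cS'$. So fix $(I, D) \in \cS$ with $D$ cyclic and $\# I$ not a prime-power; we must express $\beta((I,D))$ using pairs in $\cS'$. Since $D$ is cyclic, so is $I$, and hence $I = \prod_{p \mid \# I} I_{(p)}$ is the direct product of its Sylow subgroups. The obvious tool is Lemma~\ref{lem:I_dec}, which already gives $\beta((I, D)) = \sum_{p \mid \# I} \beta((I_{(p)}, D))$. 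So I would first invoke that lemma verbatim.

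The remaining point is that each summand $(I_{(p)}, D)$ lies in $\cS'$. Indeed $(I_{(p)}, D) \in \cS$: it is a pair of subgroups with $I_{(p)} \subset I \subset D$, with $I_{(p)}$ non-trivial (as $p \mid \# I$), with $I_{(p)}$ a $p$-group hence elementary, and with $D/I_{(p)}$ cyclic because $D$ is cyclic. Moreover $\# I_{(p)}$ is a prime-power by construction, so $(I_{(p)}, D)$ satisfies the defining condition of $\cS'$. Therefore the right-hand side of Lemma~\ref{lem:I_dec} is a sum of generators indexed by $\cS'$, which shows $\beta((I,D)) \in \beta(\bN^{\cS'})$.

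Finally, one checks the two inclusions. The inclusion $\beta(\bN^{\cS'}) \subset \beta(\bN^{\cS})$ is immediate since $\cS' \subset \cS$. For the reverse inclusion, $\beta(\bN^{\cS})$ is generated as a monoid by the $\beta((I,D))$ for $(I,D) \in \cS$; for $(I,D) \in \cS'$ this is already in $\beta(\bN^{\cS'})$, and for $(I,D) \in \cS \setminus \cS'$ — which by definition of $\cS'$ forces $D$ cyclic and $\# I$ not a prime-power — the previous paragraph expresses it as a sum of generators of $\beta(\bN^{\cS'})$. Hence $\beta(\bN^{\cS}) \subset \beta(\bN^{\cS'})$, and equality follows. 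There is no serious obstacle here: the only thing to be careful about is confirming that $(I_{(p)}, D)$ genuinely meets all four clauses of the definition of $\cS$ (in particular that passing to a Sylow subgroup of $I$ keeps $D/I_{(p)}$ cyclic, which uses that $D$ itself is cyclic), and that the $\cS \setminus \cS'$ case is exactly the case to which Lemma~\ref{lem:I_dec} applies.
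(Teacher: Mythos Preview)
Your proof is correct and follows essentially the same approach as the paper: for $(I,D)\in\cS\setminus\cS'$ one has $D$ cyclic, so Lemma~\ref{lem:I_dec} applies and each $(I_{(p)},D)$ lies in $\cS'$ since $\# I_{(p)}$ is a prime-power. Your write-up is slightly more detailed in verifying the clauses of $\cS$ and the trivial inclusion, but the argument is the same.
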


\begin{proof}
For each $(I, D) \in \cS \setminus \cS'$, we have 
that $D$ is cyclic (and $\# I$ is not a prime-power, but formally this property is unnecessary for now).
For each $p \mid \# I$, defining $I_{(p)}$ as in Lemma \ref{lem:I_dec}, 
we have $(I_{(p)}, D) \in \cS'$ since $\# I_{(p)}$ is a prime-power.
Then Lemma \ref{lem:I_dec} implies that 
$\beta((I, D)) \in \beta(\bN^{\cS'})$.
\end{proof}

According to this corollary, we only have to study the image 
of the homomorphism
\[
\beta' = \beta|_{\bN^{\cS'}}: \bN^{\cS'} \to \bN^{\cT}.
\]
Let us compare the cardinalities of $\cS'$ and $\cT$.

\begin{prop}\label{prop:card_comp}
We have $\# \cS' \geq \# \cT$ and the equality holds if and only 
if  $\Gamma$ is cyclic or $\# \Gamma$ is a prime-power.
\end{prop}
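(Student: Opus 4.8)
The plan is to count both sets by the same bookkeeping data and then compare term by term. Recall $\cS'$ consists of pairs $(I,D)$ with $I\subset D\subset\Gamma$, $I$ nontrivial elementary, $D/I$ cyclic, and either $D$ non-cyclic or $\#I$ a prime power, while $\cT$ consists of tuples $(p,H,I_p^*,D_p^*)$ with $\Gamma/H$ cyclic of order prime to $p$ and $(I_p^*,D_p^*)\in\cS_p$. First I would reduce to the prime-power case of $I$: every element of $\cS'$ has $\#I$ a prime power $p^{e}$ (if $D$ is cyclic this is the extra hypothesis; if $D$ is non-cyclic I will argue separately, see below), so $\cS'$ splits as a disjoint union over primes $p\mid\#\Gamma$ of the subsets $\cS'_p=\{(I,D)\in\cS'\mid \#I \text{ a power of }p\}$. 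Likewise $\cT$ splits as a disjoint union over $p$. So it suffices to prove, for each prime $p\mid\#\Gamma$, the inequality $\#\cS'_p\ge\#\cT_p$ with equality iff $\Gamma_p$ is cyclic or $\Gamma=\Gamma_p$ (i.e.\ $\Gamma$ is a $p$-group), since the global equality $\#\cS'=\#\cT$ then forces $\Gamma$ to be a $p$-group for \emph{every} $p$ dividing its order having more than one choice, which (combined with the cyclic case) gives exactly the stated dichotomy.

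Next I would set up the fibration of $\cS'_p$ over $\cT_p$ through the obvious map $(I,D)\mapsto (p,H,I_p,D_p)$. The target datum needs $H$: here $H$ should be recovered from $(I,D)$ as (a choice of) subgroup with $\Gamma/H$ cyclic of order prime to $p$ and $D\subset H$. The key combinatorial input is: given $(p,I_p^*,D_p^*)\in\cS_p$ and a candidate $H$ with $\Gamma/H$ cyclic of order prime to $p$, the pairs $(I,D)\in\cS'_p$ mapping to $(p,H,I_p^*,D_p^*)$ correspond — once we impose $D\subset H$ and $I_p=I_p^*$, $D_p=D_p^*$ — to lifting $D_p^*\subset\Gamma_p$ back to a subgroup $D\subset H$ whose $p$-part is $D_p^*$ and with $I=$ the (unique) subgroup of $D$ whose image in $\Gamma_p$ is $I_p^*$ and which is a $p$-group; since $I$ is forced to be the $p$-Sylow of $D$ once $\#I$ is a prime power, the fibre is governed by the choices of the prime-to-$p$ part $D^{(p')}$ of $D$ inside $H^{(p')}$, i.e.\ by subgroups of $H^{(p')}\cong\Gamma^{(p')}\cap H$. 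I would make this precise using $\Gamma\cong\Gamma_p\times\Gamma^{(p')}$ and the analogous product decomposition of each $H$.

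The comparison then reduces to the identity: $\#\cT_p=\#\cS_p\cdot(\text{number of }H)$ where $H$ ranges over subgroups with $\Gamma/H$ cyclic of order prime to $p$; while $\#\cS'_p=\sum_{(I_p^*,D_p^*)\in\cS_p}(\text{number of lifts }D)$, and one checks that for \emph{each} fixed $(I_p^*,D_p^*)$ the number of admissible lifts $D\subset\Gamma$ is at least the number of admissible $H$'s, with the role of $H$ played by the largest such subgroup containing a given $D$. The inequality $\#\cS'_p\ge\#\cT_p$ comes from the fact that every $H$ with $\Gamma/H$ cyclic prime-to-$p$ does contain \emph{some} lift $D$ (e.g.\ $D=D_p^*\times(\text{something in }H^{(p')})$), and distinct $H$ force distinct $D$ when we take $D$ maximal in $H$; more carefully, I expect to exhibit an \emph{injection} $\cT_p\hookrightarrow\cS'_p$ by sending $(p,H,I_p^*,D_p^*)$ to $(I,D)$ with $D^{(p')}=H^{(p')}$ and $D_p=D_p^*$, $I$ the $p$-Sylow of $D$. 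Equality of cardinalities then holds iff this injection is a bijection, i.e.\ iff for every $(I,D)\in\cS'_p$ there is a \emph{unique} valid $H$, which happens exactly when either $\Gamma^{(p')}$ is trivial (so $H=\Gamma$ is the only choice and $\#\cS'_p=\#\cS_p=\#\cT_p$, the $p$-group case) or the constraint ``$D$ non-cyclic or $\#I$ a prime power'' cuts $\cS_p$ down to the cyclic situation, which forces $\Gamma_p$ cyclic. Checking that these are the \emph{only} equality cases — in particular handling the non-cyclic-$D$ members of $\cS'$, where $I$ need not be a prime power — is the main obstacle: I would dispose of it by observing that a non-cyclic $D$ can only occur when $\Gamma_p$ itself is non-cyclic (since $D^{(p')}$ is cyclic, being a subgroup of $\Gamma^{(p')}$ which injects into a cyclic group... wait, $\Gamma^{(p')}$ need not be cyclic), so this case must be examined directly and shows that whenever $\Gamma$ is neither cyclic nor a prime-power order group, one of these non-unique-$H$ phenomena genuinely produces a strict inequality. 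The bulk of the remaining work is the routine counting of subgroups $H$ with cyclic prime-to-$p$ quotient against subgroups of $\Gamma^{(p')}$, which I would carry out via the structure theorem for finite abelian groups.
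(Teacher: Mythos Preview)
Your approach has a genuine gap. The proposed injection $\cT_p\hookrightarrow\cS'_p$ sending $(p,H,I_p^*,D_p^*)$ to $(I,D)$ with $D=D_p^*\times H^{(p')}$ and $I=I_p^*$ does \emph{not} land in $\cS$, because $D/I\cong (D_p^*/I_p^*)\times H^{(p')}$ must be cyclic, and $H^{(p')}$ need not be: the hypothesis on $H$ says only that $\Gamma^{(p')}/H^{(p')}$ is cyclic, not $H^{(p')}$ itself. So the map is simply not well-defined. (There is also a minor slip: ``the $p$-Sylow of $D$'' is $D_p^*$, not $I_p^*$.) You could patch this by invoking the self-duality of the subgroup lattice of a finite abelian group, which gives a \emph{bijection} between cyclic subgroups of $\Gamma^{(p')}$ and subgroups with cyclic quotient; but then you no longer have a canonical injection, and your subsequent equality analysis (``unique valid $H$'', ``$D$ maximal in $H$'') loses its footing. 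A second issue: your decomposition $\cS'=\coprod_p\cS'_p$ is not a decomposition of $\cS'$ at all, since it omits exactly the pairs $(I,D)$ with $D$ non-cyclic and $\#I$ not a prime power --- and these are precisely the elements that govern the strict inequality.

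The paper's argument sidesteps all of this. Define $\cS''=\{(I,D)\in\cS:\#I\text{ is a prime power}\}$. Then $\cS''\subset\cS'$ tautologically, and $\#\cS''=\#\cT$: for each $p$, both sides factor as $\#\cS_p$ times a count over $\Gamma^{(p')}$, namely the number of cyclic subgroups on the $\cS''$ side and the number of subgroups with cyclic quotient on the $\cT$ side, and these agree by Pontryagin duality. The proposition thus reduces to deciding when $\cS'\setminus\cS''=\emptyset$, i.e.\ when there is no pair $(I,D)\in\cS$ with $D$ non-cyclic and $\#I$ not a prime power; this visibly happens iff $\Gamma$ is cyclic or $\#\Gamma$ is a prime power. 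Once $\cS''$ is introduced the whole thing is two lines; your fibration machinery is working much harder than necessary and, as it stands, does not close.
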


\begin{proof}
We define
\begin{align}
\cS'' & = \{ (I, D) \in \cS \mid \text{$\# I$ is a prime-power}\}\\
& = \coprod_{p \mid \# \Gamma} \{ (I, D) \in \cS \mid \text{$\# I$ is a $p$-power}\}.
\end{align}
Then it is clear that $\cS' \supset \cS''$.
Moreover, it is easy to see that $\# \cS'' = \# \cT$.
Therefore, we have $\# \cS' \geq \# \cT$.
The equality is equivalent to $\cS'' = \cS'$.
The equality fails if and only if there is $(I, D)$ such that $D$ is non-cyclic 
and $\# I$ is non-prime-power.
Such a pair $(I, D)$ exists if and only if $\Gamma$ is non-cyclic and 
$\# \Gamma$ is non-prime-power.
\end{proof}

\begin{rem}
The authors conjecture that the homomorphism $\beta'' = \beta|_{\bN^{\cS''}}: \bN^{\cS''} \to \bN^{\cT}$ is injective.
This is true when $\Gamma$ is cyclic or $\# \Gamma$ is a prime-power, i.e., when $\cS' = \cS''$ (see Corollary \ref{cor:beta_p} and Proposition \ref{prop:cyc_inj}).
However, we have not proved this for general $\Gamma$.
\end{rem}

\subsection{The case of cyclic groups}\label{ss:cyc}

In this subsection, we prove Theorem \ref{thm:main}(1) for cyclic groups $\Gamma$.
Thanks to Corollary \ref{cor:S'}, it is enough to show the following:

\begin{prop}\label{prop:cyc_inj}
When $\Gamma$ is cyclic, the homomorphism $\beta': \bN^{\cS'} 
\to \bN^{\cT}$ is injective.
\end{prop}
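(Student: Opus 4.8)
When $\Gamma$ is cyclic, the combinatorics simplifies dramatically: every subgroup is cyclic and is determined by its order, and for each prime $p$ the maximal $p$-quotient $\Gamma_p$ is cyclic, so $\cS_p$ is in bijection with pairs of nested cyclic $p$-groups. Moreover, since $\Gamma$ is cyclic, there is no ``$H$-ambiguity'' in an essential way: for a tuple $(p,H,I_p^*,D_p^*) \in \cT$, the condition ``$\Gamma/H$ is cyclic of order prime to $p$'' just says that $H$ contains the $p$-Sylow subgroup $\Gamma_{(p)}$ of $\Gamma$, and then $H$ is determined by the index $[\Gamma:H]$, a divisor of $\#\Gamma/\#\Gamma_{(p)}$. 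The plan is to show that $\beta'$ is injective by exhibiting, for each generator $(I,D)\in\cS'$, a coordinate of $\bN^{\cT}$ on which $\beta'$ behaves triangularly, so that the images $\{\beta'((I,D))\}$ are linearly independent in $\bN^{\cT}$ (hence the monoid homomorphism they generate is injective, as $\bN^{\cS'}$ is free).

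\textbf{Key steps.} First I would recall from Corollary \ref{cor:S'} that it suffices to treat $\cS'$, and note that for cyclic $\Gamma$ we have $\cS' = \cS'' = \{(I,D)\in\cS : \#I \text{ is a prime-power}\}$, since $D$ is automatically cyclic. So a generator is a pair $(I,D)$ with $I$ a nontrivial cyclic $p$-group for a single prime $p=p(I)$, and $I\subset D\subset\Gamma$ with $D$ cyclic. Second, I would analyze $\beta((I,D))$ explicitly: it is the sum of $(p,H,I_p,D_p)$ over all $H$ with $D\subset H$ and $\Gamma/H$ cyclic of order prime to $p$. Here $p=p(I)$ is fixed (it is the unique prime dividing $\#I$), $I_p = I$ (as $I$ is already a $p$-group mapping isomorphically to its image in $\Gamma_p$ since $\Gamma$ is cyclic), and $D_p$ is the $p$-Sylow part of $D$. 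So $\beta((I,D))$ is supported on tuples with first coordinate exactly $p(I)$ and with $(I_p^*,D_p^*) = (I, D_p)$, summed over an interval of subgroups $H\supset D$ of $p'$-index. Third, the injectivity argument: among all $H$ appearing in $\beta((I,D))$, there is a minimal one, namely $H_{\min} = D\cdot\Gamma_{(p)}$ (the smallest subgroup containing both $D$ and the full $p$-Sylow of $\Gamma$, which has $p'$-index). I would order generators $(I,D)$ by, say, decreasing $\#D_{(p')}$ (the prime-to-$p$ part of $\#D$, equivalently increasing $\#H_{\min}$) and show that the coordinate of $\bN^{\cT}$ indexed by $(p(I), H_{\min}(I,D), I, D_{(p)})$ is hit by $\beta((I,D))$ but not by $\beta$ of any strictly-later generator with the same $(p, I_p^*, D_p^*)$ data — because a later generator's $H$'s are all strictly larger. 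Combined with the fact that generators with different $(p,I_p^*,D_p^*)$ land in disjoint coordinate blocks, this gives a triangular/block-triangular structure forcing linear independence of the $\beta'((I,D))$.

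\textbf{Main obstacle.} The delicate point is verifying that the ``leading coordinate'' $(p(I),H_{\min}(I,D),I,D_{(p)})$ of $\beta'((I,D))$ is genuinely not shared by $\beta'$ of any \emph{other} generator — one has to rule out collisions not just from later generators in one chosen order but from all others, which requires checking that the triple of data $(p,I_p^*,D_p^*)$ together with $H_{\min}$ pins down $(I,D)$ uniquely. For cyclic $\Gamma$ this should come down to: $p$ and $I_p^* = I$ recover $I$; $D_p^*$ recovers the $p$-part of $D$; and $H_{\min}$, which equals $D\cdot\Gamma_{(p)}$, recovers the prime-to-$p$ part of $D$ via $[\Gamma:H_{\min}] = [\Gamma_{(p')} : D_{(p')}]$. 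So the real content is the clean identification of $H_{\min}$ and the observation that in the cyclic case the map $(I,D)\mapsto(p,I,D_{(p)},H_{\min})$ is injective on $\cS'$. Once that is in hand, injectivity of the induced monoid map is formal since $\bN^{\cS'}$ is a free monoid and distinct basis elements have images that are $\bN$-linearly independent in $\bN^{\cT}$.
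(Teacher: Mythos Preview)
Your overall approach is the same as the paper's: decompose $\cS'$ and $\cT$ into blocks indexed by $(p, I_p^*, D_p^*)$, observe that within each block $\beta'$ becomes the incidence map $D_{(p')} \mapsto \sum_{H \supset D_{(p')}} [H]$ on the subgroup lattice of the prime-to-$p$ part $\Gamma_{(p')}$, and conclude injectivity by a M\"obius/triangularity argument. The paper packages this last step as a separate lemma on the map $\sum_D a_D[D] \mapsto \sum_H\bigl(\sum_{D\subset H} a_D\bigr)[H]$, proved by recovering $a_D$ by induction on $\#D$.

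However, your specific triangularity claim has the direction reversed. You order by \emph{decreasing} $\#D_{(p')}$ and assert that a later $(I',D')$ (so $\#D'_{(p')} < \#D_{(p')}$) does not hit the leading coordinate $(p, H_{\min}(I,D), I, D_{(p)})$ ``because a later generator's $H$'s are all strictly larger.'' But $(I',D')$ hits that coordinate iff $D' \subset H_{\min}(I,D) = D\cdot\Gamma_{(p)}$, i.e.\ iff $D'_{(p')} \subset D_{(p')}$; in a cyclic group this is equivalent to $\#D'_{(p')} \mid \#D_{(p')}$, which is certainly possible for smaller $\#D'_{(p')}$. (Relatedly, your parenthetical ``equivalently increasing $\#H_{\min}$'' is backwards, since $\#H_{\min} = \#\Gamma_{(p)}\cdot\#D_{(p')}$.) The fix is to order by \emph{increasing} $\#D_{(p')}$: then a later generator has $\#D'_{(p')} > \#D_{(p')}$, hence $D'_{(p')} \not\subset D_{(p')}$, so it misses the leading coordinate of $(I,D)$ and the restricted matrix is genuinely unitriangular. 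Finally, note that the injectivity of $(I,D)\mapsto(p,I,D_{(p)},H_{\min})$ by itself only tells you the diagonal entries sit in distinct columns; you still need the (corrected) triangularity to deduce linear independence of the $\beta'((I,D))$, so that last step is not quite as ``formal'' as your closing sentence suggests.
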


\begin{proof}
As in Proposition \ref{prop:card_comp}, we have
\[
\cS' = \cS'' 
= \coprod_{p \mid \# \Gamma} \{ (I, D) \in \cS \mid \text{$\# I$ is a $p$-power}\}.
\]
By definition, $\cT$ is also decomposed as a disjoint union
\[
\cT = \coprod_p \big( \{ H \subset \Gamma \} \times \cS_p \big),
\]
where $\{ H \subset \Gamma\}$ denotes the set of subgroups such that $\Gamma/H$ is cycic of order prime to $p$.
Also, the homomorphism $\beta$ respects these decompositions.
Therefore, it is enough to check the injectivity for each components.
The proposition follows from the next lemma, applied for the prime-to-$p$-component of $\Gamma$ as $\Delta$.
\end{proof}

\begin{lem}
For a finite abelian group $\Delta$, let
\[
\bigoplus_{D \subset \Delta} \bN \to \bigoplus_{H \subset \Delta} \bN,
\]
where $D$ and $H$ run over all subgroups of $\Delta$, be the homomorphism defined by
\[
\sum_D a_D [D] \mapsto \sum_H \bigg(\sum_{D \subset H} a_D\bigg) [H].
\]
Then this homomorphism is injective.
\end{lem}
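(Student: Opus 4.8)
The plan is to prove this by a triangularity argument on the poset of subgroups of $\Delta$. Order the subgroups of $\Delta$ in a way compatible with inclusion, say by non-decreasing order (so if $D \subsetneq H$ then $D$ comes before $H$); any linear extension of the inclusion order will do. With respect to such an ordering, the matrix of the homomorphism has the form $M_{H,D} = 1$ if $D \subset H$ and $0$ otherwise. In particular $M_{D,D} = 1$ for every subgroup $D$, and $M_{H,D} = 0$ whenever $H$ strictly precedes $D$ in the chosen order, because then $D \not\subset H$. Hence $M$ is lower triangular with $1$'s on the diagonal once the rows and columns are both indexed by the chosen linear order, so $\det M = 1$ over $\Z$, and the map $\bigoplus_D \bN \to \bigoplus_H \bN$, being the restriction of an invertible integer-matrix map $\bigoplus_D \Z \to \bigoplus_H \Z$, is injective.

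Concretely, the argument I would write is: suppose $\sum_D a_D [D]$ and $\sum_D a_D' [D]$ have the same image, i.e. $\sum_{D \subset H} a_D = \sum_{D \subset H} a_D'$ for every subgroup $H \subset \Delta$. Setting $c_D = a_D - a_D' \in \Z$, we must show $c_D = 0$ for all $D$. Proceed by induction on $\#D$ (or, more precisely, on the position of $D$ in the linear order). For the minimal subgroup $D = \{1\}$, the equation for $H = \{1\}$ reads $c_{\{1\}} = 0$. Inductively, fix $D$ and assume $c_{D'} = 0$ for all $D' \subsetneq D$. The equation for $H = D$ reads $\sum_{D' \subset D} c_{D'} = 0$; all terms with $D' \subsetneq D$ vanish by the inductive hypothesis, leaving $c_D = 0$. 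This completes the induction.

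I do not expect any real obstacle here: the only point requiring a moment's care is choosing the right linear order on subgroups so that the induction is well-founded, but ordering by cardinality (breaking ties arbitrarily) immediately works because $D' \subsetneq D$ forces $\#D' < \#D$. The statement is exactly the assertion that the zeta function of the subgroup lattice is invertible (with inverse the Möbius function), so one could alternatively invoke Möbius inversion on the poset of subgroups, but the self-contained triangular induction above is cleaner to record and needs no external machinery.
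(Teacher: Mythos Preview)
Your proof is correct and follows the same approach as the paper: the paper's proof is the one-line remark that $a_D$ can be recovered from the family $\big(\sum_{D \subset H} a_D\big)_H$ by induction on the size of $D$, which is exactly your triangularity/induction argument spelled out in more detail.
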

\begin{proof}
Indeed, we can recover $a_D$ from the family of values 
$\big(\sum_{D \subset H} a_D\big)_H$ by induction on the size of $D$.
\end{proof}

Note that in this lemma, it is important that $H$ runs over all subgroups.
However, in the definition of $\cT$, the quotient group $\Gamma/H$ must be cyclic.
So we need to use the assumption that $\Gamma$ is cyclic again.

Before closing this subsection, it is worth mentioning the cardinality of $\cS$ and $\cT$ when $\Gamma$ is a cyclic group.

\begin{lem}
Suppose
\[
\# \Gamma = p_1^{e_1} \cdots p_s^{e_s},
\]
where $p_1, \dots, p_s$ are distinct primes and $e_i \geq 1$.
Then we have
\[
\# \cS = \prod_{i = 1}^s \frac{1}{2} (e_i + 1)(e_i + 2) 
    - \prod_{i = 1}^s (e_i + 1)
\]
and
\[
\# \cT = \frac{1}{2} \bigg(\sum_{i = 1}^s e_i \bigg) 
   \prod_{i = 1}^s (e_i + 1).
\]
\end{lem}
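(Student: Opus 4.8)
The plan is to treat this as a direct combinatorial count, using that for a cyclic group every subgroup is cyclic and every quotient is cyclic, so the structural hypotheses (``cyclic'', ``elementary'') in the definitions of $\cS$, $\cS_p$ and $\cT$ are automatically satisfied. First I would fix notation: writing $\# \Gamma = p_1^{e_1} \cdots p_s^{e_s}$ and splitting $\Gamma \simeq \prod_{i=1}^s C_i$ with $C_i$ cyclic of order $p_i^{e_i}$ via the Chinese remainder theorem, every subgroup of $\Gamma$ has the form $\prod_i H_i$ with $H_i \subset C_i$, and $H_i$ is determined by its order $p_i^{f_i}$ with $0 \le f_i \le e_i$. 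Thus subgroups of $\Gamma$ are parametrized by tuples $(f_1,\dots,f_s) \in \prod_i \{0,\dots,e_i\}$, there are $\prod_i (e_i+1)$ of them, and containment $I \subset D$ is checked componentwise on exponents.

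For $\# \cS$: an element of $\cS$ is simply a pair of subgroups $I \subset D$ with $I$ non-trivial. Parametrizing $I \leftrightarrow (f_i)_i$ and $D \leftrightarrow (g_i)_i$, this amounts to a tuple of pairs $(f_i,g_i)$ with $0 \le f_i \le g_i \le e_i$, contributing $\prod_i \binom{e_i+2}{2} = \prod_i \tfrac12 (e_i+1)(e_i+2)$ choices, from which one removes the pairs with $I$ trivial (all $f_i = 0$, with $D$ arbitrary), of which there are $\prod_i (e_i+1)$. This yields the first formula.

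For $\# \cT$: by Definition \ref{defn:sets}(3), $\cT$ is the disjoint union over primes $p \mid \# \Gamma$ of $\{H \subset \Gamma \mid \Gamma/H \text{ cyclic of order prime to } p\} \times \cS_p$. Fixing $p = p_j$, I would observe that $\Gamma/H \simeq \prod_i C_i/H_i$ has order prime to $p_j$ precisely when $H_j = C_j$ (the whole $p_j$-Sylow subgroup), while the other components $H_i$ remain free, so there are $\prod_{i \ne j}(e_i+1)$ admissible $H$; and since $\Gamma_{p_j} \simeq C_j$ is cyclic of order $p_j^{e_j}$, the set $\cS_{p_j}$ consists of pairs $(I_p^*,D_p^*)$ of subgroups of $C_j$ with $I_p^*$ non-trivial, i.e.\ exponent pairs $1 \le a \le b \le e_j$, numbering $\binom{e_j+1}{2} = \tfrac12 e_j(e_j+1)$. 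Multiplying these and summing over $j$ gives
\[
\# \cT = \sum_{j=1}^s \tfrac12 e_j(e_j+1) \prod_{i \ne j}(e_i+1)
       = \tfrac12 \Bigl( \sum_{j=1}^s e_j \Bigr) \prod_{i=1}^s (e_i+1),
\]
using $\prod_{i\ne j}(e_i+1) = \prod_i(e_i+1)/(e_j+1)$, which is the second formula.

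I do not anticipate a real obstacle; the only step deserving a moment's care is identifying which subgroups $H$ make $\Gamma/H$ of order prime to $p$ — namely those containing the full $p$-Sylow subgroup of $\Gamma$ — and, correspondingly, that $\cS_p$ only involves the $p$-part $C_j$ of $\Gamma$. Everything else is bookkeeping of divisor exponents.
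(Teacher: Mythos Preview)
Your proof is correct and follows essentially the same approach as the paper: both arguments reduce to counting divisor-exponent tuples for $\#\cS$ via $\{0 \le f_i \le g_i \le e_i\}$ minus the $I$-trivial contribution, and decompose $\#\cT$ as $\sum_j \#\cS_{p_j} \cdot \#\{H : p_j \nmid [\Gamma:H]\}$ with $\#\cS_{p_j} = \tfrac12 e_j(e_j+1)$ and the $H$-count equal to $\prod_{i\ne j}(e_i+1)$. The only addition you give beyond the paper is the explicit final simplification factoring out $\prod_i(e_i+1)$.
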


\begin{eg}
If $e_1 = \cdots = e_s = 1$, then $\# \cS  = 3^s - 2^s$ and 
$\# \cT = s \cdot 2^{s-1}$.
\end{eg}

\begin{proof}
Since $\Gamma$ is cyclic, the set $\cS$ consists of pairs $(I, D)$ 
such that $0 \neq I \subset D \subset \Gamma$.
This corresponds to divisors $1 \neq \# I \mid \# D \mid \# \Gamma$.
The number of such pairs is
\[
\prod_{i = 1}^s \# \{ 0 \leq a \leq b \leq e_i \} 
- \prod_{i = 1}^s \# \{ 0 \leq b \leq e_i \},
\]
which is equal to the claimed formula.

Next we consider $\cT$.
For each $1 \leq i \leq s$, the number of subgroups $H \subset \Gamma$ such that $\Gamma/H$ is cyclic of order prime to $p_i$ is equal to
\[
\prod_{j \neq i} \# \{ 0 \leq c \leq e_j \}
= \prod_{j \neq i} (e_j + 1).
\]
Therefore, we obtain
\[
\# \cT = \sum_{i = 1}^s \bigg( \prod_{j \neq i} (e_j + 1) \bigg) \cdot \# \cS_{p_i}.
\]
We also have
\[
\# \cS_{p_i} = \# \{ 1 \leq a \leq b \leq e_i \}
= \frac{1}{2} e_i (e_i + 1).
\]
By combining these, we obtain the lemma.
\end{proof}

\subsection{The case of non-cyclic groups}\label{ss:non-cyc}

In this subsection we prove Theorem \ref{thm:main}(2).
To show a monoid is non-free, 
we will count the irreducible elements of the monoid as in \cite[\S 5.2]{GK}.

Recall that an irreducible element of a commutative monoid 
is a non-invertible element that cannot be represented as a 
sum of two non-invertible elements.
Then the basis of a free commutative monoid is determined as 
the set of irreducible elements.

\begin{prop}\label{prop:irred}
For $(I, D) \in \cS$, we have $\beta((I, D))$ is an irreducible 
element of $\beta(\bN^{\cS})$ if and only if $(I, D) \in \cS'$.
Also, $\beta$ is injective on $\cS'$.
\end{prop}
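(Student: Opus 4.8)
The proposition has two parts: (i) for $(I,D) \in \cS$, the element $\beta((I,D))$ is irreducible in $\beta(\bN^{\cS})$ if and only if $(I,D) \in \cS'$; and (ii) $\beta$ is injective on $\cS'$. I would treat these together, since the combinatorial bookkeeping is shared.

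First I would recall the explicit formula
\[
\beta((I, D)) = \sum_{\substack{(p, H, I_p^*, D_p^*) \in \cT \\ D \subset H, \ I_p = I_p^*, \ D_p = D_p^*}} (p, H, I_p^*, D_p^*),
\]
and note that, writing $I_{(p)}$ for the $p$-Sylow of $I$, the component of $\beta((I,D))$ indexed by a prime $p$ depends only on $(I_{(p)}, D)$ and is nonzero precisely when $p \mid \# I$. For the ``only if'' direction of (i): if $(I,D) \in \cS \setminus \cS'$, then by definition $D$ is cyclic and $\# I$ is divisible by at least two primes, so Lemma \ref{lem:I_dec} gives $\beta((I,D)) = \sum_{p \mid \# I} \beta((I_{(p)}, D))$ as a sum of at least two nonzero elements of $\beta(\bN^{\cS})$, hence $\beta((I,D))$ is reducible. (One should check each $\beta((I_{(p)}, D))$ is noninvertible, which is automatic since $\bN^{\cT}$ has no nonzero invertible elements.)

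For the ``if'' direction of (i) and for (ii) simultaneously, the key is to pin down, for $(I,D) \in \cS'$, a ``leading term'' of $\beta((I,D))$ that recovers $(I,D)$ and that cannot be split off by a nontrivial decomposition. I would argue as follows. Fix $(I,D) \in \cS'$; by definition either $\# I$ is a $p$-power (so $I = I_{(p)}$ for a single prime $p$) or $D$ is non-cyclic. In the prime-power case the prime $p$ is determined by $I$, and the ``maximal'' coordinate $(p, H, I_p, D_p)$ with $H$ as small as possible — namely $H = D$ when $\Gamma/D$ happens to be cyclic of order prime to $p$, and in general the smallest $H \supseteq D$ with $\Gamma/H$ cyclic of order prime to $p$ — occurs with coefficient $1$ in $\beta((I,D))$ and in no $\beta((I',D'))$ with $(I',D')$ strictly ``larger'' in the partial order by inclusion. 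This is essentially the Möbius-inversion argument of the Lemma following Proposition \ref{prop:cyc_inj}, now carried out prime-by-prime: for each fixed prime $p$ and fixed pair $(I_p^*, D_p^*) \in \cS_p$, the map $\cS' \to \bN^{\{H \subset \Gamma : \Gamma/H \text{ cyclic prime-to-}p\}}$ sending $(I,D)$ with $I_{(p)}$ having $p$-quotient $I_p^*$ and $D_p = D_p^*$ to $\sum_{D \subset H}[H]$ is injective by the same downward induction on $\#H$. Assembling over all $(p, I_p^*, D_p^*)$ and using that $(I,D) \in \cS'$ is determined by its family of $p$-Sylow data $(I_{(p)}, D)$ (here the case split on cyclic vs.\ non-cyclic $D$ matters: when $D$ is non-cyclic we are free to let $\# I$ be a non-prime-power since then $(I,D)$ is still in $\cS'$, and the pair is still reconstructible coordinate-wise), gives injectivity of $\beta$ on $\cS'$, which is (ii). Irreducibility then follows because any nontrivial decomposition $\beta((I,D)) = x + y$ in $\beta(\bN^{\cS})$ would, after pulling back to $\bN^{\cS'}$ via Corollary \ref{cor:S'} and using injectivity on $\cS'$, force one of $x,y$ to be $\beta$ of a proper ``sub-sum'' — but by the leading-coordinate analysis the distinguished coordinate of $\beta((I,D))$ cannot be produced by any $\beta$ of a configuration other than $(I,D)$ itself, so it must lie entirely in $x$ or in $y$, and a dimension/support count then forces the other summand to vanish.

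I expect the main obstacle to be the bookkeeping in the non-prime-power, non-cyclic $D$ case: there $\beta((I,D))$ spreads over several primes simultaneously, and I must verify that the multi-prime ``leading coordinate'' is still uniquely attached to $(I,D)$ within $\cS'$ and cannot be reassembled from a combination of other elements of $\cS'$ — in other words, that the obvious coordinate-wise injectivity survives the interaction between different primes. The clean way to handle this is to observe that $\beta$ respects the decomposition of $\cT$ (and of the relevant index sets) by prime, reducing everything to the single-prime statement already proved in the Lemma after Proposition \ref{prop:cyc_inj}; the only genuinely new point is that membership in $\cS'$ (as opposed to $\cS''$) is exactly what is needed for the reconstruction to go through without the injectivity failure that Proposition \ref{prop:card_comp} flags when $\Gamma$ is non-cyclic with non-prime-power order.
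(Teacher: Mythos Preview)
Your ``only if'' direction is correct and matches the paper. The trouble is with the other direction and with the injectivity claim.

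First, the M\"obius-inversion lemma you invoke (the one following Proposition \ref{prop:cyc_inj}) requires $H$ to range over \emph{all} subgroups of the ambient group; the paper explicitly notes this right after that lemma. In the definition of $\cT$, however, $H$ is constrained to have $\Gamma/H$ cyclic (of order prime to $p$), so the downward induction on $\#H$ does not go through as written. The correct substitute, which the paper proves and uses inside its argument, is that two subgroups $D,D'$ of $\Gamma$ coincide as soon as $\{H:\Gamma/H \text{ cyclic},\ D\subset H\}=\{H:\Gamma/H \text{ cyclic},\ D'\subset H\}$; this holds because every subgroup is the intersection of the subgroups containing it with cyclic quotient. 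This is a different statement and a different proof from the M\"obius inversion you cite.

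Second, and more seriously, your ``distinguished/leading coordinate'' mechanism for irreducibility fails. Take $\Gamma=(\Z/p)^2\times\Z/q$ with distinct odd primes $p,q$, and $(I,D)=(\Z/p\times\Z/q,\Gamma)\in\cS'$ (here $D$ is non-cyclic). One computes
\[
\beta((I,D))=(p,\Gamma,\Z/p,(\Z/p)^2)+(q,\Gamma,\Z/q,\Z/q),
\]
just two coordinates. But $(\Z/p,\Gamma)\in\cS'$ as well, and $\beta((\Z/p,\Gamma))=(p,\Gamma,\Z/p,(\Z/p)^2)$, which is exactly one of those two coordinates. So neither coordinate of $\beta((I,D))$ is ``produced only by $(I,D)$'', and there is no unique minimal $H\supseteq D$ to serve as a leading term (for the prime $q$, every index-$p$ subgroup containing $\Z/q$ is minimal among admissible $H$'s containing smaller $D$'s, and none contains $D=\Gamma$). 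Hence your support/dimension count cannot conclude. Irreducibility in this example holds for a different reason: the complementary coordinate $(q,\Gamma,\Z/q,\Z/q)$ alone is \emph{not} in $\beta(\bN^{\cS})$, because any $(I^*,D^*)$ hitting it would be forced to have $D^*=\Gamma$ and then $p\mid\#I^*$ (to make $D^*/I^*$ cyclic), producing an unwanted $p$-coordinate.

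The paper's argument captures exactly this phenomenon. It takes an arbitrary decomposition $\beta((I,D))=\sum_\lambda\beta((I_\lambda,D_\lambda))$, compares the $H=\Gamma$ coordinates to get $\prim(\#I)=\coprod_\lambda\prim(\#I_\lambda)$, and then uses the lemma above (subgroups with cyclic quotient separate subgroups) to deduce $D_\lambda=D$ for every $\lambda$. With this in hand, the hypothesis $(I,D)\in\cS'$ finishes the job: if $\#I$ is a prime power then the partition of primes forces $|\Lambda|=1$; if $D$ is non-cyclic, pick $p$ with $D_p$ non-cyclic, note that $D_\lambda/I_\lambda$ cyclic forces $p\mid\#I_\lambda$ for every $\lambda$, and again the partition gives $|\Lambda|=1$. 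Injectivity on $\cS'$ falls out of the same analysis. Your proposal is missing this structural step (forcing $D_\lambda=D$ first, then using the $\cS'$ dichotomy), and the leading-coordinate shortcut does not replace it.
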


\begin{proof}
The ``only if'' part follows from the proof of Corollary \ref{cor:S'}.
Let us show the ``if'' part.
Let $(I, D) \in \cS'$ and suppose that
\[
\beta((I, D)) = \sum_{\lambda} \beta((I_{\lambda}, D_{\lambda}))
\]
for a family $\{(I_{\lambda}, D_{\lambda})\}_{\lambda \in \Lambda}$ in $\cS$.
We want to show $\Lambda$ is a singleton and the family 
coincides with $\{(I, D)\}$.

For each prime $p \mid \# \Gamma$, we have
\[
\sum_{H \supset D} (p, H, I_p, D_p)
  = \sum_{\lambda \in \Lambda, p \mid \# I_{\lambda}} 
  \sum_{H \supset D_{\lambda}} (p, H, (I_{\lambda})_p, (D_{\lambda})_p).
\]
In the both sides, $H$ satisfies $\Gamma/H$ is cyclic and $p \nmid [\Gamma: H]$.
Also, the left side should be understood to be zero unless $p \mid \# I$.
This equality can be rephrased as the combination of
\begin{itemize}
\item[(a)]
For any $p \mid \# I$ and $\lambda \in \Lambda$, we have either 
$p \nmid \# I_{\lambda}$ or $(I_p, D_p) = ((I_{\lambda})_p, (D_{\lambda})_p)$.
\item[(b)]
We have
\[
\sum_{H \supset D} (p, H, I_p, D_p)
= \sum_{\lambda \in \Lambda, p \mid \# I_{\lambda}} \sum_{H \supset D_{\lambda}} (p, H, I_p, D_p).
\]
\end{itemize}
By considering the $H = \Gamma$ component, we can divide (b) as
\begin{itemize}
\item[(b1)]
We have $\prim(\# I) = \coprod_{\lambda \in \Lambda} 
\prim(\# I_{\lambda})$, where $\prim(n)$ denotes the set of 
prime divisors of $n$.
In other words, we have $\prim(I_{\lambda}) \subset \prim(I)$ 
for any $\lambda \in \Lambda$ and moreover, for each $p \mid \# I$, 
there exists a unique $\lambda_p \in \Lambda$ such that 
$p \mid \# I_{\lambda_p}$.
\item[(b2)]
For that $\lambda_p$, we have
\[
\sum_{H \supset D} (p, H, I_p, D_p)
= \sum_{H \supset D_{\lambda_p}} (p, H, I_p, D_p).
\]
\end{itemize}

We claim that (b2) is equivalent to $D_{\lambda_p} = D$ (assuming (a)).
To show this, we use the following:

\begin{lem}
Let $\Gamma$ be a finite abelian group.
Let $D, D'$ be two subgroups of $\Gamma$.
Suppose that for any subgroup $H \subset \Gamma$ such that 
$\Gamma/H$ is cyclic, we have $H \supset D$ if and only if $H \supset D'$.
Then we have $D = D'$.
\end{lem}

\begin{proof}
This is because any subgroup of $\Gamma$ can be expressed as the intersection of subgroups $H \subset \Gamma$ with $\Gamma/H$ is cyclic.
\end{proof}

Since we impose an additional condition $p \nmid [\Gamma: H]$, 
we deduce from (b2) only that the prime-to-$p$ component of $D$ 
coincides with that of $D_{\lambda_p}$.
But by combining this with (a), we obtain $D_{\lambda_p} = D$, 
as claimed.

Note that, since any $\lambda$ is represented as $\lambda_p$ for some $p$, 
we obtain $D_{\lambda} = D$ for any $\lambda \in \Lambda$.
As a summary, we have observed:
\begin{itemize}
\item[(b1)]
We have $\prim(\# I) = \coprod_{\lambda \in \Lambda} \prim(\# I_{\lambda})$.
\item[(a)']
For each $p$ and the $\lambda_p \in \Lambda$, we have $(I_{\lambda_p})_p = I_p$.
\item[(b2)']
For any $\lambda \in \Lambda$, we have $D_{\lambda} = D$.
\end{itemize}

Now we use the assumption that $(I, D) \in \cS'$, that is, 
either $D$ is non-cyclic or $\# I$ is a prime-power.
If $\# I$ is a $p$-power, then (b1) implies that $\Lambda$ 
is a singleton and then (a)' and (b2)' imply 
$(I_{\lambda_p}, D_{\lambda_p}) = (I, D)$, as desired.

Suppose $D$ is non-cyclic.
Then there is $p \mid \# D$ such that the $p$-group $D_p$ 
is non-cyclic.
Then (b2)' implies that $(D_{\lambda})_p$ is also non-cyclic 
for any $\lambda$.
Since $D_{\lambda}/I_{\lambda}$ is cyclic, this implies 
that $(I_{\lambda})_p$ is non-zero for any $\lambda$.
By (b1), we must have $\Lambda$ is a singleton.
By (b1), (a)', and (b2)', we have $(I_{\lambda_p}, D_{\lambda_p}) 
= (I, D)$, as desired.

This completes the proof of Proposition \ref{prop:irred}.
\end{proof}

Now we show Theorem \ref{thm:main}(2).
Suppose $\beta(\bN^{\cS})$ is a free monoid.
As a summary of Corollary \ref{cor:S'} and Proposition \ref{prop:irred}, we have a surjective homomorphism
\[
\beta': \bN^{\cS'} \to \beta(\bN^{\cS}),
\]
which yields a bijection from $\cS'$ to the set of irreducible elements of $\beta(\bN^{\cS})$.
Therefore, the rank of $\beta(\bN^{\cS})$ is equal to $\# \cS'$.
On the other hand, since $\beta(\bN^{\cS})$ is a submonoid of $\bN^{\cT}$, 
its rank must be $\leq \# \cT$.
As a consequence, we must have $\# \cS' \leq \# \cT$.
By Proposition \ref{prop:card_comp}, we deduce that 
either $\Gamma$ is cyclic or $\# \Gamma$ is a prime-power.
This completes the proof of Theorem \ref{thm:main}(2).

\section*{Acknowledgments}

We sincerely thank the anonymous referees for providing valuable comments to improve this paper.
The second author is supported by JSPS KAKENHI Grant Number 22K13898.

\bibliographystyle{abbrv}
\bibliography{mybib}

\end{document}